\newcommand{\fhi}{\varphi}
\newcommand{\spann}{\mathrm{span}}
\newcommand{\eeta}{\eps}
\newcommand{\mt}{\mapsto}
\newcommand{\al}{\alpha}
\newcommand{\Sch}{Schr\"odinger}
\newcommand{\diag}{discrete-spectrum}
\newcommand{\ad}{\mathrm{ad}}
\newcommand{\beq}{\begin{equation}} 
\newcommand{\eeq}{\end{equation}}
\newcommand{\be}{\begin{equation}} 
\newcommand{\ee}{\end{equation}}
\newcommand{\bea}{\begin{eqnarray}} 
\newcommand{\eea}{\end{eqnarray}}
\newcommand{\brs}{\begin{eqnarray*}}
\newcommand{\ers}{\end{eqnarray*}}
\newcommand{\ba}{\begin{array}} 
\newcommand{\ea}{\end{array}}
\newcommand{\br}{\begin{eqnarray}}
\newcommand{\er}{\end{eqnarray}}
\newcommand{\Lie}{\mathrm{Lie}}
\newcommand{\lp}{\left(}
\newcommand{\rp}{\right)}
\newcommand{\la}{\left\langle}
\newcommand{\ra}{\right\rangle}
\def\EOP{\ \hfill $\Box$}
\def\EOOP{\ \hfill $\bullet$ }
\renewcommand{\r}[1]{(\ref{#1})}
\def\eps{\varepsilon}
\def\lb{\lambda}
\newcommand{\s}{{\cal S}}
\newcommand{\PPi}{{\overline \Pi}}
\theoremstyle{plain}
\newtheorem{theorem}{Theorem}[section]
\newtheorem{corol}[theorem]{Corollary}
\newtheorem{lem}[theorem]{Lemma}
\newtheorem{claim}[theorem]{Claim}
\newtheorem{prop}[theorem]{Proposition}
\theoremstyle{definition}
\newtheorem{defn}[theorem]{Definition}
\theoremstyle{remark}
\newtheorem{rem}[theorem]{Remark}
\numberwithin{equation}{section}
\renewcommand{\H}{\mathscr{H}}
\newcommand{\N}{{\mathbf N}}
\newcommand{\C}{{\mathbf{C}}}
\newcommand{\R}{{\mathbf{R}}}
\newcommand{\Q}{{\mathbf{Q}}}
\newcommand{\Z}{{\mathbf Z}}
\newcommand{\F}{{\mathbf F}}
\begin{document}

\title{Controllability of the discrete-spectrum  \Sch\ equation driven
by an external field}
\date{\today}
\maketitle

\medskip
\centerline{\scshape Thomas Chambrion}
\medskip
{\footnotesize
 \centerline{Institut \'Elie Cartan, UMR 7502 Nancy-Universit\'e/CNRS/INRIA, BP 239, 54506 Vand\oe uvre-l\`es-Nancy,
France}
\centerline{{\tt Thomas.Chambrion@iecn.u-nancy.fr}}}

\medskip
\centerline{\scshape Paolo Mason}
\medskip
{\footnotesize

\centerline{IAC, CNR, Viale Del Policlinico, 137
00161 Rome, Italy  and}
 \centerline{Institut \'Elie Cartan, UMR 7502 INRIA/Nancy-Universit\'e/CNRS, BP 239, 54506 Vand\oe uvre-l\`es-Nancy,
France}\centerline{{\tt p.mason@iac.cnr.it}}
}

\medskip
\centerline{\scshape Mario Sigalotti}
\medskip
{\footnotesize
 \centerline{Institut \'Elie Cartan, UMR 7502 INRIA/Nancy-Universit\'e/CNRS, BP 239, 54506 Vand\oe uvre-l\`es-Nancy,
France}\centerline{{\tt mario.sigalotti@inria.fr}}}

\medskip
\centerline{\scshape Ugo Boscain\footnote{The last author was partially supported by a FABER grant by {\it Conseil r\'egional de Bourgogne}}}
\medskip
{\footnotesize
\centerline{Le2i, CNRS, 
Universit\'e de Bourgogne, BP 47870, 21078
Dijon Cedex, France}
\centerline{{\tt ugo.boscain@u-bourgogne.fr}}}

\begin{quotation}
{\bf Abstract} 
We prove approximate controllability of the bilinear Schr\"odinger equation in the case in which the uncontrolled Hamiltonian has discrete  non-resonant spectrum. The results that are obtained apply both to bounded or unbounded domains and to the case in which the control potential is bounded or   
unbounded. The method relies on finite-dimensional techniques applied to the Galerkin approximations and permits, in addition,  to get some controllability properties for the density matrix. Two examples are presented: the harmonic oscillator and the 3D well of potential, both controlled by suitable potentials.
\end{quotation}

\medskip 

\begin{quotation}
{\bf R\'esum\'e} 
Nous montrons la contr\^olabilit\'e approch\'ee de  l'\'equation de Schr\"odinger bilin\'eaire dans le cas o\`u l'hamiltonien non contr\^ol\'e a un spectre discret et non-r\'esonnant. Les r\'esultats obtenus sont valables que le domaine soit born\'e ou non, et que le potentiel de contr\^ole soit born\'e ou non. La preuve repose sur des m\'ethodes de dimension finie appliqu\'ees aux approximations de Galerkyn du syst\`eme. Ces m\'ethodes permettent en plus d'obtenir des r\'esultats de contr\^olabilit\'e des matrices de densit\'e. Deux exemples sont pr\'esent\'es, l'oscillateur harmonique et le puit de potentiel en dimension trois, munis de potentiels de contr\^ole ad\'equats.  
\end{quotation}

\section{Introduction}

In this paper we study the controllability of the bilinear Schr\"odinger equation. Its importance is due to applications to modern technologies such as  Nuclear Magnetic Resonance, laser spectroscopy, and quantum information science (see for instance \cite{glaser,peirs,science,shapiro}).

Many controllability results are available when the state space is finite dimensional, e.g.,  for spin systems
or for molecular dynamics  when one neglects interactions with highly excited levels (see for instance \cite{altafini1,dalessandro-book}). When the state space is infinite-dimensional the controllability problem appears to be  much more intricate.
Some results are available when the control is the value of the wave function on some portion of the boundary or in 
some internal region of the domain  
(see \cite{Review_zuazua} and references therein and the recent paper \cite{TeTuc}).   

However, from the point of view of applications the case in which  the control appears in the Hamiltonian as an external field  is much more interesting, since  the wave function is not directly accessible in experiments  and because of the postulate of collapse of the wave function. For instance, in nuclear magnetic resonance  the control is a magnetic field, in laser spectroscopy and in many applications of photochemistry the control is a laser or a source of light.

In this paper we consider the controllability problem  for the following bilinear system representing
the Schr\"odinger equation  driven by one external field 
\begin{eqnarray}
i\frac{d\psi}{dt}(t)=(H_0+u(t) H_1)\psi(t).
\label{eqeq}
\end{eqnarray}
Here the wave function $\psi$ evolves in an infinite-dimensional Hilbert space, $H_0$  is a self-adjoint operator called  {\it drift Hamiltonian} (i.e. the Hamiltonian responsible for  the evolution when the external field is not active),   $u(t)$ is a scalar control function, and $H_1$ is a self-adjoint operator describing the interrelation between the system and the external field.   

The reference case is the one in which the Hilbert space is $L^2(\Omega)$  where $\Omega$ is either $\R^d$ or a  bounded domain of $\R^d$, 
and equation (\ref{eqeq}) reads
\begin{eqnarray}
i \frac{\partial \psi}{\partial t}(t,x)=\left( -\Delta  +V(x)+ u(t) W(x)\right)\psi(t,x),
\label{eqeq2}
\end{eqnarray}
where $\Delta$ is the Laplacian (with Dirichlet boundary condition in the case in which $\Omega$ is bounded) and $V$ and $W$ are  suitably regular functions defined on $\Omega$.
However the setting of the paper  covers more general cases (for instance 
 $\Omega$ can be a Riemannian manifold and $\Delta$  the corresponding Laplace-Beltrami operator). Let us stress that the proposed approach allows to handle both cases
where the control potential (i.e. $H_1$ in \r{eqeq} or $W$ in \r{eqeq2}) is
bounded or  unbounded. Notice that in many situations the control
potential happens to be unbounded. For instance
if $\Omega=\R^d$ and
 the  controlled 
 external force depends on time, but is constant in space, then
$W$ is linear and hence unbounded.

Besides the fact that one cannot expect exact controllability on the whole Hilbert sphere 
(see \cite{bms, turinici}) and some negative result (in particular \cite{mira_rouch,roucho2})
only few approximate controllability results are available 
and concern mainly special situations.
It should be mentioned, however, that several results 
on efficient steering of the Schr\"odinger equation without any 
controllability assumptions are available, e.g.~\cite{bkp,borzi,ito}. 
(For optimal control results for finite dimensional 
quantum systems see, for instance, \cite{boscain4,boscain3,q5,brockett}.)

In \cite{Beauchard1,beauchard-coron}
   Beauchard and Coron study  the controllability of a quantum particle in a 1D potential well with $W(x)=x$. 
Their results are highly nontrivial and are based on Coron's return method (see \cite{coron-libro}) and  Nash--Moser's theorem.
In particular, they  prove that the system is exactly controllable in the unit sphere of the Sobolev space $H^7$ (implying in particular  approximate controllability  in $L^2$). One of the most interesting corollaries of this result  
is  exact controllability between eigenstates.

A different result is given in \cite{Boscain_Adami}, where  
adiabatic methods are used to 
prove approximate controllability for systems having conical eigenvalue crossings in the space of controls.

Another controllability result has been proved by  Mirrahimi in \cite{mirra-solo} using Strichartz estimates and concerns 
approximate controllability for a certain class of systems such that $\Omega=\R^d$ and
whose drift Hamiltonian has mixed spectrum (discrete and continuous).

The aim of 
the present paper is to prove a general approximate controllability 
result  for a large class of systems for which the drift Hamiltonian  
 $H_0$ has discrete spectrum.
Our main assumptions  are
 that
the spectrum of  $H_0$  
 satisfies a non-resonance condition and that 
 $H_1$ couples each pair of distinct eigenstates of $H_0$.
Such assumptions happen to be generic in a suitable sense, as it will be discussed in a forthcoming paper.

We then apply the approximate controllability  result to two classical examples, namely the harmonic oscillator
 and the 3D potential well,  for suitable controlled potentials.

Our method is new in the framework of quantum control and relies on finite-dimensional techniques applied to the Galerkin 
approximations. 
A difficult point is to deduce properties of the original infinite-dimensional system from its finite-dimensional approximations. For 
the Navier--Stokes  equations this program 
 was successfully conducted by Agrachev and Sarychev in the seminal paper \cite{Navier-Stokes} (see also \cite{shirikyan,rodrigues}). 

A key ingredient of the proof is a  time reparametrization that inverts the roles of  $H_0$ and $H_1$ as drift and control operator.  This operation is crucial since it permits to exploit for the Galerkin approximation the techniques developed  in  
\cite{agrachev_chambrion} for finite-dimensional systems on compact semisimple Lie groups. The passage from the controllability properties of the Galerkin approximations to those of the infinite-dimensional system heavily relies on the fact that the dynamics preserve the Hilbert sphere.

A feature of our method  is that the infinite-dimensional system inherits, in a suitable sense,  
controllability results for  the group of unitary 
transformations from those of the Galerkin approximations.
This permits to extract controllability properties for the {\it density matrix}.  
Let us stress that, as it happens in finite dimension, controllability properties for the 
density matrix cannot in general be deduced from those of the wave function (see for instance \cite{albertini}).

The paper is organized as follows. In Section \ref{s-math-f} we present the general functional analysis setting and we state our main result (Theorem~\ref{main}) for the control system  \r{eqeq}. In Section  \ref{bible} we show how this result applies to the Schr\"odinger equation \r{eqeq2} when  $\Omega$ is both bounded or unbounded. Section \ref{scheme} contains the proof of Theorem~\ref{main} and an estimate of the minimum time for approximately steering the system between two given states, that holds even if the system itself is not approximately controllable. In Section \ref{s-density} we extend  Theorem~\ref{main} to the controlled evolution of the density matrix (Theorem~\ref{densities}). Finally in Section \ref{examples} we show how Theorem~\ref{main} and Theorem~\ref{densities}
can be applied to specific cases.  
In particular, we show how to get controllability results  even in cases in which  $V$ 
does not satisfy the required
non-resonance hypothesis, using perturbation arguments.

\section{Mathematical framework and statement of the main result}
\label{s-math-f}
Hereafter $\N$ denotes the set of strictly positive integers. 
Definition~\ref{CDS} below provides the abstract mathematical framework 
that will be used to formulate and prove 
the controllability results  later applied to 
the \Sch\ equation \r{eqeq2}.
The hypotheses under which  \r{eqeq2} fits the abstract framework are  
discussed in Section~\ref{bible}. 

\begin{defn}\label{CDS}
Let $\H$ be a complex Hilbert space and $U$ be a subset of $\R$.
Let $A,B$ be two, possibly unbounded, operators on $\H$ with values in $\H$ and denote by $D(A)$ and $D(B)$ their domains. 
The control system $(A,B,U)$
is the formal controlled equation 
\begin{equation} \label{main_EQ}
\frac{d \psi}{dt}(t)=A \psi(t) +u(t) B\psi(t),\ \ \ \  \ \ u(t)\in U.
\end{equation}
We say that $(A,B,U)$ is a skew-adjoint
\diag\ 
 control system 
if the following  conditions are satisfied: 
(H1) $A$ and $B$ are 
skew-adjoint, 
(H2) there exists an orthonormal basis  $(\phi_n)_{n\in\N}$ of $\H$ made of
eigenvectors of $A$, (H3) $\phi_n\in D(B)$ for every $n\in\N$.
\end{defn}

In order to give a meaning to the evolution equation \r{main_EQ}, at least when $u$ is constant,
we should ensure that the sum $A+u B$ is well defined.
The standard notion of sum of operators seen as quadratic forms (see \cite{davies}) is not 
always applicable under the sole hypotheses (H1), (H2), (H3).
An adapted definition of $A+u B$ 
can nevertheless be given as follows: 
hypothesis (H3) guarantees that the sum  $A+u B$ is well defined on $V=\spann\{\phi_n\mid n\in\N\}$.  
Any skew-Hermitian operator 
$C:V\to \H$ 
admits a unique skew-adjoint extension ${\mathcal E}(C)$. 
We identify $A+ u B$ with ${\mathcal E}(A|_{V}+u B|_V)$. 

Let us notice that when $A+ u B$ is well defined as 
sum of quadratic forms and is skew-adjoint 
then the two definitions of sum coincide. This happens in particular for the \Sch\ equation \r{eqeq2}
in most physically significant situations (see Section~\ref{bible}).

 A crucial consequence of what precedes 
is that
 for every $u\in U$
 the 
skew-adjoint operator $A+uB$ generates 
a group of unitary transformations
$e^{t(A+u B)}:\H\to \H$.
In particular, the unit sphere $\s$  of $\H$ satisfies $e^{t(A+u B)}(\s)=\s$ for every $u\in U$ and every $t\geq 0$.

Due to the dependence of the domain $D(A+uB)$ on $u$, 
the solutions of \r{main_EQ}  cannot in general be defined in classical (strong, mild or weak) sense.
Let us mention that, in some relevant cases in which the spectrum of $A$ has a nontrivial continuous component the solution can be defined as in \cite{pierfi,rodni} by means of Strichartz estimates.

We will say that the {\it solution of \r{main_EQ}} with initial condition $\psi_0\in\H$
 and corresponding to the 
  piecewise constant control $u:[0,T]\to U$ is the 
  curve $t\mapsto \psi(t)$
 defined by  
\be\label{solu}
\psi(t)=e^{(t-\sum_{l=1}^{j-1} t_l)(A +u_j B)}\circ e^{t_{j-1}(A +u_{j-1} B)}\circ \cdots \circ e^{t_1(A+u_1  B)}(\psi_0),
\ee
where $\sum_{l=1}^{j-1} t_l\leq t<\sum_{l=1}^{j} t_l$ and 
$u(\tau)=u_j$ if $\sum_{l=1}^{j-1} t_l\leq \tau<\sum_{l=1}^{j} t_l$.
Notice that such a $\psi(\cdot)$  satisfies, for every $n\in \N$ and almost every $t\in [0,T]$,
 the differential equation
\be\label{very_weak}
\frac d{d t}\la \psi(t),\phi_n\ra=-\la \psi(t),(A+u(t)B)\phi_n\ra.
\ee

\begin{rem}\label{rem}
The notion of solution introduced above 
makes sense in very degenerate situations 
and can be enhanced  when $B$ is bounded. Indeed, 
well-known results assert that in this case
if $u\in L^1([0,T],U)$ then  
there exists a unique weak (and mild) solution $\psi\in \mathcal{C}([0,T],\H)$ 
which coincides with the curve \r{solu} when $u$ is piecewise constant. 
Moreover, if $\psi_0\in D(A)$ and $u\in \mathcal{C}^1([0,T],U)$ then 
$\psi$ is differentiable and it is a strong solution of \r{main_EQ}.
(See \cite{bms} and references therein.)
\end{rem}

\begin{defn}\label{controllability}
Let $(A,B,U)$ be a skew-adjoint  \diag\  control system. We say that $(A,B,U)$ is approximately controllable
if for every $\psi_0,\psi_1\in \s$ and every $\eps>0$ there exist $k\in \N$, $t_1,\dots,t_k>0$ and $u_1,\dots,u_k\in U$ such that 
$$\|\psi_1-e^{t_k(A+u_k B)}\circ \cdots \circ e^{t_1(A+u_1 B)}(\psi_0)\|<\eps.$$
\end{defn}

Let, for every $n\in\N$, $i \lb_n$ denote the eigenvalue of $A$ corresponding to $\phi_n$ ($\lb_n\in\R$).
The main result of the paper is 
the following. 

\begin{theorem}\label{main}
Let $\delta>0$ and 
$(A,B,(0,\delta))$ be a skew-adjoint  \diag\  control system. If the elements of the sequence
$(\lb_{n+1}-\lb_{n})_{n\in\N}$ are $\Q$-linearly independent
and if 
$\la B \phi_n, \phi_{n+1}\ra\ne 0$ for every $n\in \N$,
then $(A,B,(0,\delta))$ is approximately controllable. 
\end{theorem}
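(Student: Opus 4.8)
The strategy is to reduce the infinite-dimensional controllability problem to the controllability of finite-dimensional Galerkin truncations, for which one may invoke known results on control of finite-dimensional systems on compact Lie groups. The first step is the \emph{time-reparametrization trick} mentioned in the introduction: along any solution corresponding to a \emph{constant} control $u>0$, multiplying time by a constant is the same as dividing $A+uB$ by a constant; writing $e^{t(A+uB)}=e^{(t/u)(uA/u^2 \cdot u + B)}$ — more precisely, rescaling so that the control multiplies $A$ instead of $B$ — one passes to a system in which $B$ plays the role of drift and $A$ the role of control, at the cost of a global phase and a reparametrization of time. Since the eigenvalues $i\lambda_n$ of $A$ are purely imaginary, $e^{\tau A}=\mathrm{diag}(e^{i\tau\lambda_n})$ acts diagonally on the basis $(\phi_n)$, and the $\Q$-linear independence of the gaps $\lambda_{n+1}-\lambda_n$ will let us, via Weyl's equidistribution theorem, realize essentially arbitrary diagonal unitary phases $\mathrm{diag}(e^{i\theta_n})$ as limits of $e^{\tau A}$ for suitable $\tau>0$.

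Next I would fix $N\in\N$ and consider the Galerkin projection onto $\H_N=\spann\{\phi_1,\dots,\phi_N\}$: the truncated operators $A^{(N)}=\pi_N A|_{\H_N}$ and $B^{(N)}=\pi_N B|_{\H_N}$, which are skew-Hermitian $N\times N$ matrices. The hypothesis $\langle B\phi_n,\phi_{n+1}\rangle\ne 0$ guarantees that $B^{(N)}$ has a nonzero entry linking each consecutive pair, so the "chain" $\phi_1-\phi_2-\cdots-\phi_N$ is connected; combined with the ability to conjugate $B^{(N)}$ by the diagonal phases $e^{\tau A^{(N)}}$ coming from the previous paragraph, one generates — following the bracket-generating argument of \cite{agrachev_chambrion} — the full Lie algebra $\mathfrak{su}(\H_N)$ (or $\mathfrak{u}(\H_N)$), so that the truncated control system is controllable on $SU(\H_N)$, uniformly in the sense that the reachable set from the identity is dense in $SU(\H_N)$. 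The key finite-dimensional input is thus: non-resonance of the spectrum plus connectedness of the coupling graph $\Rightarrow$ approximate controllability of each Galerkin truncation on the unitary group.

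The final and most delicate step is to transfer this back to $\H$. Given $\psi_0,\psi_1\in\s$ and $\eps>0$, I would choose $N$ large enough that $\psi_0$ and $\psi_1$ are $\eps/3$-close to their projections on $\H_N$ (after normalizing). One then wants to apply, to the initial condition, a sequence of flows $e^{t_j(A+u_jB)}$ whose net effect on $\H_N$ approximates a unitary carrying $\pi_N\psi_0/\|\pi_N\psi_0\|$ to $\pi_N\psi_1/\|\pi_N\psi_1\|$. The obstacle — and I expect this to be the genuinely hard part — is that the flows of the true system do \emph{not} leave $\H_N$ invariant: each application of $e^{t_j(A+u_jB)}$ leaks mass into the complement $\H_N^\perp$, and one must control this leakage. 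The way to do it is a uniform estimate showing that over the finite, $N$-dependent but $\eps$-independent, total time and number of switches needed, the escape of mass out of a larger truncation $\H_{\tilde N}$ (with $\tilde N\gg N$) can be made arbitrarily small — exploiting that the $e^{t(A+uB)}$ are unitary (so mass is globally conserved and one only needs to bound the off-diagonal flow) together with the explicit structure of the generators on the basis $(\phi_n)$. This is precisely where the preservation of the Hilbert sphere $\s$ by the dynamics, emphasized in the introduction, is indispensable: it upgrades a weak/pointwise convergence of the Galerkin dynamics to genuine norm-approximation on $\s$. Assembling the three $\eps/3$ errors — projection of $\psi_0$, projection of $\psi_1$, and the Galerkin-dynamics approximation including mass leakage — yields the claimed approximate controllability.
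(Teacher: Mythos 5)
Your outline correctly captures the global architecture of the paper's proof --- the time-reparametrization inverting the roles of $A$ and $B$, the controllability of the Galerkin truncations via iterated brackets $\ad^p_{A^{(n)}}(B^{(n)})$ generating $\mathfrak{su}(n)$ (this is exactly Proposition~\ref{k-lab}), and the fact that norm-preservation is what rescues the infinite-dimensional limit. You also correctly identify the genuine difficulty: the flows of the full system do not preserve the Galerkin subspaces. But the mechanism you propose for handling this --- enlarge the truncation to $\H_{\tilde N}$ and show that mass escaping past $\tilde N$ is small over the (finite) time horizon --- does not close the argument, and it is not what the paper does.

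Here is why your step fails as stated. Take $n$ so that $\Pi_n\psi_0$, $\Pi_n\psi_1$ are $\eps$-close to $\psi_0,\psi_1$ and let $u$ be a control making $(\Sigma_n)$ go from $\PPi_n\psi_0$ to $\PPi_n\psi_1$ (up to normalization). When you run the \emph{full} system with this $u$, the first $n$ coordinates are not only coupled to coordinates beyond $N$ (which is indeed small by square-summability of each row of $B$, and is the only thing $\tilde N$-enlargement controls); they are strongly coupled to coordinates $n+1,\dots,N$, and those couplings $\la B\phi_j,\phi_k\ra$ with $j\le n<k\le N$ are \emph{not} small and cannot be made small by enlarging any truncation. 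With the unmodified control $u$ the first $n$ coordinates of the infinite-dimensional solution simply do not track the $(\Sigma_n)$ trajectory. The missing idea is the paper's Claim~\ref{roba}: one does not use $u$ itself but an approximating sequence $u_k$ of highly oscillatory piecewise-constant controls, chosen (again via the $\Q$-linear independence of the gaps, i.e.\ equidistribution on a torus --- so the non-resonance hypothesis is used a \emph{second} time, not merely in the Lie-bracket generation) so that the conjugated matrix $e^{-v_k(t)A^{(N)}}B^{(N)}e^{v_k(t)A^{(N)}}$ converges in an integral sense to a \emph{block-diagonal} matrix $M(t)$ that decouples coordinates $1,\dots,n$ from coordinates $n+1,\dots,N$, while reproducing the $(\Sigma_n)$ dynamics on the first block. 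Only after this active decoupling does the ``leakage to $>N$ is small'' estimate (the $\mu$-bound $\sum_{k>N}|b_{jk}|^2<\mu$ for $j\le n$) suffice, because the resolvent of $\dot Y=M(t)Y$ then preserves the norm of the first $n$ coordinates and the perturbation it has to absorb is $O(\sqrt{n\mu})$ in those coordinates. A passive escape estimate cannot replace this averaging argument.

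You also omit a necessary final ingredient: the construction above only controls the \emph{moduli} of the first $n$ coordinates of $\psi^k(T)$ up to the residual free evolution $e^{(v(T)-v_k(T))A}$. One must still correct the phases, which the paper does (Proposition~\ref{phase}) via Poincar\'e recurrence for the translation flow on a torus, applied to the phases $\lb_1,\dots,\lb_n$ over an arbitrarily short additional time interval. Without that step the approximation argument delivers only $|\la\psi^k(T),\phi_j\ra|\approx|\la\psi_1,\phi_j\ra|$, not $\la\psi^k(T),\phi_j\ra\approx\la\psi_1,\phi_j\ra$.
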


Recall that the elements of the sequence
$(\lb_{n+1}-\lb_{n})_{n\in\N}$ are said to be $\Q$-linearly independent 
if for every $N\in \N$ and $(q_1,\dots,q_N)\in\Q^N\smallsetminus\{0\}$ 
one has $\sum_{n=1}^N q_n (\lb_{n+1}-\lb_{n})\ne 0$.

The condition $\la B \phi_n, \phi_{n+1}\ra\ne 0$, preferred 
here for the easiness of its expression, can be replaced by a weaker one (namely, \r{connect}), as detailed in Remark~\ref{connectedness}.
\section{Discrete-spectrum \Sch\ operators}\label{bible}

The aim of this section is to recall some classical results on \Sch\ operators. 
 In particular we list here, among the numerous situations 
studied in the literature,  
some well-known sufficient conditions 
guaranteeing that 
the controlled \Sch\ equation \r{eqeq2}
satisfies the assumptions of 
Definition~\ref{CDS}. 

\begin{theorem}
[{\cite[Theorem 1.2.2]{henrot}}]
\label{the-bounded}
Let $\Omega$ be an open and bounded subset of $\R^d$ and  $V\in L^\infty(\Omega,\R)$. 
Then $-\Delta+V$, with Dirichlet boundary conditions, 
is a self-adjoint  operator with compact resolvent.
In particular  $-\Delta+V$ 
has discrete  spectrum and 
admits a family of eigenfunctions 
in  $H^2(\Omega,\R)\cap H^1_0(\Omega,\R)$ 
which forms an orthonormal basis of 
$L^2(\Omega,\C)$. 
\end{theorem}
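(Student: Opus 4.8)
The plan is to construct $-\Delta+V$ as the self-adjoint operator canonically associated with a quadratic form, and then to read off discreteness of the spectrum and the existence of an eigenbasis from the compactness of the Sobolev embedding $H^1_0(\Omega)\hookrightarrow L^2(\Omega)$.

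First, on $H^1_0(\Omega,\C)$ I would consider the Hermitian sesquilinear form $q(u,v)=\int_\Omega \nabla u\cdot\overline{\nabla v}\,dx+\int_\Omega V\,u\,\bar v\,dx$. Since $V\in L^\infty(\Omega,\R)$, the form $q$ is continuous on $H^1_0(\Omega)$ and bounded below by $-\|V\|_{L^\infty}\|u\|_{L^2}^2$; moreover, as $\Omega$ is bounded the Poincar\'e inequality shows that $q(u,u)+c\|u\|_{L^2}^2$ is equivalent to $\|u\|_{H^1_0}^2$ for $c>\|V\|_{L^\infty}$, so $q$ is a closed, densely defined, lower-bounded Hermitian form. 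The first representation theorem for such forms produces a unique self-adjoint, lower-bounded operator $A$ with $\langle Au,v\rangle=q(u,v)$ for $u\in D(A)\subset H^1_0(\Omega)$ and $v\in H^1_0(\Omega)$, and one checks that $Au=-\Delta u+Vu$ holds in the sense of distributions, the Dirichlet condition being encoded in the choice of form domain $H^1_0(\Omega)$. This settles the self-adjointness of $-\Delta+V$.

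Second, fix $c>\|V\|_{L^\infty}$ as above so that $A+c$ is boundedly invertible and, by coercivity (Lax--Milgram), $(A+c)^{-1}$ maps $L^2(\Omega)$ continuously into $H^1_0(\Omega)$. Composing with the inclusion $H^1_0(\Omega)\hookrightarrow L^2(\Omega)$, which is compact by the Rellich--Kondrachov theorem precisely because $\Omega$ is bounded, shows that $(A+c)^{-1}$ is a compact self-adjoint operator on $L^2(\Omega)$. The spectral theorem for compact self-adjoint operators then provides an orthonormal basis $(\phi_n)_{n\in\N}$ of $L^2(\Omega,\C)$ made of eigenvectors of $(A+c)^{-1}$ with eigenvalues $\mu_n\to 0$; these are exactly the eigenvectors of $A$, with eigenvalues $\lambda_n=\mu_n^{-1}-c\to+\infty$, each isolated and of finite multiplicity, so $A$ has discrete spectrum. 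Since $V$ is real-valued the form $q$ preserves real functions, hence so does $A$, and the $\phi_n$ may be taken in $H^1_0(\Omega,\R)$.

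Finally, to upgrade $\phi_n\in H^1_0(\Omega)$ to $\phi_n\in H^2(\Omega)\cap H^1_0(\Omega)$ one notes that each $\phi_n$ solves $-\Delta\phi_n=(\lambda_n-V)\phi_n\in L^2(\Omega)$, so interior elliptic regularity gives $\phi_n\in H^2_{\mathrm{loc}}(\Omega)$; the $H^2$-estimate up to the boundary --- equivalently the identity $D(A)=H^2(\Omega)\cap H^1_0(\Omega)$ --- is the one step that goes beyond soft functional analysis and requires extra regularity of $\partial\Omega$ (for instance a $C^2$ boundary, or $\Omega$ convex), as the example of a domain with a reentrant corner already shows. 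I expect this boundary-regularity point to be the main obstacle; under a mild smoothness assumption on $\Omega$ it is classical (Agmon--Douglis--Nirenberg, or Grisvard for polygonal/convex domains), and it is met in the situations relevant to the present paper. A shortcut available once one grants $D(-\Delta)=H^2(\Omega)\cap H^1_0(\Omega)$ is instead to observe that $V$ acts as a bounded Hermitian perturbation and to invoke the Kato--Rellich theorem together with the compactness of the embedding $H^2(\Omega)\hookrightarrow L^2(\Omega)$.
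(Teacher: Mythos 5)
The paper does not prove this statement; it simply cites \cite[Theorem 1.2.2]{henrot}, so there is no in-text proof to compare against. Your argument is the standard one for a result of this type: associate to $q(u,v)=\int\nabla u\cdot\overline{\nabla v}+\int V u\bar v$ on the form domain $H^1_0(\Omega)$ a lower-bounded self-adjoint operator via the first representation theorem, then deduce compactness of $(A+c)^{-1}$ from Lax--Milgram coercivity plus the Rellich--Kondrachov embedding $H^1_0(\Omega)\hookrightarrow L^2(\Omega)$, and invoke the spectral theorem for compact self-adjoint operators. This correctly establishes self-adjointness, compact resolvent, discrete spectrum, and the existence of a real orthonormal eigenbasis lying in $H^1_0(\Omega,\R)$, and it is almost certainly the same route as the cited reference.

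You are right to single out the $H^2(\Omega)$ membership of the eigenfunctions as the delicate point, and your caveat is not merely cautious --- it is genuinely necessary. For an arbitrary bounded open $\Omega$ (say, a non-convex polygon with a reentrant corner), the domain of the Dirichlet Laplacian is $\{u\in H^1_0(\Omega):\Delta u\in L^2(\Omega)\}$, which strictly contains $H^2(\Omega)\cap H^1_0(\Omega)$, and eigenfunctions do in general fail to be in $H^2$ up to the boundary. Thus, as you say, the $H^2$ conclusion requires some boundary regularity (convexity, or $\partial\Omega\in C^{1,1}$, etc.), which the statement as quoted does not make explicit; what holds without any assumption on $\partial\Omega$ is only $\phi_n\in H^2_{\mathrm{loc}}(\Omega)$ by interior elliptic regularity. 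Since the rest of the paper never actually uses $H^2$ regularity up to the boundary --- only that $(\phi_n)$ is an orthonormal eigenbasis and that $\phi_n\in D(B)$ for the bounded multiplication operator $B$, which follows already from $\phi_n\in L^2$ --- this overstatement in the auxiliary theorem is harmless for the main results, but your instinct to flag it is correct.
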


{
\begin{theorem}[{\cite[Theorems XIII.69 and XIII.70]{reed_simon}}]\label{the-unbounded}
Let $\Omega=\R^d$  and  
$V\in L^1_{\mathrm{loc}}(\R^d,\R)$ be bounded from below and such that
$$\lim_{|x|\to \infty}V(x)=+\infty.$$
Then $-\Delta+V$, defined as a sum of quadratic forms, 
is a self-adjoint operator with compact resolvent.
In particular  $-\Delta+V$ 
has discrete  spectrum and 
admits a family of eigenfunctions 
in $H^2(\R^d,\R)$ which forms an orthonormal basis 
of $L^2(\R^d,\C)$.
Moreover, for
every  eigenfunction $\phi$ of  $-\Delta+V$ 
and for every $a>0$, $x\mapsto e^{a\|x\|}\phi(x)$ belongs to $L^2(\R^d,\C)$.
\end{theorem}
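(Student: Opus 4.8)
The statement collects several classical facts about confining \Sch\ operators, and I would prove them in the order asserted: self-adjointness of the form sum, compactness of the resolvent (hence discreteness of the spectrum and an $H^2$ eigenbasis), and finally the exponential decay of eigenfunctions.

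\textbf{The form sum.} Fix $M\ge 0$ with $V\ge -M$ and let $Q:=\{\psi\in H^1(\R^d,\C):\int_{\R^d}(V+M)|\psi|^2<\infty\}$, normed by $\|\psi\|_Q^2:=\|\nabla\psi\|_{L^2}^2+\int_{\R^d}(V+M)|\psi|^2$. On $Q$ consider the quadratic form $q(\psi):=\int_{\R^d}|\nabla\psi|^2+\int_{\R^d}V|\psi|^2$, i.e.\ the form sum of $-\Delta$ and multiplication by $V$. Then $q+M\|\cdot\|_{L^2}^2\ge 0$, and $(Q,\|\cdot\|_Q)$ is complete: a $\|\cdot\|_Q$-Cauchy sequence is Cauchy in $H^1$, and Fatou's lemma applied to the measure $(V+M)\,dx$ shows that its $H^1$-limit lies in $Q$ and is also its $\|\cdot\|_Q$-limit. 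Hence $q$ is closed and bounded below, and the representation theorem for closed semibounded forms produces a unique self-adjoint operator $-\Delta+V$ whose form is $q$ and which is bounded below by $-M$.

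\textbf{Compact resolvent, discrete spectrum, $H^2$ eigenfunctions.} The operator $(-\Delta+V+M+1)^{-1/2}$ is a bounded bijection of $L^2(\R^d,\C)$ onto $(Q,\|\cdot\|_Q)$, so it is enough to show that the inclusion $Q\hookrightarrow L^2(\R^d,\C)$ is compact; then the resolvent $(-\Delta+V+M+1)^{-1}$ is compact. Let $(\psi_k)\subset Q$ satisfy $\|\psi_k\|_Q\le C$, so that $\|\psi_k\|_{H^1}\le C$ and $\int_{\R^d}(V+M)|\psi_k|^2\le C$. By Rellich--Kondrachov together with a diagonal argument one extracts a subsequence converging in $L^2(B_R)$ for every integer $R$, while for $R$ large $\int_{|x|>R}|\psi_k|^2\le C\,\bigl(\inf_{|x|>R}(V(x)+M)\bigr)^{-1}\to 0$ uniformly in $k$ as $R\to\infty$, since $V(x)\to+\infty$. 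Hence that subsequence is Cauchy in $L^2(\R^d)$, proving the compact embedding. A self-adjoint operator with compact resolvent has spectrum consisting of isolated eigenvalues of finite multiplicity accumulating only at $+\infty$, with an associated orthonormal basis of eigenfunctions; since $V$ is real these may be chosen real-valued. Finally, an eigenfunction $\phi\in Q\subset H^1_{\mathrm{loc}}$ solves $-\Delta\phi=(\lambda-V)\phi$ distributionally with right-hand side in $L^1_{\mathrm{loc}}$, so elliptic regularity gives $\phi\in H^2_{\mathrm{loc}}(\R^d)$, and, combined with the exponential decay of $\phi$ and $\nabla\phi$ established below, $\phi\in H^2(\R^d,\R)$.

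\textbf{Exponential decay of eigenfunctions.} Let $\phi$ be a real eigenfunction, $-\Delta\phi=(\lambda-V)\phi$, and fix $a>0$. For $\eps>0$ set $F_\eps(x):=a|x|/(1+\eps|x|)$; each $F_\eps$ is bounded and Lipschitz with $|\nabla F_\eps|\le a$ everywhere, and $F_\eps\uparrow a|x|$ as $\eps\downarrow 0$. Since $e^{2F_\eps}\phi\in Q$, testing the weak eigenvalue equation against it and expanding $|\nabla(e^{F_\eps}\phi)|^2$ yields the Agmon-type identity
\[
\int_{\R^d}|\nabla(e^{F_\eps}\phi)|^2+\int_{\R^d}\bigl(V-\lambda-|\nabla F_\eps|^2\bigr)\,e^{2F_\eps}\phi^2=0 .
\]
Using $V(x)\to+\infty$, pick $R$ with $V(x)-\lambda-a^2\ge 1$ for $|x|>R$. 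Dropping the non-negative gradient term and splitting at $|x|=R$ gives
\[
\int_{|x|>R}e^{2F_\eps}\phi^2\le\int_{|x|\le R}\bigl|V-\lambda-|\nabla F_\eps|^2\bigr|\,e^{2F_\eps}\phi^2\le e^{2aR}\Bigl(\int_{|x|\le R}(V+M)\phi^2+(M+|\lambda|+a^2)\|\phi\|_{L^2}^2\Bigr)=:K_R ,
\]
and $K_R<\infty$ is independent of $\eps$, because $\int_{|x|\le R}(V+M)\phi^2<\infty$ (membership in $Q$) and $e^{2F_\eps}\le e^{2aR}$ on $B_R$. Letting $\eps\downarrow 0$, monotone convergence gives $\int_{|x|>R}e^{2a|x|}\phi^2\le K_R$; since also $\int_{|x|\le R}e^{2a|x|}\phi^2\le e^{2aR}\|\phi\|_{L^2}^2$, we obtain $x\mapsto e^{a\|x\|}\phi(x)\in L^2(\R^d,\C)$. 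The same identity shows that $e^{F_\eps}\nabla\phi$ is bounded in $L^2$ uniformly in $\eps$, whence $\nabla\phi$ decays exponentially as well.

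\textbf{Main obstacle.} The real content lies in two places. The first is the compact embedding $Q\hookrightarrow L^2$: this is the mechanism that converts the growth $V\to+\infty$ into discreteness of the spectrum, and it is obtained by combining local Rellich compactness with a uniform tail estimate. The second is the weighted Agmon identity; the delicate point there is that the eigenfunction is a priori only $H^1_{\mathrm{loc}}$, so every integration by parts must be performed with the bounded truncations $F_\eps$, the passage $F_\eps\to a\|x\|$ being taken only at the very end.
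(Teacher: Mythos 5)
The paper does not prove this statement---it is quoted from Reed and Simon---so there is no in-text argument to compare against; your reconstruction of the main assertions is the classical one and, for those parts, it is correct. The KLMN/closed-form construction of the form sum, the compactness of the embedding of the form domain $Q$ into $L^2$ (local Rellich--Kondrachov plus the uniform tail bound $\int_{|x|>R}|\psi_k|^2\le C\,(\inf_{|x|>R}(V+M))^{-1}\to 0$), the passage from that embedding to compactness of the resolvent via $(-\Delta+V+M+1)^{-1/2}$, and the Agmon-type identity with the bounded Lipschitz truncations $F_\eps$ followed by monotone convergence are exactly the right mechanisms, and the computations check out (including the admissibility of $e^{2F_\eps}\phi$ as a test function in $Q$).

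The one step that does not hold up is the $H^2$ regularity of the eigenfunctions. You invoke elliptic regularity for $-\Delta\phi=(\lambda-V)\phi$ ``with right-hand side in $L^1_{\mathrm{loc}}$'', but interior $H^2$ estimates require the right-hand side in $L^2_{\mathrm{loc}}$, and with $V$ merely in $L^1_{\mathrm{loc}}(\R^d,\R)$ and $\phi\in H^1_{\mathrm{loc}}$ the product $V\phi$ need not be locally square-integrable. Already in $d=1$, a potential behaving like $|x|^{-1/2}$ near the origin (and growing at infinity) is in $L^1_{\mathrm{loc}}$ and bounded below, yet $V\phi\notin L^2_{\mathrm{loc}}$ for the (everywhere positive) ground state, so $\phi\notin H^2_{\mathrm{loc}}$. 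Since $\phi\in H^2(\R^d)$ is equivalent to $\Delta\phi=(V-\lambda)\phi\in L^2(\R^d)$, this clause of the conclusion cannot be obtained from the argument as written: it needs an extra input (for instance $V\in L^2_{\mathrm{loc}}$ together with local boundedness of $\phi$, as holds in the concrete situations the paper uses), or the conclusion should be weakened to $\phi\in H^1$ with exponentially decaying gradient, which your Agmon estimate does deliver. Everything else in the proposal is sound.
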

}

In the following, we call controlled \Sch\ equation the partial differential equation
$$i\frac{\partial \psi}{\partial t}(t,x) = (-\Delta +V+uW)\psi(t,x)$$
where $\psi:I\times \Omega\to\C$, $\Omega$ is an open subset of $\R^d$, $I$ is a subinterval of $\R$ and, in the case in which $\Omega$ is bounded, $\psi|_{I\times\partial\Omega}=0$. 
The correct functional analysis framework for this equation is specified below.  

The following corollary, which is a straightforward consequence of the results recalled above, states that the assumptions of Definition~\ref{CDS} are fulfilled by the 
operators appearing in the controlled \Sch\ equation under natural hypotheses.

\begin{corol}\label{booo}
Let  $\Omega$ be an open subset of $\R^d$, $V,W$ be two real-valued functions defined on $\Omega$, and $U$ be a subset of $\R$. 
Assume either that (i) $\Omega$ is bounded, $V,W$ belong to $L^\infty(\Omega,\R)$
or that (ii) $\Omega=\R^d$, $V,W$ belong to $L^1_{\mathrm{loc}}(\R^d,\R)$, 
the growth of $W$ at infinity is at most exponential and, for every $u\in U$, $\lim_{\|x\|\to+\infty}(V(x)+uW(x))=+\infty$ and $\inf_{x\in\R^d}(V(x)+uW(x))>-\infty$.
Let $\H$ be equal to $L^2(\Omega,\C)$ and $D(A)$ be equal to
$H^2(\Omega,\C)\cap H^1_0(\Omega,\C)$ in case (i) and to $H^2(\Omega,\C)$ in case (ii). Let, moreover, $A$ be the differential operator $-i(-\Delta+V)$ and $B$ be the multiplication operator $-iW$.  
Then $(A,B,U)$ is a skew-adjoint  discrete-spectrum  control system, called the \emph{controlled \Sch\ equation} associated with $\Omega,\,V,\,W$ and $U$.  
\end{corol}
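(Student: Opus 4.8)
The plan is to reduce the verification of (H1)--(H3) of Definition~\ref{CDS} to the two spectral theorems quoted above together with standard facts about multiplication operators, treating the two cases of the hypothesis in parallel. In both of them the properties of $A=-i(-\Delta+V)$ will be read off from a cited theorem after multiplying by $-i$ --- which turns a self-adjoint operator with a real eigenbasis into a skew-adjoint one with the same eigenbasis and purely imaginary eigenvalues --- while the properties of $B=-iW$ come from the elementary fact that multiplication by $-i$ times a real-valued measurable function is skew-adjoint on its natural maximal domain $D(B)=\{f\in\H\mid Wf\in\H\}$ (and is bounded precisely when $W\in L^\infty$).

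First, in case (i): since $V\in L^\infty(\Omega,\R)$, Theorem~\ref{the-bounded} gives that $-\Delta+V$ with Dirichlet conditions is self-adjoint with compact resolvent, hence has discrete spectrum and an orthonormal basis $(\phi_n)_{n\in\N}$ of $\H=L^2(\Omega,\C)$ made of eigenfunctions in $H^2(\Omega,\C)\cap H^1_0(\Omega,\C)$. Multiplying by $-i$ yields (H1) for $A$ and (H2), with $D(A)=H^2(\Omega,\C)\cap H^1_0(\Omega,\C)$. Since moreover $W\in L^\infty(\Omega,\R)$, the operator $B=-iW$ is bounded and skew-adjoint with $D(B)=\H$, so (H1) for $B$ holds and (H3) is automatic because every $\phi_n$ lies in $\H=D(B)$.

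Next, in case (ii): the growth hypothesis guarantees that $V$ is bounded from below and that $V(x)\to+\infty$ as $\|x\|\to\infty$, so Theorem~\ref{the-unbounded} applies to $-\Delta+V$; after multiplication by $-i$ this gives (H1) for $A$ and (H2), with an eigenbasis $(\phi_n)_{n\in\N}\subset H^2(\R^d,\C)$, and in addition the quantitative decay $e^{a\|\cdot\|}\phi_n\in L^2(\R^d,\C)$ for every $a>0$. Again $B=-iW$ is skew-adjoint on its maximal domain, so the only point left is (H3), namely $W\phi_n\in L^2(\R^d,\C)$ for every $n$. I would obtain this by writing $W\phi_n=(e^{-a\|x\|}W)\,(e^{a\|x\|}\phi_n)$: choosing $a$ larger than the exponential growth rate of $W$ makes the first factor essentially bounded, while the second factor is in $L^2$ by the decay estimate; combined with the local integrability of $W$ and the (elliptic) regularity of $\phi_n$, which solves $-\Delta\phi_n=(\lambda_n-V)\phi_n$, this yields $W\phi_n\in L^2$, i.e.\ (H3).

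Finally I would record the complementary remark that, for every $u\in U$, applying Theorem~\ref{the-unbounded} (respectively Theorem~\ref{the-bounded}) to the potential $V+uW$ shows that $-\Delta+V+uW$ is itself self-adjoint, so that the abstract skew-adjoint extension used in Section~\ref{s-math-f} to give meaning to $A+uB$ coincides with the genuine operator $-i(-\Delta+V+uW)$; this is the precise sense in which the abstract framework specializes faithfully to the controlled \Sch\ equation \r{eqeq2}. The only step here that is not pure bookkeeping is the verification of (H3) in case (ii): it is exactly there that the two quantitative assumptions --- exponential decay of the eigenfunctions and at-most-exponential growth of $W$ --- are used, and the only real care needed is in controlling $W\phi_n$ near the possible local singularities of $W$.
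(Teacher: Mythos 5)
The paper states Corollary~\ref{booo} as a ``straightforward consequence of the results recalled above'' and does not supply a proof, so there is no explicit argument to compare against. Your structure---transferring self-adjointness from Theorems~\ref{the-bounded} and \ref{the-unbounded} to skew-adjointness of $A$ by multiplication by $-i$, observing that multiplication by $-i$ times a real-valued function is skew-adjoint on its maximal domain, and isolating (H3) in case (ii) as the single nontrivial verification---is exactly what the paper intends, and the factorization $W\phi_n = (e^{-a\|x\|}W)(e^{a\|x\|}\phi_n)$ is the right way to combine the exponential decay of eigenfunctions from Theorem~\ref{the-unbounded} with the at-most-exponential growth of $W$.

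Two steps that you assert are not actually secured by the stated hypotheses, however. First, you claim that ``the growth hypothesis guarantees that $V$ is bounded from below and that $V(x)\to+\infty$''; but hypothesis (ii) only constrains $V+uW$ for $u\in U$, and $0$ need not belong to $U$ (indeed, in Theorem~\ref{main} the control set is $(0,\delta)$). Taking, for instance, $V(x)=e^{\|x\|}-e^{2\|x\|}$, $W(x)=e^{2\|x\|}$, and $U=\{1\}$ satisfies (ii) while $V$ itself is unbounded below, so Theorem~\ref{the-unbounded} cannot be applied to $-\Delta+V$ without a further argument. Second, you identify but do not close the issue of local singularities of $W$: from $W\in L^1_{\mathrm{loc}}$ and $\phi_n\in H^2$ alone one does not get $W\phi_n\in L^2_{\mathrm{loc}}$---you would want, say, $W\in L^2_{\mathrm{loc}}$ together with $\phi_n\in L^\infty_{\mathrm{loc}}$---and your appeal to ``the local integrability of $W$ and the elliptic regularity of $\phi_n$'' is left as a gesture rather than an estimate. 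Both points are arguably imprecisions inherited from the paper's own ``straightforward consequence'' phrasing, but a complete proof would have to resolve them rather than assume them.
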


Since the controlled \Sch\ equation is a skew-adjoint discrete-spectrum  control system, it makes sense to apply Theorem~\ref{main} to it. The result is the following theorem.

\begin{theorem}\label{main-Sch}
Let $\Omega,\,V,\,W$ and $U$ satisfy  
one of the hypotheses (i) or (ii) of Corollary~\ref{booo}. 
Denote by $(\lb_k)_{k\in\N}$ the sequence of eigenvalues of $-\Delta+V$ and by $(\phi_k)_{k\in\N}$ an orthonormal basis of $L^2(\Omega,\C)$ of corresponding real-valued eigenfunctions. 
Assume, in addition to (i) or (ii), that $U$ contains the interval $(0,\delta)$ for some $\delta>0$, that 
the elements of $(\lambda_{k+1}-\lb_k)_{k\in\N}$ are $\Q$-linearly independent, and that $\int_\Omega W(x)\phi_k\phi_{k+1}\,dx\ne0$ for every $k\in\N$. Then 
the {controlled \Sch\ equation} associated with $\Omega,\,V,\,W$ and $U$
is approximately controllable.
\end{theorem}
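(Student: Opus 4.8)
The plan is to read Theorem~\ref{main-Sch} as a direct corollary of Theorem~\ref{main}: essentially all the analytic substance has already been established, so what remains is to check that the controlled \Sch\ equation associated with $\Omega,\,V,\,W$ and $U$ is one of the systems to which Theorem~\ref{main} applies, and then to reconcile the control sets. First I would invoke Corollary~\ref{booo}: under hypothesis (i) or (ii) the operators $A=-i(-\Delta+V)$ (with $D(A)=H^2(\Omega,\C)\cap H^1_0(\Omega,\C)$, resp.\ $H^2(\R^d,\C)$) and $B=-iW$ form a skew-adjoint \diag\ control system on $\H=L^2(\Omega,\C)$, so that (H1)--(H3) hold and $(\phi_k)_{k\in\N}$ is an orthonormal basis of eigenvectors of $A$ with $\phi_k\in D(B)$ for all $k$. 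In case (ii) the membership $\phi_k\in D(B)$, i.e.\ $W\phi_k\in L^2$, is exactly the point where the exponential decay of the eigenfunctions supplied by Theorem~\ref{the-unbounded} plays against the at-most-exponential growth of $W$; but this has already been folded into Corollary~\ref{booo}, so I may take it for granted here.

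Next I would match the spectral data. Since $(-\Delta+V)\phi_k=\lb_k\phi_k$, the eigenvalue of the skew-adjoint generator $A$ attached to $\phi_k$ is $-i\lb_k$; writing it in the form $i\mu_k$ with $\mu_k\in\R$, as in the statement of Theorem~\ref{main}, one gets $\mu_k=-\lb_k$, hence $\mu_{k+1}-\mu_k=-(\lb_{k+1}-\lb_k)$. Since $\Q$-linear independence of a sequence is unaffected by a global change of sign, the hypothesis that $(\lb_{k+1}-\lb_k)_{k\in\N}$ is $\Q$-linearly independent is precisely the non-resonance condition required by Theorem~\ref{main}. For the coupling condition I would use that the eigenfunctions are chosen real-valued, so that
$$\la B\phi_k,\phi_{k+1}\ra=\la -iW\phi_k,\phi_{k+1}\ra=-i\int_\Omega W(x)\,\phi_k(x)\,\phi_{k+1}(x)\,dx\,,$$
which is nonzero exactly under the assumed hypothesis $\int_\Omega W\phi_k\phi_{k+1}\,dx\ne0$.

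Finally I would settle the control set. Applying Theorem~\ref{main} to $\delta>0$ and $(A,B,(0,\delta))$ yields approximate controllability of $(A,B,(0,\delta))$. Since $U\supseteq(0,\delta)$ by assumption, and since the concatenations $e^{t_k(A+u_kB)}\circ\cdots\circ e^{t_1(A+u_1B)}$ appearing in Definition~\ref{controllability} depend only on the chosen values $u_1,\dots,u_k$ and not on the ambient set they are drawn from, every steering strategy using controls in $(0,\delta)$ is a fortiori a steering strategy using controls in $U$; hence $(A,B,U)$ is approximately controllable, which is the assertion. I do not anticipate any genuine obstacle here: the whole argument is bookkeeping on top of Theorem~\ref{main} and of the spectral facts recalled in Section~\ref{bible}. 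The only point I would be careful about is not to conflate the Hamiltonian eigenvalues $\lb_k$ of $-\Delta+V$ with the eigenvalues of the skew-adjoint operator $A$ (they differ by the factor $-i$), since that is the sole place where a stray sign or factor could creep in.
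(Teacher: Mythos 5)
Your proof is correct and takes exactly the route the paper takes (the paper presents Theorem~\ref{main-Sch} as an immediate application of Theorem~\ref{main} via Corollary~\ref{booo}, with no further detail given). Your careful bookkeeping of the factor $-i$ between the eigenvalues of $-\Delta+V$ and those of $A$, the use of real-valued eigenfunctions to identify $\la B\phi_k,\phi_{k+1}\ra$ with $\pm i\int_\Omega W\phi_k\phi_{k+1}\,dx$, and the observation that $(0,\delta)\subseteq U$ suffices, are all the right checks and are exactly what is being used implicitly.
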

As stressed just after the statement of Theorem~\ref{main}, 
the condition $\int_\Omega W(x)\phi_k\phi_{k+1}\,dx\ne0$
could be replaced by a weaker one (see  Remark~\ref{connectedness}).

\section{Proof of Theorem~\ref{main}}\label{scheme}\label{s-proof}

The proof of Theorem~\ref{main} is split in 
several steps.
First, in Section~\ref{TR} the controllability problem is transformed, thanks to a time-reparameterization,
 into an equivalent one where $A$ and $B$ play the role of controlled dynamics and drift, respectively. 
 Then, in Section~\ref{GalAp}, 
 we prove a controllability result for the 
 Galerkin approximations  of this equivalent system. In Section~\ref{lifting} we show how to 
 lift the  controllability properties from a Galerkin approximation to an higher-dimensional one. 
 Section~\ref{china} makes the link between finite-dimensional and infinite-dimensional controllability properties
 and completes the proof. 

Finally, in Section~\ref{lowb}, as a byproduct of the arguments of the proof, we get  a lower bound on the minimum  steering time.

\subsection{Time-reparameterization}\label{TR}
First
remark that, if $u\ne 0$, 
$e^{t(A+u B)}=e^{tu((1/u)A+ B)}$.
Theorem~\ref{main} is therefore equivalent to the following property: 
if the elements of the sequence
$(\lb_{n+1}-\lb_{n})_{n\in\N}$ are $\Q$-linearly independent
and if 
$\la B \phi_n, \phi_{n+1}\ra\ne 0$ for every $n\in \N$, then 
for every  $\delta,\eps>0$ and 
every  $\psi_0,\psi_1\in \s$ 
there exist $k\in \N$, $t_1,\dots,t_k>0$ and $u_1,\dots,u_k>\delta$ such that 
\be
\|\psi_1-e^{t_k(u_k A +B)}\circ \cdots \circ e^{t_1(u_1 A+ B)}(\psi_0)\|<\eps.
\ee
In other words, the system for which the roles of $A$ and $B$ as drift and controlled field are inverted, namely, 
\begin{equation}\label{timed}
\frac{d \psi}{dt}(t)=u(t) A\psi(t) + B\psi(t),\ \ \ \  \ \ u(t)\in U,
\end{equation}
is approximately controllable provided that the control set $U$ contains a half-line.
The notion of solution of \r{timed} corresponding to a piecewise constant control function is 
defined 
as in \r{solu}.

\subsection{Controllability of the Galerkin approximations}\label{GalAp}
Let, for every $j,k\in\N$,  $b_{jk}=\la B \phi_j, \phi_k\ra$ and 
$a_{jk}=\la A \phi_j, \phi_k\ra=i \lb_j \delta_{jk}$. 
(Recall that $\lb_j\in\R$ and $\{i\lb_j\mid j\in\N\}$ is the spectrum of $A$.)
Define, for every $n\in\N$,
the two complex-valued $n\times n$ matrices  $A^{(n)}=(a_{jk})_{1\leq j,k\leq n}$ and $B^{(n)}=(b_{jk})_{1\leq j,k\leq n}$. 
The Galerkyn approximation of \r{timed} at order $n$ 
(with respect to the basis $(\phi_k)_{k\in\N}$)
is the finite-dimensional control system
$$
\frac{d x}{dt}=u A^{(n)}x +B^{(n)}x,\ \ \ \ \ \ x\in\s_n,\ \ \ \ \ \ u>\delta,\eqno{(\Sigma_n)}
$$
where $\s_n$ denotes the unit sphere of $\C^n$.
Notice that the system is well defined since, by construction, 
$A^{(n)}$ and $B^{(n)}$ are skew-Hermitian matrices.

We say the ($\Sigma_n$) is {\it controllable} if for every $x_0,x_1\in \s_n$ there exist $k\in \N$, $t_1,\dots,t_k>0$ and $u_1,\dots,u_k>\delta$ such that 
$$
x_1=e^{t_k(u_k A^{(n)} +B^{(n)})}\circ \cdots \circ e^{t_1(u_1 A^{(n)}+ B^{(n)})}x_0.
$$

We recall that a $n\times n$ matrix $C=(c_{jk})_{1\leq j,k\leq n}$ is said to be connected  
 if for every pair of indices $j,k\in\{1,\dots,n\}$ there exists a finite sequence $r_1,\dots,r_l\in\{1,\dots,n\}$ such that $c_{j r_1}c_{r_1 r_2}\cdots c_{r_{l-1}r_l}c_{r_l k}\ne 0$. (In the literature connected matrices are 
 sometimes called {\it einfach}, or irreducible, or inseparable.)
The following proposition is
in the spirit of the controllability results obtained in \cite{agrachev_chambrion} and \cite{turinici}.

\newcommand{\An}{\mathfrak{A}}
\newcommand{\Bn}{\mathfrak{B}}
\newcommand{\an}{\alpha}
\newcommand{\bn}{\beta}
\begin{prop} \label{k-lab}
Let $\An=(\an_{jk})_{j,k=1}^n$,  $\Bn=(\bn_{jk})_{j,k=1}^n$ be two skew-symmetric $n\times n$ matrices and assume that $\An$ is diagonal and
$\Bn$ is connected. 
Assume moreover that $|\an_{jj}-\an_{kk}|\ne|\an_{ll}-\an_{mm}|$ if $\{j,k\}\ne\{l,m\}$. 
Then  the control system
$(\overline{\Sigma}): \dot x=u\An x+\Bn x$  
is controllable in $\s_n$ with piecewise constant controls $u:\R\to \overline{U}$, provided that $\overline{U}$ contains at least two points. 
\end{prop}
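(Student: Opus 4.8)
The plan is to realize the full unitary group $SU(n)$ (or at least a group acting transitively on $\s_n$) inside the closure of the reachable set of control directions, by combining the flow of the drift $e^{t\Bn}$ with the controlled vector fields $u\An$. The key point is that since $\overline U$ contains at least two points, say $u_1\ne u_2$, taking differences of the available fields $u_1\An+\Bn$ and $u_2\An+\Bn$ gives, after an affine combination inside the \emph{closure} of the system group, access to $(u_1-u_2)\An$, hence to $\An$ itself (as a direction, via long/short time rescaling $e^{t(u\An+\Bn)}$ with $u$ large, or via the standard trick $e^{t(u\An+\Bn)}e^{-t\Bn}\to e^{t\An}$ with $t$ small — the latter must be done with care since $\Bn$ is still present). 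The cleanest route is: (1) first show that $e^{\R\An}$ and $e^{\R\Bn}$ both lie in the closure $\overline{\mathcal G}$ of the group generated by the reachable time-one maps; (2) then invoke the Lie-algebra rank / Lie-bracket generation argument: the Lie algebra generated by $\An$ and $\Bn$ inside $\mathfrak{su}(n)$ must be all of $\mathfrak{su}(n)$, which is where the two hypotheses — $\Bn$ connected and the $\An$-gaps $\an_{jj}-\an_{kk}$ distinct in absolute value — enter decisively; (3) conclude that $\overline{\mathcal G}\supseteq SU(n)$, which acts transitively on $\s_n$, and finally upgrade from "closure" to exact reachability using the compactness of the group (a standard argument: reachable sets on compact groups with a bracket-generating family are closed, or one uses that $\mathcal G$ is a subgroup of the compact group $U(n)$ whose closure is itself).

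For step (2), the Lie-algebra generation, here is the mechanism I expect to use. Write $\An=\mathrm{diag}(i\mu_1,\dots,i\mu_n)$. Conjugating $\Bn$ by $e^{t\An}$ and integrating/differentiating in $t$ produces, for each off-diagonal entry $\bn_{jk}\ne 0$, the elementary skew-Hermitian matrices $E_{jk}-E_{kj}$ and $i(E_{jk}+E_{kj})$ times $\bn_{jk}$ — this is possible precisely because the "frequencies" $\mu_j-\mu_k$ are pairwise distinct (in absolute value), so one can isolate the $(j,k)$-Fourier component of $t\mapsto e^{t\An}\Bn e^{-t\An}$ by averaging against the appropriate exponential. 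This is exactly the non-resonance condition $|\an_{jj}-\an_{kk}|\ne|\an_{ll}-\an_{mm}|$ for $\{j,k\}\ne\{l,m\}$; note one needs absolute values because $(j,k)$ and $(k,j)$ carry conjugate frequencies and must be separated as a pair, not individually. Once we have all $E_{jk}-E_{kj}$ and $i(E_{jk}+E_{kj})$ for indices $(j,k)$ with $\bn_{jk}\ne 0$, the connectedness of $\Bn$ lets us take iterated brackets along chains $j\to r_1\to\cdots\to k$ to reach \emph{every} pair of indices, and brackets of the form $[E_{jk}-E_{kj},\,i(E_{jk}+E_{kj})]$ produce the diagonal generators; together these span $\mathfrak{su}(n)$.

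The main obstacle, and the step requiring the most care, is making rigorous the passage from the available flows to the one-parameter subgroups $e^{t\An}$ and $e^{t\Bn}$ and the bracket-generated subgroups, all while staying inside the \emph{closure} of a set generated by maps of the form $e^{t(u\An+\Bn)}$ with $t>0$ and $u\in\overline U$ — in particular the time-positivity constraint and the fact that $B$ cannot be "switched off" mean one cannot naively write $e^{t\An}=\lim e^{t(u\An+\Bn)/u}$ without checking that the error terms vanish and that all needed maps (including inverses, obtained as $e^{(T-t)X}$ with $T$ the period when $e^{TX}=\mathrm{id}$, or as limits) are genuinely in the closure. This is precisely the kind of finite-dimensional saturation argument carried out in \cite{agrachev_chambrion}; the cited results there should supply most of the machinery, and I would structure the proof of Proposition~\ref{k-lab} as a reduction to (a mild variant of) their statement, with the non-resonance hypothesis on the $\An$-gaps doing the work of separating frequencies and the connectedness of $\Bn$ doing the work of propagating excitation across all modes.
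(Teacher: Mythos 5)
Your plan contains the right ingredients, and the core Lie-algebraic computation (non-resonance isolates the $(j,k)$-mode of $\Bn$, connectedness then propagates to span $\mathfrak{su}(n)$) matches the paper's, but you take a genuinely more circuitous route, and in two places the paper's proof is structurally simpler than what you sketch. First, your step (1) --- trying to realize $e^{\R\An}$ and $e^{\R\Bn}$ separately inside the closure of the reachable set, and worrying about the fact that $\Bn$ cannot be switched off and time must be positive --- is unnecessary. The paper never extracts the flow of $\An$ alone. It works directly with the Lie algebra of the \emph{family of vector fields} $\{u\An+\Bn:\,u\in\overline U\}$: since $\overline U$ has two points, this Lie algebra equals $\Lie(\An,\Bn)$ for trivial linear-algebra reasons, and once one shows $\mathfrak{su}(n)\subseteq\Lie(\An,\Bn)$, the endgame is to push this down the submersion $SU(n)\to\s_n$ to get the full rank condition on $T_x\s_n$ at every point, and then invoke the general theorem (volume-preserving flows on a compact manifold plus the Lie algebra rank condition imply controllability, via Poisson stability; this is \cite[Cor.\ 8.6, Prop.\ 8.14, Th.\ 8.15]{book2}). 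No saturation-style construction of individual one-parameter subgroups is needed, so the concerns you flag about time positivity and inverses simply do not arise. Second, your mechanism for isolating the $(j,k)$-component --- conjugating $\Bn$ by $e^{t\An}$ and averaging against $e^{-it(\mu_j-\mu_k)}$ --- is morally right but requires integrals, limits, and a careful separation of the real and imaginary skew-Hermitian pieces, and the object you average is $\mathrm{Ad}_{e^{t\An}}(\Bn)$ rather than a bracket, so getting it into $\Lie(\An,\Bn)$ needs an extra argument. The paper instead takes \emph{finite} iterated brackets $M_p=\ad_\An^p(\Bn)=\sum_{l,m}(\an_{ll}-\an_{mm})^p\bn_{lm}e_{lm}$ and applies a Lagrange-interpolation polynomial $P_{jk}$ to the squared gaps $(\an_{ll}-\an_{mm})^2$: the resulting $N_{jk}=\sum_h c_h M_{2h}$ is exactly $\bn_{jk}e_{jk}-\overline{\bn_{jk}}e_{kj}$, already skew-Hermitian, and $[\An,N_{jk}]$ supplies the companion generator. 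This is a purely algebraic identity with no limits, which is why the absolute-value form of the non-resonance hypothesis (distinct squared gaps) is the natural one. So: same destination, same two hypotheses doing the same two jobs, but the paper's route avoids both your closure-of-flows detour and your Fourier-averaging machinery.
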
 

\begin{proof}
For every $1\leq j,k\leq n$ let $e_{jk}$ be the $n\times n$ 
matrix whose entries are all equal to zero except the one at line $j$ and column $k$ which is equal to $1$. 

Define for every $p\in\N$ the iterated matrices commutator
 $M_p=\mathrm{ad}^p_{\An}(\Bn)$. (Recall the usual notation $\ad_X(Y)=[X,Y]=XY-YX$ for the adjoint operator associated with $X$.)
A simple induction on $p$ 
shows that 
the matrix $M_p$
has the expression
$$M_p=\sum_{l,m=1}^n  (\an_{ll}-\an_{mm})^p  \bn_{lm}e_{lm}.$$
 
 Fix two indices $j\neq k$ such that $1\leq j,k \leq n$ and $\bn_{jk}\ne 0$. 
 Since, by hypothesis,  $(\an_{jj} - \an_{kk})^2 \neq (\an_{ll} - \an_{mm})^2$
 as soon as $\{j,k\} \neq \{l,m\}$,
 there exists some polynomial $P_{jk}$ with real coefficients such that
 $P_{jk}\big((\an_{jj} - \an_{kk})^2\big)=1$ and $P_{jk}\big((\an_{ll} - \an_{mm})^2\big)=0$ for
 all $\{l,m\} \neq \{j,k\}$, $1\leq l,m \leq n$. 
 
 Let us define $(c_h)_h$ as the coefficients of $P_{jk}$, i.e.,  $P_{jk}(X)=\sum_{h=0}^{d} c_{h} X^{h}$.
 Define moreover the
 matrix $N_{jk}=\sum_{h=0}^{d} c_{h} M_{2h}$. 
 By
 construction 
 $N_{jk}=\sum_{l,m=1}^n \bn_{lm} e_{lm} P_{jk}\big((\an_{ll} - \an_{mm})^2\big)=\bn_{jk}e_{jk}-\overline{\bn_{jk}}e_{kj}$.
 Therefore, 
the commutator $[\An,N_{jk}]$ 
is equal to $(\an_{jj}-\an_{kk})(\bn_{jk}e_{jk}+\overline{\bn_{jk}}e_{kj})$
and so the Lie algebra generated by $\An$ and $\Bn$
contains the two elementary anti-Hermitian matrices 
$E_{jk}=e_{jk}-e_{kj}$ and $F_{jk}=i (e_{jk}+e_{kj})$.

 Notice now that, for every $1\leq j,k,h,m\leq n$, $e_{jk}e_{hm}=\delta_{kh}e_{jm}$ and therefore 
\brs
 {[E_{jk},E_{km}]}&=&E_{jm}+\delta_{km}E_{kj}+\delta_{kj}E_{mk},\\
 {[E_{jk},F_{jk}]}&=&2i(e_{jj}-e_{kk}).
\ers 
 It follows from the definition of connected matrix and the relation 
  $[\An,E_{jk}]=i(\an_{kk}-\an_{jj})F_{jk}$ that  the Lie algebra generated by $\An$ and $\Bn$ contains the matrices $E_{jk}$, $F_{jk}$ and $i(e_{jj}-e_{kk})$ for every 
  $j\ne k$. Therefore
 \begin{equation}
 \mathfrak{su}(n)\subseteq \Lie(\An,\Bn).
 \label{sun} 
 \end{equation}
 \newcommand{\pr}{{\mathcal P}}
  Fix $\bar x\in\s_n$ and consider the submersion $\pr:SU(n)\to \s_n$ defined by $\pr(g)=g \bar x$. Since
  $$T_{\pr(g)} (\s_n)=\pr_*(T_g SU(n))=\pr_*(\mathfrak{su}(n) g)=\mathfrak{su}(n)g \bar x=\mathfrak{su}(n)\pr(g),$$
   then the evaluation at $x=\pr(g)$ of the Lie    algebra generated by $\An$ and $\Bn$ contains the whole space $T_x\s_n$. Since for any $u\in\overline{U}$ and any $t\in\R$ the flow $e^{t(u\An+ \Bn)}:\s_n\to\s_n$ is volume-preserving then $(\overline{\Sigma})$ is controllable (see \cite[Cor. 8.6, Prop. 8.14, Th. 8.15]{book2}).
    \end{proof}
 
 The condition $b_{j,j+1}\ne 0$ for every $j\in \N$ appearing in Theorem~\ref{main} clearly ensures that every matrix $B^{(n)}$ is connected.
  Proposition~\ref{k-lab}, applied to $A^{(n)}=\An$ and 
 $B^{(n)}=\Bn$, implies therefore that $(\Sigma_n)$ is controllable.
 
\begin{rem}\label{connectedness}
 In the following we can replace the  assumption 
that $b_{j,j+1}\ne 0$ for every $j\in \N$
 with the weaker one that $B^{(n)}$ is frequently connected, that is,
 \be\label{connect}
\forall j\in\N,\exists k\geq j\mid B^{(k)}\mbox{ is connected.}
 \ee
 Notice, as a partial counterpart, that if there exists a nonempty and proper subset $\Xi$ of $\N$ such that for every $j\in \Xi$ and $k\in \N\smallsetminus \Xi$ the coefficient $b_{jk}$ is equal to zero (i.e., the infinite-dimensional matrix $(b_{lm})_{l,m\in\N}$ is non-connected) 
 then the control system \r{main_EQ} is not approximately controllable. Indeed, the subspace
 $\spann\{\phi_k\mid k\in \Xi\}$ is invariant for the dynamics of $A+u B$ for every $u\in U$ and
 has nontrivial (invariant) orthogonal. 
  \end{rem}

 \subsection{Approximate controllability in higher-dimensional projections}\label{lifting}
Fix $\delta,\eps>0$ and $\psi_0,\psi_1\in \s$. 
For every $n\in \N$, let $\Pi_n:\H\to \H$ be the orthogonal projection 
on the 
space $
\spann(\phi_1,\dots,\phi_n)$ and $\PPi_n:\H\to\C^n$ be the map that associates to an element of $\H$ the vector of its first $n$ coordinates with respect to the basis $(\phi_m)_{m\in \N}$.
Choose $n$ such that 
\begin{equation}\label{newequation}
\|\psi_j-\Pi_n(\psi_j)\|<\eeta\ \ \ \mbox{ for $j=0,1$.}
\end{equation} 
 
Thanks to \r{connect} we can assume, without loss of generality, that ($\Sigma_n$) is controllable. Let $u:[0,T]\to (\delta,\infty)$ 
be the piecewise constant control driving $\xi_0/\|\xi_0\|$ to $\xi_1/\|\xi_1\|$ where $\xi_j=\PPi_n(\psi_j)$ for $j=0,1$.

 
Let  $\mu>0$ be a 
constant which will be chosen later small enough, 
depending on $T$, $n$, and $\eps$.
Notice that for every $j\in\N$ the hypothesis that $\phi_j$ belongs to $D(B)$ implies that 
the sequence $(b_{jk})_{k\in \N}$ is in $l^2$. 
It is therefore possible to choose 
$N\geq n$ such that 
\be
\sum_{k> N}|b_{jk}|^2<\mu,\quad \mbox{ for every }\ j=1,\dots,n.
\label{mubastardo}
\ee

If $t\mapsto X(t)$ is a solution of ($\Sigma_N$) corresponding to a control function $U(\cdot)$, then 
$t\mapsto e^{-V(t)A^{(N)}}X(t)=Y(t)$, where $V(t)=\int_0^t U(\tau)d\tau$, 
is a solution of
$$\dot Y(t)=e^{-V(t)A^{(N)}} B^{(N)} e^{V(t)A^{(N)}}Y(t). \eqno{(\Theta_N)}$$

Let us represent the matrix  $e^{-v(t)A^{(N)}} B^{(N)} e^{v(t)A^{(N)}}$, where $v(t)=\int_0^t u(\tau)d\tau$, in block form as follows
\beq
e^{-v(t)A^{(N)}} B^{(N)} e^{v(t)A^{(N)}}=\lp\ba{cc}M^{(n,n)}(t) & M^{(n,N-n)}(t) \\  M^{(N-n,n)}(t) & M^{(N-n,N-n)}(t) \ea\rp\,,
\label{blocks}
\eeq
where the superscripts indicate the dimensions of each block. 
\begin{claim}\label{roba}
There exists a sequence of piecewise constant 
control functions $u_k:[0,T]\to (\delta,\infty)$ such that
the sequence of matrix-valued curves
$$t\mapsto M_k(t)=e^{-v_k(t)A^{(N)}} B^{(N)} e^{v_k(t)A^{(N)}},$$
where $v_k(t)=\int_0^t u_k(\tau)d\tau$, 
converges to
\[t\mapsto M(t)=\lp\ba{cc} M^{(n,n)}(t) &0_{n\times (N-n)}\\0_{(N-n)\times n}& M^{(N-n,N-n)}(t)\ea\rp\]
in the following integral sense
\be\label{integral}
\int_0^t M_k(\tau)d\tau \to \int_0^t M(\tau)d\tau
\ee
as $k\to \infty$ uniformly with respect to $t\in [0,T]$.
\end{claim}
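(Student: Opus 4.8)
The plan is to prove the claim by an averaging / partial‑decoupling argument that exploits the diagonal form of $A^{(N)}$. Writing $\omega_{jl}=\lambda_l-\lambda_j$, the $(j,l)$ entry of $M_k(t)$ is $b_{jl}e^{iv_k(t)\omega_{jl}}$ and that of $M(t)$ is $b_{jl}e^{iv(t)\omega_{jl}}$ when $j,l$ lie in the same block ($j,l\le n$, or $n<j,l\le N$) and $0$ otherwise. The structural point I would use is that, since $\omega_{jl}=\sum_{m=j}^{l-1}(\lambda_{m+1}-\lambda_m)$ for $j<l$, every \emph{diagonal‑block} frequency $\omega_{jl}$ is a $\Z$‑combination of the gaps $\lambda_{m+1}-\lambda_m$ with $m\ne n$, whereas every \emph{off‑diagonal‑block} frequency equals $\pm(\lambda_{n+1}-\lambda_n)$ plus such a $\Z$‑combination. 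Because the gaps are $\Q$‑linearly independent, $\gamma:=\lambda_{n+1}-\lambda_n$ is non‑resonant with all the other gaps, and this is exactly what should let us annihilate the off‑diagonal blocks while essentially preserving the diagonal ones.

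Concretely, I would construct $u_k$ on the uniform subdivision $s_p=pT/k$ ($p=0,\dots,k$) by letting $u_k$ coincide with $u$ on $[s_{p-1},s_p-\eta_k]$ and equal a large constant $R_k$ on a short ``excursion'' $[s_p-\eta_k,s_p]$ during which $v_k$ increases by a prescribed amount $\Delta_k$ (so $\eta_k=\Delta_k/R_k$); choosing $R_k\gg k\Delta_k$ forces the total excursion time $k\eta_k$ to $0$. Such a $u_k$ is piecewise constant with values in $(\delta,\infty)$, and on the $p$‑th tracking interval $v_k(t)=v(t)+(p-1)\Delta_k+o(1)$. I would then estimate $\int_0^tM_k$ piece by piece: on an excursion the change of variable $w=v_k(\tau)$ shows that an entry with $\omega_{jl}\ne0$ contributes $O(1/R_k)$ and a diagonal entry $O(\eta_k)$, so all excursions together contribute $O(k/R_k)+O(k\eta_k)\to0$; on the tracking part of $[s_{p-1},s_p]$ one has $\int e^{iv_k\omega_{jl}}\,d\tau=e^{i(p-1)\Delta_k\omega_{jl}}\frac{T}{k}e^{iv(s_{p-1})\omega_{jl}}+O(1/k^2)$. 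Summing gives, uniformly in $t\in[0,T]$,
\[ \int_0^t M_k(\tau)_{jl}\,d\tau=b_{jl}\sum_{p\,:\,s_p\le t}e^{i(p-1)\Delta_k\omega_{jl}}\,\frac{T}{k}\,e^{iv(s_{p-1})\omega_{jl}}+o(1). \]

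To conclude I would choose $\Delta_k\to\infty$, using Kronecker's theorem and the $\Q$‑linear independence of the gaps, so that $\mathrm{dist}\big(\Delta_k(\lambda_{m+1}-\lambda_m),2\pi\Z\big)=o(1/k)$ for every $m\ne n$ with $m\le N-1$, while $\Delta_k\gamma$ stays bounded away from $2\pi\Z$. For a diagonal‑block entry one then has $\mathrm{dist}(\Delta_k\omega_{jl},2\pi\Z)=o(1/k)$ as well, hence $e^{i(p-1)\Delta_k\omega_{jl}}=1+o(1)$ uniformly for $p\le k$, and the sum above is a Riemann sum converging uniformly in $t$ to $b_{jl}\int_0^te^{iv(\tau)\omega_{jl}}\,d\tau=\int_0^tM(\tau)_{jl}\,d\tau$. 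For an off‑diagonal‑block entry, $\Delta_k\omega_{jl}$ stays away from $2\pi\Z$, so the partial sums $\sum_{p\le m}e^{i(p-1)\Delta_k\omega_{jl}}$ are bounded uniformly in $m$ and $k$, and an Abel summation (using that the weights vary slowly, $v$ being uniformly continuous) shows that the sum is $O(1/k)$, hence tends to $0=\int_0^tM(\tau)_{jl}\,d\tau$. Since there are finitely many entries, \r{integral} follows.

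I expect the main obstacle to be the simultaneous control of the accumulated phases, i.e.\ the number‑theoretic choice of the increments $\Delta_k$: one must drive all the (finitely many) diagonal‑block frequencies to integer multiples of $2\pi$ fast enough — at rate $o(1/k)$, so that the phases $e^{i(p-1)\Delta_k\omega_{jl}}$ with $p\le k$ are uniformly close to $1$ — while keeping every off‑diagonal‑block frequency away from $2\pi\Z$, and this is exactly where the $\Q$‑linear independence of the spectral gaps enters; one then has to propagate these phase estimates into bounds on the oscillatory sums that are uniform in the truncation time $t$. The accompanying time‑bookkeeping for the excursions (preserving piecewise constancy and $u_k>\delta$, and keeping $v_k$ close to $v(\cdot)+(p-1)\Delta_k$ on the tracking intervals) is routine but needs some care.
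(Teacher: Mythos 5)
Your proposal is correct, and it genuinely differs from the paper's proof. Both arguments rest on the same number‑theoretic input (Kronecker density in the torus coming from the $\Q$‑linear independence of the spectral gaps), but the construction of $v_k$ and the averaging mechanism are not the same. The paper first reduces to the case where $v$ is piecewise constant, using that $v\mapsto\int_0^{\cdot}M$ is $L^\infty$‑continuous; then, on each interval where $v\equiv w$, it builds a staircase $v_k$ whose values hop between two Kronecker‑dense sequences $(w^{(m)})$ and $(z^{(m)})$, the first reproducing the conjugated matrix $e^{-wA^{(N)}}B^{(N)}e^{wA^{(N)}}$ and the second flipping the sign of its off‑diagonal blocks; time‑averaging the two cancels the off‑diagonal blocks and yields $M(t)$. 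You instead keep $v$ as it is and graft on, at equally spaced instants, short excursions that each add the same increment $\Delta_k$ to $v_k$, where $\Delta_k$ is chosen by Kronecker to be a near‑period (at rate $o(1/k)$) of all the ``diagonal‑block'' frequencies $\omega_{jl}$ (which are $\Z$‑combinations of the gaps $\gamma_m$, $m\ne n$) and a near‑antiperiod of the ``off‑diagonal‑block'' ones (which contain $\pm\gamma_n$). Diagonal blocks then produce Riemann sums converging to $\int_0^tM$, and off‑diagonal blocks produce oscillatory sums with uniformly bounded partial sums, which vanish by Abel summation because $v$ is Lipschitz. What your route buys is that you avoid both the preliminary discretization of $v$ and the need for two interleaved dense sequences; you pay for it with a slightly heavier phase bookkeeping (the Abel summation step and the $o(1/k)$ recurrence rate). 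Two small points worth making explicit in a final write‑up: (a) for the per‑interval tracking error to be $O(1/k^2)$ you need the excursion length $\eta_k=\Delta_k/R_k$ to be $O(1/k^2)$, which you can arrange by taking $R_k$ large enough for each $k$; and (b) the Abel step really needs $v$ Lipschitz, not merely uniformly continuous — which does hold here because $v(t)=\int_0^t u$ with $u$ a fixed, bounded piecewise constant function.
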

\proof
We will prove the claim taking $v(\cdot)$ piecewise constant, since every piecewise affine function can be approximated arbitrarily well 
in the $L^\infty$ 
topology 
by piecewise constant functions  and 
because the map  
associating to $v(\cdot)$  the curve  
$t\mt \int_0^t M(\tau)d\tau$
is continuous with respect to the $L^\infty$ topology (taken both in 
its domain and its codomain).

Assume that $v(\cdot)$ is constantly equal to $w\in\R$ on $[t_1,t_2]$.
Since 
$\lb_2-\lb_1,\dots,\lb_N-\lb_{N-1}$  are $\Q$-linearly independent,
then for every $s_0\in\R$ the 
curve 
$$(s_0,\infty)\ni s\mt ((\lb_1-\lb_2)s,\dots,(\lb_1-\lb_{N})s)$$
projects onto a dense subset of the torus $\R^{N-1}/2\pi\Z^{N-1}$. 
Thus, there exist two sequences $w^{(m)}\nearrow+\infty$ and $z^{(m)}\nearrow+\infty$
such that
\newcommand{\mmod}{\mathrm{mod}}
\[
\ba{rclll}
 (\lambda_1-\lambda_j)w^{(m)}\ (\mmod~2 \pi) &\longrightarrow&
(\lambda_1-\lambda_j) w\ (\mmod~2 \pi)&&\mbox{for }2\leq j \leq N,\\
(\lambda_1-\lambda_j)z^{(m)}\ (\mmod~2 \pi)&\longrightarrow& (\lambda_1-\lambda_j) w\ (\mmod~2 \pi)&& \mbox{for }2\leq j \leq n,\\
(\lambda_1-\lambda_j)z^{(m)}\ (\mmod~2 \pi)&\longrightarrow&(\lambda_1-\lambda_j) w + \pi\ (\mmod~2 \pi)&&\mbox{for }n+1 \leq j \leq N,
\ea
\]
 as $m$ tends to infinity.
In particular the sequence of matrices $e^{-w^{(m)} A^{(N)}} B^{(N)} e^{w^{(m)}A^{(N)}}$ converges to
$e^{-wA^{(N)}} B^{(N)} e^{wA^{(N)}}$ as $m$ goes to infinity, while the sequence $e^{-z^{(m)} A^{(N)}} B^{(N)} e^{z^{(m)}A^{(N)}}$ converges, following the notations 
introduced in
\eqref{blocks},  to 
$$\lp\ba{cc} M^{(n,n)} & -M^{(n,N-n)}\\ -M^{(N-n,n)} & M^{(N-n,N-n)}\ea\rp,$$
where we dropped the dependence on $t$ of the different sub-matrices since  $v$ is constant on $[t_1,t_2]$. 

Fix $\bar\delta>\delta$. Consider a sequence $(\bar v_k)_{k\in\N}$ in $\R_+$ (whose role will be clarified later)
and define, for every $k\in\N$, a finite increasing  sequence $(\theta^{k}_l)_{l=0,\ldots,k}$ with $\theta^{k}_0=\bar v_k\,$, $\theta^{k}_{l+1}\geq\theta^{k}_l+\bar\delta (t_2-t_1)/k^2$ for $0\leq l<k$, and such that, for $l>0$, 
$\theta^{k}_l$ belongs to $(w^{(m)})_{m\in\N}$ 
if $l$ is odd and to $(z^{(m)})_{m\in\N}$ if $l$ is even.
Define, moreover, 
$$\tau_j=t_1+\frac{j-1}k(t_2-t_1),$$
for $j=1,\dots,k+1$.

Consider the 
continuous
function $v_{k}$ uniquely defined on $[t_1,t_2]$ by the conditions
$$
\left\{\ba{rcll}
v_{k}(t_1)&=&\bar v_k,&\\
v_{k}(\tau_i+\frac{t_2-t_1}{k^2})&=&\theta^k_i&\mbox{for }i=1,\dots,k,\\
\dot{v}_k(t)&=&\bar\delta&\mbox{if }t\in\cup_{i=1}^k (\tau_i+\frac{t_2-t_1}{k^2},\tau_{i+1}),
\\
\ddot v_k(t)&=&0&\mbox{if }t\in\cup_{i=1}^k (\tau_i,\tau_i+\frac{t_2-t_1}{k^2}).\ea\right.
$$

\begin{figure}
\begin{center}
\input{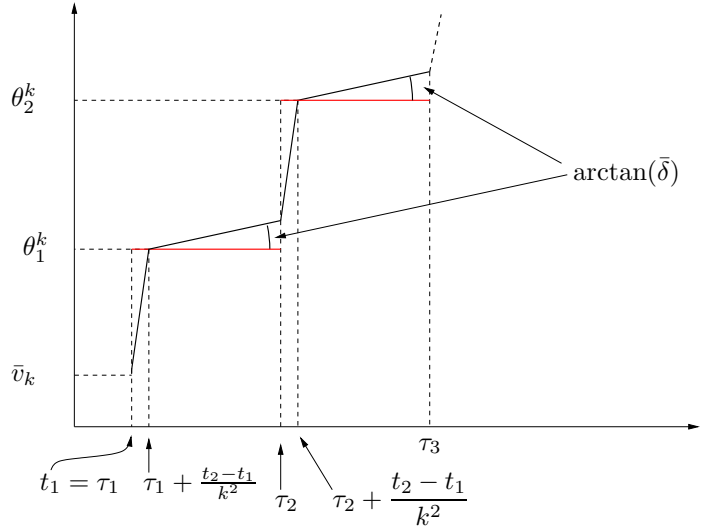}
\caption{The functions $v_k$ and $\tilde v_k$}
\end{center}
\end{figure}
(See  Fig.~1.)
Define $\tilde v_k$ as the piecewise constant function that coincides with $\theta^{k}_i$ on $[\tau_i,\tau_{i+1}]$. On each interval $[\tau_i+ (t_2-t_1)/{k^2},\tau_{i+1}]$ the difference between $v_{k}$ and $\tilde v_{k}$ is bounded in absolute value by $\bar\delta (t_2-t_1)/k $.
Therefore, 
$$\sup\left\{\left\|e^{-v_{k}(t)A^{(N)}} B^{(N)} e^{v_{k}(t)A^{(N)}}-e^{-\tilde v_{k}(t)A^{(N)}} B^{(N)} e^{\tilde v_{k}(t)A^{(N)}}\right\|\mid t\in\cup_{i=1}^k \left[\tau_i+\frac{t_2-t_1}{k^2},\tau_{i+1}\right]\right\}
$$
goes to zero 
as $k$ goes to infinity.

Since $\|e^{-\nu A^{(N)}} B^{(N)} e^{\nu A^{(N)}}\|$ is uniformly bounded with respect to $\nu\in \R$ and the measure of  $\ \cup_i [\tau_i,\tau_i+\frac{T}{k^2}]\ $ goes to $0$ as $k$ goes to infinity, we have
$$\int_{t_1}^t \big( e^{-v_{k}(\tau)A^{(N)}} B^{(N)} e^{v_{k}(\tau)A^{(N)}} - e^{-\tilde v_{k}(\tau)A^{(N)}} B^{(N)} e^{\tilde v_{k}(\tau)A^{(N)}}\big) d\tau \stackrel{k\to\infty}{\longrightarrow}0\quad\mbox{ uniformly on }[t_1,t_2]\,.$$
Moreover, by definition of 
the sequences  
$\theta^{k}_i$, $w^{(m)}$, and $z^{(m)}$, one has  
$$\int_{t_1}^t e^{-\tilde v_{k}(\tau)A^{(N)}} B^{(N)} e^{\tilde v_{k}(\tau)A^{(N)}} d\tau \stackrel{k\to\infty}{\longrightarrow}\int_{t_1}^t M(\tau)d\tau\quad\mbox{ uniformly on }[t_1,t_2]\,,$$
and therefore $e^{-v_{k}(t)A^{(N)}} B^{(N)} e^{v_{k}(t)A^{(N)}}$ converges in integral sense to $M(t)$ on $[t_1,t_2]$.


Finally, 
construct $u_k$ as follows: 
for $t_1=0$ define 
$u_k$ on $(t_1,t_2)$
as the derivative of $v_k$ (defined almost everywhere), 
where the 
$v_k$'s correspond to the sequence of initial conditions  
$\bar v_k=0$ for every $k\in\N$. 
Then, on the second interval on which $v(\cdot)$ is constant,  use as a new set of initial conditions 
for the approximation procedure 
the values
$\bar v_k=v_k(t_2)$ and define again  $u_k$ 
as the derivative of $v_k$. 
Iterating the procedure on the finite set of intervals covering $[0,T]$ 
on which $v(\cdot)$ is constant we obtain the required approximating sequence of piecewise constant control functions. 
\EOP

\subsection{Approximate controllability for the infinite-dimensional system}\label{china}
Let $u_k$ and $M_k$ be defined as in Claim~\ref{roba}. 
The resolvent $R_k(t,s):\C^N\to \C^N$ of the linear time-varying equation
$$\dot Y=M_k(t) Y,$$
converges, uniformly with respect to $(t,s)$, to the resolvent $R(t,s):\C^N\to \C^N$ of
$$\dot Y=M(t) Y.$$
(See, for instance, \cite[Lemma 8.10]{book2}.) Notice that $R(t,s)$ preserves the norms of both the vector formed by the first $n$ coordinates and the one formed by the last $N-n$.

Let, for every $k\in\N$,  $\psi^k$ be the solution of \r{timed} corresponding to $u_k$. We have the following approximation property. 

\definecolor{rosso}{rgb}{1,0,0}

\newcommand{\rosso}[1]{{\bf {\color{rosso} #1}}}

\begin{claim}\label{mm}
For $k$ large enough,
 \be
 \|\Pi_n(e^{- v_k(T)A}\psi^k(T))-\Pi_n(e^{- v(T)A}\psi_1)\|<2\eps,\label{2eps}
 \ee
 where $\eps$ is the positive constant which has been fixed at the beginning of Section~\ref{lifting}.
 \end{claim}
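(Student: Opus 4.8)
The plan is to reduce everything to the finite-dimensional convergence result established in Claim~\ref{roba} together with the continuous dependence of resolvents on the time-dependent coefficients. Fix the data $\delta,\eps>0$, $\psi_0,\psi_1\in\s$, the integer $n$ satisfying \r{newequation}, the control $u:[0,T]\to(\delta,\infty)$ steering $\xi_0/\|\xi_0\|$ to $\xi_1/\|\xi_1\|$ in $(\Sigma_n)$, the integer $N\geq n$ satisfying \r{mubastardo}, and the approximating controls $u_k$ from Claim~\ref{roba}. The key observation is that, by the gauge transformation $Y=e^{-v_k(t)A^{(N)}}X$ recorded just before \r{blocks}, the $N$-dimensional truncation $\PPi_N\psi^k(t)$ equals $e^{v_k(t)A^{(N)}}Y_k(t)$ where $Y_k$ solves $(\Theta_N)$ with coefficient matrix $M_k$; hence $\PPi_N(e^{-v_k(T)A}\psi^k(T)) = R_k(T,0)\,\xi_0'$ up to the error coming from the truncation from $\H$ to $\C^N$, where $\xi_0' = \PPi_N\psi_0$. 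Here I use that $e^{-v_k(T)A}$ commutes with $\Pi_n$ and with $\Pi_N$, since $A$ is diagonal in the basis $(\phi_m)$.

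First I would control the truncation error: the difference between $\psi^k$ and the solution of the $N$-dimensional system $(\Sigma_N)$ with the same control, both started at $\Pi_N\psi_0$, is governed on the first $n$ coordinates by the off-block couplings $b_{jk}$ with $k>N$, which are small by \r{mubastardo}. A Grönwall-type estimate on \r{very_weak} projected onto $\phi_1,\dots,\phi_n$ shows that, uniformly on $[0,T]$, $\|\Pi_n\psi^k(t) - \Pi_n(\text{truncated solution})\|$ is bounded by a quantity that tends to $0$ with $\mu$ (for instance of order $\sqrt{\mu}\,e^{CT}$ with $C$ depending on the uniform bound of $\|B^{(N)}\|$). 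This is where the constant $\mu$, promised to be fixed ``small enough depending on $T$, $n$, $\eps$'' in Section~\ref{lifting}, is finally pinned down: choose $\mu$ so that this error is below $\eps/2$.

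Next I would invoke the resolvent convergence: by \cite[Lemma 8.10]{book2}, $R_k(T,0)\to R(T,0)$ as $k\to\infty$, where $R(t,s)$ is the resolvent of $\dot Y = M(t)Y$ with the block-diagonal $M$. Because $M$ is block-diagonal, $R(T,0)$ maps $\C^n\times\C^{N-n}$ to itself block-diagonally, and on the first block it coincides exactly with the resolvent of the gauge-transformed $(\Sigma_n)$; therefore $e^{v(T)A^{(n)}}\big(R(T,0)\xi_0'\big)$ restricted to the first $n$ coordinates equals the $n$-dimensional propagated state $\PPi_n\psi^{(n)}(T)$ driven by $u$, which by the choice of $u$ is $\|\xi_1\|^{-1}$ times... — more simply, it is the point $\PPi_n$ of a unit vector whose first $n$ coordinates agree with $\xi_1/\|\xi_1\|$, hence within $O(\eps)$ of $\PPi_n\psi_1$ after undoing the normalization using \r{newequation}. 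Combining the two estimates: for $k$ large, $\|\Pi_n(e^{-v_k(T)A}\psi^k(T)) - \Pi_n(e^{-v(T)A}\psi_1)\|$ is at most $\eps/2$ (truncation) plus $\eps/2$ (resolvent convergence plus the $\eps$-closeness of $\psi_0,\psi_1$ to their truncations), giving the bound $2\eps$.

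The main obstacle I expect is the truncation estimate in the second paragraph: one must show that the ``leakage'' of the infinite-dimensional dynamics into coordinates beyond $N$ does not spoil the first $n$ coordinates, uniformly in $k$, and the control functions $u_k$ are unbounded in $L^\infty$ as $k\to\infty$ (they take values in $(\delta,\infty)$ with no upper bound). So a naive Grönwall bound in terms of $\|u_k A^{(N)}+B^{(N)}\|$ is useless. The fix is that the $A$-part is skew-adjoint and \emph{diagonal}, hence after the gauge transformation $Y_k = e^{-v_k(t)A^{(N)}}X_k$ the dynamics is driven purely by $M_k(t) = e^{-v_k(t)A^{(N)}}B^{(N)}e^{v_k(t)A^{(N)}}$, whose norm is bounded by $\|B^{(N)}\|$ uniformly in $k$ and $t$; the same conjugation applied to the full infinite-dimensional evolution isolates the bad term as a conjugate of the off-block part of $B$, whose operator norm on the relevant subspace is $O(\sqrt{\mu})$ by \r{mubastardo}. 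Thus the estimate must be carried out at the level of the gauge-transformed equation, exactly as $(\Theta_N)$ suggests, rather than on \r{timed} directly.
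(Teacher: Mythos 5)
Your overall plan --- gauge transform $q^k(t)=e^{-v_k(t)A}\psi^k(t)$, separate the first $n$ coordinates from the rest, and invoke the resolvent convergence $R_k\to R$ of Claim~\ref{roba} --- is the paper's, and you correctly diagnose that the unboundedness of $u_k$ forces the estimate to be carried out in the gauge-transformed picture, where the effective generator $M_k(t)$ has norm bounded by $\|B^{(N)}\|$ uniformly. The gap is your ``Gr\"onwall-type'' bound on the truncation error. Let $d^k$ be the difference between the first $N$ coordinates of $q^k$ and the solution of $(\Theta_N)$ starting from $\PPi_N\psi_0$; then $\dot d^k = M_k(t)d^k + F^k(t)$ with $d^k(0)=0$, and the forcing $F^k=(H^k,I^k)$ is small \emph{only in its first $n$ components}: $\|H^k\|_\infty<\sqrt{n\mu}$ by \r{mubastardo}, but $\|I^k\|_\infty<C(N)$ with $C(N)$ not small. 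No Gr\"onwall estimate then gives $\|\PPi_n^N d^k(T)\|=O(\sqrt{\mu})$. A Gr\"onwall bound on $\|d^k\|$ is dominated by $I^k$ and gives only $O(C(N)T)$; a Gr\"onwall bound on $\|\PPi_n^N d^k\|$ alone is blocked because the off-diagonal block of $M_k(t)$ coupling coordinates $n+1,\dots,N$ into the first $n$ is not pointwise small for finite $k$ --- only its time integral converges by Claim~\ref{roba} --- so the differential inequality does not close.

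The paper avoids Gr\"onwall altogether: it writes $d^k(T)=\int_0^T R_k(s,T)F^k(s)\,ds$ by variation of constants, projects onto the first $n$ coordinates, and exploits that the \emph{limiting} resolvent $R$ is block-diagonal, so that $\PPi_n^N R(s,T)F^k(s)$ equals the (unitary) upper-left block of $R(s,T)$ applied to $H^k(s)$ and hence has norm $\leq\sqrt{n\mu}$; consequently $\|L^k\|_\infty<2T\sqrt{n\mu}$ once $R_k$ is close enough to $R$, and the large component $I^k$ never contaminates the first $n$ coordinates in the limit. This is precisely the ingredient you postpone to a separate ``resolvent convergence'' step, but it is already indispensable for the truncation error itself; the two steps cannot be decoupled as your outline suggests. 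Once you replace the Gr\"onwall estimate by the Duhamel-plus-block-diagonal-$R$ argument, the remainder of your sketch coincides with the paper's proof, including the final triangle inequality where the normalization factors $\|\xi_0\|,\|\xi_1\|$ are handled via \r{newequation}.
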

 \begin{proof} 
Define $ q^k(t)=e^{-i v_k(t) A}\, \psi^k(t) $.
According to \r{very_weak} (more precisely, its counterpart for equation \r{timed}), the components 
$q^k_j(t)=e^{-i \lb_j v_k(t)}\la \psi^k(t),\phi_j\ra$ 
of $q^k(t)$ with respect to the basis of eigenvectors of $A$ 
satisfy for almost every $t\in[0,T]$
\be\label{wind}
\dot q_j^k(t)=\sum_{l=1}^\infty b_{jl}e^{i(\lb_l-\lb_j)v_k(t)}q_l^k(t).
\ee
Therefore, the curves $P^k(t)=(q^k_1(t),\dots,q^k_n(t))^T$ and
$Q^k(t)=(q^k_{n+1}(t),\dots,q^k_N(t))^T$ satisfy
\[
\lp\ba{c} {\dot P}^k(t)\\{\dot Q}^k(t)\ea\rp=M_k(t)\lp\ba{c} P^k(t)\\Q^k(t)\ea\rp+\lp\ba{c} H^k(t)\\I^k(t)\ea\rp 
\]
with $\|H^k\|_\infty <\sqrt{n\mu}$ (see \r{mubastardo}) and $\|I^k\|_\infty < C$ for $C=C(N)$ large enough.

Hence
\[
\lp\ba{c} {P}^k(t)\\{Q}^k(t)\ea\rp=R_k(t,0)\PPi_N(\psi_0)+
\int_0^t R_k(s,t) \lp\ba{c} H^k(s)\\I^k(s)\ea\rp ds. 
\]
Denote by $\PPi_n^N$ the projection of $\C^N$ on its first $n$ coordinates and let
$$L^k(t)=\PPi_n^N\lp\int_0^t R_k(s,t) \lp\ba{c} H^k(s)\\I^k(s)\ea\rp ds\rp.$$
Since $R_k$ converges uniformly to $R$ and 
the latter
preserves the norm of the first $n$ components,
we know that, for $k$ large, $\|L^k\|_\infty< 2 T \sqrt{n\mu}$.
Moreover,
$R_k(t,0)\PPi_N(\psi_0)$ converges uniformly to 
$R(t,0)\PPi_N(\psi_0)$.
In particular, according to the definition of $R$, 
$\PPi_n^N(R_k(t,0)\PPi_N(\psi_0))$ converges uniformly
to the solution of ($\Theta_n$) corresponding to the control $u$ and starting from $\PPi_n(\psi_0)=\xi_0$. 

Since $u$ drives system $(\Sigma_n)$ from 
$\xi_0/\|\xi_0\|$ to $\xi_1/\|\xi_1\|$, 
then it steers system ($\Theta_n$) from 
$\xi_0$ to $e^{-v(T)A^{(n)}}\xi_1(\|\xi_0\|/\|\xi_1\|)$.
 Therefore, 
 $$\left\|P^k(T)-e^{-v(T)A^{(n)}}\xi_1\frac{\|\xi_0\|}{\|\xi_1\|}\right\|
 <3 T \sqrt{n\mu},$$ 
 if $k$ is large enough. 
 Let us fix $\mu$ small enough in order to have 
 $$3 T \sqrt{n\mu}<\eps.$$
Then, 
\brs
\nonumber\|\Pi_n(e^{- v_k(T)A}\psi^k(T))-\Pi_n(e^{- v(T)A}\psi_1)\|
&\leq&
\left\|\Pi_n(e^{- v_k(T)A}\psi^k(T))-\Pi_n(e^{- v(T)A}\psi_1)\frac{\|\xi_0\|}{\|\xi_1\|}\right\|\\
\nonumber&&\;+\left \|\Pi_n(e^{- v(T)A}\psi_1)\frac{\|\xi_0\|}{\|\xi_1\|}-\Pi_n(e^{- v(T)A}\psi_1)\right\|\\
\nonumber&=&\left\|P^k(T)-e^{-v(T)A^{(n)}}\xi_1\frac{\|\xi_0\|}{\|\xi_1\|}\right\|\\
\nonumber&&\;+\|\Pi_n(e^{- v(T)A}\psi_1)\|\frac{|\, \|\xi_1\|-\|\xi_0\|\,|}{\|\xi_1\|}\\
\nonumber&<&3 T \sqrt{n\mu}+|\, \|\xi_1\|-\|\xi_0\|\,|\\
&<&
2\eeta,
\ers
provided that $k$ is large enough. 
\end{proof}
As a consequence of Proposition~\ref{mm}, 
for $k$ large enough the moduli of the first $n$ components of 
$\psi^k(T)$ are close to those of the first $n$ components of $\psi_1$. 
The proposition  below 
will be used to show
that their phases 
can also 
be made as close as required by applying a
suitable control on an arbitrarily small time interval.

\newcommand{\tlb}{\tilde \lb}
\newcommand{\tA}{\tilde A}
\newcommand{\tB}{\tilde B}
\newcommand{\tde}{\tilde \delta}
\newcommand{\teps}{\tilde \eps}

\begin{prop}\label{phase}
For every $\teps>0$, every $\tlb_1,\dots,\tlb_n\in \R$, and every 
$s_1\in\R$ 
there exist $s_2>s_1$ and $w\in \R^n$ with $\|w\|\leq\teps$ such that $\tlb_i s_2\equiv w_i\mod{2\pi}$ for every $i=1,\ldots,n$. 
As a consequence, 
given a 
 skew-adjoint  \diag\  control system  $(\tA,\tB,(0,\tde))$,  
for every $v_1\in\R$ and every 
 $\tau>0$ small enough
there exists  a 
constant
control function $\tilde u:[0,\tau]\to (\tde,+\infty)$ 
such that every  trajectory $\tilde\psi(\cdot)$ 
of $\dot \psi=u \tA \psi+\tB\psi$ corresponding to $\tilde u(\cdot)$ satisfies $\|\Pi_n(\tilde\psi(\tau))-\Pi_n(e^{v_1 \tA}\tilde\psi(0))\|\leq \teps$ (where $\Pi_n$ denotes the orthogonal projection on the space spanned by the first $n$ eigenvectors of $\tA$).
\end{prop}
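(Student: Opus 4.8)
The plan is to establish the two assertions in turn: the first is a purely Diophantine recurrence statement, requiring notably \emph{no} non-resonance hypothesis on the $\tlb_i$, and the second then follows from it by an elementary perturbation estimate performed in the eigenbasis of $\tA$.

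For the first assertion I would read it as recurrence near the origin of the linear flow $s\mapsto(\tlb_1 s,\dots,\tlb_n s)$ on the torus $\R^n/2\pi\Z^n$. Fix $s^\star>\max(s_1,0)$ and let $T$ be the translation of $\R^n/2\pi\Z^n$ by the class of $(\tlb_1 s^\star,\dots,\tlb_n s^\star)$, so that $T^k(0)$ is the class of $(\tlb_1 k s^\star,\dots,\tlb_n k s^\star)$. By compactness of the torus there is a subsequence $T^{n_k}(0)$ that converges; since $T$ is a translation, $m_j:=n_{2j}-n_j$ satisfies $m_j\to+\infty$ and $T^{m_j}(0)=T^{n_{2j}}(0)-T^{n_j}(0)\to 0$. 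Hence for $j$ large, $s_2:=m_j s^\star$ exceeds $s_1$ and the representative $w\in[-\pi,\pi)^n$ of $T^{m_j}(0)$ has $\|w\|\leq\teps$, while by construction $\tlb_i s_2\equiv w_i\pmod{2\pi}$ for every $i$; this is the first assertion.

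For the consequence I would take $\tilde u$ constantly equal to some $u_0>\tde$ on $[0,\tau]$, so that the trajectory issued from an arbitrary $\tilde\psi(0)\in\s$ is $\tilde\psi(\tau)=e^{\tau(u_0\tA+\tB)}\tilde\psi(0)$, and I would control its first $n$ coordinates $c_j(t)=\la\tilde\psi(t),\phi_j\ra$. By \r{very_weak} applied to $\dot\psi=u\tA\psi+\tB\psi$ — legitimate for $j\leq n$ since $\phi_j\in D(\tB)$ forces $(\la\tB\phi_j,\phi_l\ra)_l\in\ell^2$ — one has $\dot c_j=iu_0\tlb_j c_j-\sum_l\overline{\la\tB\phi_j,\phi_l\ra}c_l$. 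Passing to $d_j(t)=e^{-iu_0\tlb_j t}c_j(t)$ removes the diagonal term, yielding $\dot d_j(t)=-\sum_l\overline{\la\tB\phi_j,\phi_l\ra}e^{iu_0(\tlb_l-\tlb_j)t}d_l(t)$; by Cauchy--Schwarz and since $\sum_l|d_l(t)|^2=\|\tilde\psi(t)\|^2=\|\tilde\psi(0)\|^2$ is conserved along the unitary flow, $|\dot d_j(t)|\leq\|\tB\phi_j\|\,\|\tilde\psi(0)\|$, whence $|d_j(\tau)-d_j(0)|\leq\tau\|\tB\phi_j\|$ on $\s$. As $c_j(\tau)=e^{iu_0\tlb_j\tau}d_j(\tau)$ and $\la e^{v_1\tA}\tilde\psi(0),\phi_j\ra=e^{iv_1\tlb_j}d_j(0)$, the $j$-th component of $\tilde\psi(\tau)-e^{v_1\tA}\tilde\psi(0)$ has modulus at most $|d_j(\tau)-d_j(0)|+|e^{i\tlb_j(u_0\tau-v_1)}-1|\,|c_j(0)|$; squaring and summing over $j=1,\dots,n$, using $\sum_{j\leq n}|c_j(0)|^2\leq1$, gives uniformly in $\tilde\psi(0)\in\s$
\[
\|\Pi_n(\tilde\psi(\tau)-e^{v_1\tA}\tilde\psi(0))\|^2\leq 2\tau^2\sum_{j=1}^n\|\tB\phi_j\|^2+2\max_{1\leq j\leq n}\big|e^{i\tlb_j(u_0\tau-v_1)}-1\big|^2 .
\]

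To conclude, given $\teps>0$ I would fix $\tau_0>0$ with $2\tau_0^2\sum_{j\leq n}\|\tB\phi_j\|^2<\teps^2/2$; then for any $\tau\in(0,\tau_0)$ I would invoke the first assertion with the eigenvalue parameters $\tlb_1,\dots,\tlb_n$ of $\tA$, tolerance $\teps/2$, and $s_1=\tde\tau-v_1$, obtaining $s_2>s_1$ and $w$ with $\|w\|\leq\teps/2$ and $\tlb_j s_2\equiv w_j\pmod{2\pi}$, and set $u_0:=(v_1+s_2)/\tau$. Then $u_0>\tde$ precisely because $s_2>\tde\tau-v_1$, and $u_0\tau-v_1=s_2$, so $|e^{i\tlb_j(u_0\tau-v_1)}-1|=|e^{iw_j}-1|\leq\|w\|\leq\teps/2$ and the right-hand side above is $<\teps^2$, as required. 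The main obstacle I anticipate is not conceptual but a matter of careful bookkeeping in this last step: keeping the estimate uniform over all $\tilde\psi(0)\in\s$ — which is why one argues through \r{very_weak} and norm conservation, and which incidentally makes the possible unboundedness of $\tB$ harmless, only $\|\tB\phi_j\|$ for $j\leq n$ ever entering — and checking that ``$\tau$ small'' is consistent with ``$u_0>\tde$'', which it is because the first assertion lets $s_2$, hence $u_0=(v_1+s_2)/\tau$, be taken as large as needed.
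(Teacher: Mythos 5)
Your proof is correct and follows essentially the same route as the paper: pass to the rotating frame $d_j(t)=e^{-iu_0\tlb_j t}c_j(t)$ (the paper's $\tilde q$), use \r{very_weak} together with $\phi_j\in D(\tB)$ and conservation of the norm to bound $|\dot d_j|\le\|\tB\phi_j\|$, and then tune the residual phase $e^{i\tlb_j(u_0\tau-v_1)}$ via recurrence of the linear flow on the torus, choosing $u_0$ large enough that $u_0>\tde$. The single departure is in the torus-recurrence step, where you argue by elementary compactness of $\R^n/2\pi\Z^n$ (a converging subsequence of $T^k(0)$ forces arbitrarily large $m$ with $T^m(0)$ near $0$) instead of invoking the Poincar\'e recurrence theorem as the paper does; this is a modest gain in self-containedness but not a different strategy.
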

\proof\ 
The first part of the 
statement is a simple application of the Poincar\'e recurrence theorem.
Indeed, since the dynamics $s\mapsto x_0+s(\tlb_1,\dots,\tlb_n)$ on the $n$-dimensional torus preserve volumes and distances,
then the constant vector field $(\tlb_1,\dots,\tlb_n)$ is recurrent
at
every point of the torus, and in particular at the origin $x_0=0$.   
Therefore any neighborhood ${\cal N}$ of the origin is sent, after a suitably long time (which can be assumed to be larger than $s_1$), to another neighborhood of the origin 
isometric to ${\cal N}$.
Taking ${\cal N}$ equal to the ball of radius $\teps$ centered at the origin,  
the first part of the claim is proven.

In order to conclude the proof, 
fix a piecewise constant
control function $\tilde u:[0,\tau]\to (\tde,+\infty)$
and a solution $\tilde\psi$ of $\dot \psi=u \tA \psi+\tB\psi$
corresponding to $\tilde u$. Set  
$$\tilde q(t)=e^{-\int_0^t \tilde u(s)ds \tA}\tilde\psi(t)$$
and notice that, according to \r{very_weak},
$|\dot {\tilde q}_j(t)|\leq \|\tB\tilde\phi_j\|$
 for every $j\in\N$, where $\tilde\phi_j$ denotes the $j$-th eigenvector of $\tA$ and ${\tilde q}_j=\la {\tilde q},\tilde\phi_j\ra$.
 Therefore, $\|\Pi_n(\tilde q(\tau))-\Pi_n(\tilde q(0))\|\leq C\tau$ for some positive constant $C$ independent of $\tilde u$ and of $\tilde \psi$. 
Then
\brs
\|\Pi_n(\tilde\psi(\tau))-\Pi_n(e^{v_1 \tA}\tilde\psi(0))\|&=&\|\Pi_n(e^{-v_1 \tA}\tilde\psi(\tau))-\Pi_n(\tilde\psi(0))\|\\
&=&\|\Pi_n(e^{\lp \int_0^\tau\tilde u(t)dt-v_1\rp \tA}\tilde q(\tau))-\Pi_n(\tilde q(0))\|\\
&\leq&\|\Pi_n(e^{\lp \int_0^\tau\tilde u(t)dt-v_1\rp \tA}\tilde q(\tau))-\Pi_n(\tilde q(\tau))\|+C\tau.
\ers
Fix $\tau<\teps/(2C)$ so that
\be\label{bracca}
\|\Pi_n(\tilde\psi(\tau))-\Pi_n(e^{v_1 \tA}\tilde\psi(0))\|\leq \|\Pi_n(e^{\lp \int_0^\tau\tilde u(t)dt-v_1\rp \tA}\tilde q(\tau))-\Pi_n(\tilde q(\tau))\|+\frac\teps2.
\ee 
Notice that 
$$\Pi_n(e^{\lp \int_0^\tau\tilde u(t)dt-v_1\rp \tA}\tilde q(\tau))
=
\mathrm{diag}\lp e^{i\tlb_1\lp \int_0^\tau\tilde u(t)dt-v_1\rp},\dots,e^{i\tlb_n\lp \int_0^\tau\tilde u(t)dt-v_1\rp}\rp
\Pi_n(\tilde q(\tau)).$$
The first part of the claim ensures the existence of $v_2$ arbitrarily 
large 
such that if  
\be\label{stracca}
\int_0^\tau\tilde u(t)dt-v_1=v_2
\ee 
then the norm of the matrix 
$$\mathrm{diag}\lp e^{i\tlb_1\lp \int_0^\tau\tilde u(t)dt-v_1\rp},\dots,e^{i\tlb_n\lp \int_0^\tau\tilde u(t)dt-v_1\rp}\rp-\mathrm{Id}_{n}$$
is smaller than $\teps/2$. 
Take $v_2$ 
large enough to satisfy
$(v_1+v_2)/\tau>\tde$. Then $\tilde u\equiv (v_1+v_2)/\tau$
satisfies \r{stracca} and, because of \r{bracca}, 
$$
\|\Pi_n(\tilde\psi(\tau))-\Pi_n(e^{v_1 \tA}\tilde\psi(0))\|\leq \teps,
$$ 
 independently of $\tilde\psi$. \hfill$\Box$ 

To conclude the proof of Theorem~\ref{main} 
we extend the interval of definition of 
the control function 
$u_k$ introduced above
by taking $u_k(T+t)=\tilde u(t)$ for $t\in[0,\tau]$ where 
$\tilde u$ is the control 
obtained by applying Proposition~\ref{phase} with
$v_1=v(T)-v_k(T)$, $\tA=A$, $\tB=B$, $\teps=\eps$.
Then, for $k$ large enough,  the corresponding trajectory
$\psi^k:[0,T+\tau]\to {\s}$ satisfies
$\|\Pi_n(\psi^k(T+\tau))-\Pi_n (\psi_1)\|<3\eps$
and therefore, due to (\ref{newequation}), 
$$\|\Pi_n(\psi^k(T+\tau))-\psi_1\|<4\eps.$$
It is now enough to show that
$\|\psi^k(T+\tau)-\Pi_n(\psi^k(T+\tau))\|$ can be made arbitrarily  small  by choosing a suitably small $\eps$. To this aim we 
notice that the inequality 
$\|\Pi_n(\psi^k(T+\tau))\|>1-4\eps$ implies, for $\eps<1/4$, that
 $\|\psi^k(T+\tau)-\Pi_n(\psi^k(T+\tau))\|^2<1-(1-4\eps)^2=8\eps-16\eps^2$
and this concludes the proof of Theorem~\ref{main}.
 \EOOP


\subsection{Lower bound on the steering time}
\label{lowb}

In  this section we prove a lower bound on the steering time for a 
skew-adjoint discrete-spectrum control system
without assuming that it satisfies the hypotheses of Theorem~\ref{main} nor
any other controllability assumption.

\begin{prop}\label{bornetime}
Let $(A,B,(0,\delta))$ be a skew-adjoint discrete-spectrum control 
system.  
Fix $\psi_{0},\psi_{1}$ in $\s$ and $\eps>0$. 
Then if a piecewise constant control $u:[0,T_u] \rightarrow (0,\delta)$
steers system (\ref{main_EQ})  from $\psi_0$ to an $\eps$-neighborhood of $\psi_1$, then
\begin{equation} \label{eq-minor-time}
T_u \geq \frac{1}{\delta} \mbox{sup}_{k \in \N} \frac{\big||\langle \phi_k,\psi_{0} \rangle| - |\langle \phi_k,\psi_{1} \rangle|\big| -\epsilon}{\| B \phi_k \|},
\end{equation}
where $(\phi_k)_{k\in \N}$ denotes the orthonormal basis of eigenvectors of $A$. 
\end{prop}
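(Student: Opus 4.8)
The plan is to control, for each fixed index $k\in\N$, the rate of change of the modulus of the $k$-th coordinate $x_k(t)=\langle\psi(t),\phi_k\rangle$ of the solution $\psi(\cdot)$ in the eigenbasis of $A$, exploiting the fact that the drift $A$ contributes nothing to this rate (it only rotates the phase of $x_k$) while the control term contributes at most $u(t)\|B\phi_k\|$. Since $u$ is piecewise constant, $\psi(\cdot)$ is the concatenation of finitely many strongly continuous unitary flows, so $x_k(\cdot)$ is continuous on $[0,T_u]$ and, on each interval where $u$ is constant, continuously differentiable; by (\ref{very_weak}), for a.e. $t$,
$$\dot x_k(t)=-\langle\psi(t),A\phi_k\rangle-u(t)\langle\psi(t),B\phi_k\rangle=i\lambda_k x_k(t)-u(t)\langle\psi(t),B\phi_k\rangle,$$
where we used $A\phi_k=i\lambda_k\phi_k$ with $\lambda_k\in\R$. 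Since $\phi_k\in D(A)\cap D(B)$, the right-hand side is bounded in norm by $|\lambda_k|+\delta\|B\phi_k\|$, so $x_k(\cdot)$ is in fact Lipschitz, hence absolutely continuous, on $[0,T_u]$.

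Next I would differentiate $|x_k(t)|^2=x_k(t)\overline{x_k(t)}$. The contribution of the term $i\lambda_k x_k(t)$ to $\tfrac{d}{dt}|x_k|^2=2\,\mathrm{Re}\big(\overline{x_k}\,\dot x_k\big)$ is $2\,\mathrm{Re}\big(i\lambda_k|x_k|^2\big)=0$, so
$$\frac{d}{dt}|x_k(t)|^2=-2u(t)\,\mathrm{Re}\big(\overline{x_k(t)}\,\langle\psi(t),B\phi_k\rangle\big),$$
and therefore, by the Cauchy--Schwarz inequality together with the fact that the flow preserves the unit sphere (so $\|\psi(t)\|=1$),
$$\Big|\frac{d}{dt}|x_k(t)|^2\Big|\le 2u(t)\,|x_k(t)|\,\|B\phi_k\|\qquad\text{for a.e. }t.$$
From this I would deduce that the absolutely continuous function $t\mapsto|x_k(t)|$ satisfies $\big|\tfrac{d}{dt}|x_k(t)|\big|\le u(t)\|B\phi_k\|$ for a.e. $t$: where $x_k(t)\ne0$ this follows upon dividing by $2|x_k(t)|$, while at a point of differentiability where $x_k(t)=0$ the derivative of the nonnegative function $|x_k|$ necessarily vanishes. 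Integrating over $[0,T_u]$ and using $u(t)\in(0,\delta)$ gives
$$\big|\,|x_k(T_u)|-|x_k(0)|\,\big|\le\|B\phi_k\|\int_0^{T_u}u(t)\,dt\le\delta\,T_u\,\|B\phi_k\|.$$

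To conclude, I would insert the boundary data: $|x_k(0)|=|\langle\phi_k,\psi_0\rangle|$, and since $\|\psi(T_u)-\psi_1\|<\eps$, the reverse triangle inequality and Cauchy--Schwarz yield $\big|\,|x_k(T_u)|-|\langle\phi_k,\psi_1\rangle|\,\big|\le\|\psi(T_u)-\psi_1\|<\eps$. Hence
$$\big|\,|\langle\phi_k,\psi_0\rangle|-|\langle\phi_k,\psi_1\rangle|\,\big|\le\big|\,|x_k(0)|-|x_k(T_u)|\,\big|+\big|\,|x_k(T_u)|-|\langle\phi_k,\psi_1\rangle|\,\big|<\delta\,T_u\,\|B\phi_k\|+\eps,$$
which rearranges into (\ref{eq-minor-time}) for the chosen $k$ (when $B\phi_k=0$ the left-hand side of this display is itself $<\eps$, so the corresponding numerator in the supremum is non-positive and the bound is trivial). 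Taking the supremum over $k\in\N$ gives the claim. The only point requiring some care is the a.e.\ differential inequality for $|x_k(\cdot)|$ across its zeros; besides the elementary argument above, one may alternatively work with the smooth functions $\sqrt{|x_k(t)|^2+\eta^2}$ and pass to the limit $\eta\to0^+$ at the end.
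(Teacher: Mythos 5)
Your proof is correct and follows essentially the same approach as the paper: bound the rate of change of the modulus of the $k$-th eigenbasis coordinate by the control times $\|B\phi_k\|$, integrate, and finish with the triangle inequality. The only cosmetic difference is that the paper explicitly performs the time reparametrization to the system \r{timed} (where the bound becomes the constant $\|B\phi_k\|$ over the reparametrized interval $T_\nu=\int_0^{T_u}u$), while you work directly in the original time variable and absorb $u(t)\le\delta$ into the integral — the two bookkeeping schemes yield identical estimates.
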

\begin{proof}
Fix an initial condition $\psi_{0}$ in $\s$, a piecewise constant control $u:[0,T_u] \rightarrow (0,\delta)$, and denote by $\psi^{u}:[0,T_u] \rightarrow \H$ the corresponding solution of the system (\ref{main_EQ}) satisfying $\psi^u(0)=\psi_{0}$.
 
Write $u$ as 
$u(t)=\sum_{j=0}^n  u_j \chi_{[t_j,t_{j+1})}(t)$ where $0=t_1<t_2<\cdots<t_{n+1}=T_u$ 
and $u_1,\ldots,u_n$ 
belong  
 to $(0,\delta)$. 
 In the spirit of Section~\ref{TR}, 
associate to $u$ the piecewise constant control $\nu:[0, T_\nu]\to \R$  given by $\nu(t)=\sum_{j=0}^n  \nu_j \chi_{[\tau_j,\tau_{j+1})}(t)$ with 
$\nu_j=1/{u_j}$ for all $j=1,\dots,n$ and 
$\tau_j$ defined by induction as
$\tau_1=0$,  
 $\tau_{j+1}=\tau_j + (t_{j+1}-t_j) u_j$ for $j\geq 1$.

Define $\psi^\nu:[0,T_\nu] \rightarrow \H$ as the solution of system (\ref{timed}) corresponding to $\nu$ and satisfying $\psi^\nu(0)=\psi_{0}$.    Define by $m_k=|\langle \psi^\nu , \phi_k \rangle |$ the modulus of the $k^{\mathrm{th}}$ coordinate of $\psi^\nu$.

By definition $m_k$ is absolutely continuous and 
equation (\ref{wind}) implies that 
$$\dot{m}_k \leq \sum_{j=1}^{\infty} |b_{jk}| m_j \leq \left(\sum_{j=1}^{\infty} |\langle B \phi_j, \phi_k \rangle|^2\right)^{1/2}= \| B \phi_k \|.$$ 
Applying the mean value theorem, one gets  
\begin{equation} \label{eq-major-composante} \big||\langle \psi^\nu(0),\phi_k \rangle| -|\langle \psi^\nu(T_\nu),\phi_k \rangle|\big|\leq T_\nu \| B \phi_k \|.
\end{equation} 
 
Notice that $T_\nu=\sum_{j=1}^{n} (t_{j+1}-t_j) u_j \leq (t_n-t_1) \delta = T_u \delta$, that is, \begin{equation} \label{eq-major-Tv} 
 T_u \geq \frac{1}{\delta} T_\nu. 
 \end{equation}
Since, by assumption,
$|\psi^\nu(T_\nu)-\psi_{1}|<\epsilon$, 
then  (\ref{eq-major-composante}) implies 
$$T_\nu \geq \mbox{sup}_{k \in \N} \frac{\big||\langle \phi_k,\psi_{0} \rangle| - |\langle \phi_k,\psi_{1} \rangle|\big| -\epsilon}{\| B \phi_k \|}.$$ 
Plugging this last inequality into (\ref{eq-major-Tv}), we 
obtain \r{eq-minor-time}.
\end{proof}

We insist on the fact that this result is valid
 whenever system (\ref{main_EQ}) is or is not approximately controllable.

\begin{rem} When $B$ is bounded,  the same estimate as above is valid for other classes of controls 
(not only 
piecewise constant functions but also 
measurable bounded or locally integrable)
 as soon as we can define a unique solution of  system \r{main_EQ} that satisfies \r{very_weak}. See Remark~\ref{rem}.
\end{rem}

\begin{rem}
It follows from \r{eq-minor-time} that, in general,
approximate  controllability does not imply finite-time approximate controllability. Indeed, if $B \phi_k$ tends to $0$ as $k$ goes to infinity, then for every $T>0$ the attainable set at time $T$ from a given point $\psi_0$ is not dense in $\s$ since for every $\eps\in (0,1)$, for $k$ large enough, $\phi_k$ is not $\eps$-approximately attainable from    $\psi_0$ in time $T$.

\end{rem}

\section{Controllability for density matrices}
\label{s-density}

\newcommand{\UU}{\mathbf{U}}
\newcommand{\VV}{\mathbf{V}}

\subsection{Physical motivations}
 A {\it density matrix} (sometimes called density operator) is a non-negative, self-adjoint operator of trace class \cite[Vol.~I]{reed_simon} on a Hilbert space. The trace of a density matrix is normalized to one. As a consequence of the definition  a density matrix  is a compact operator (hence with discrete spectrum) and can always be written as a weighted sum of projectors, 
\begin{eqnarray}
\rho=\sum_{j=1}^\infty P_j \varphi_j \varphi_j^\ast,
\label{density-m}
\end{eqnarray}
where  $P_j\in [0,1]$, $\sum_j P_j=1$, and 
$\varphi_j \varphi_j^\ast$ is the {\it orthogonal projector} on the space spanned by  $\varphi_j$ with  $\varphi_j^\ast(\cdot)=\la \varphi_j,\cdot\ra$. Here $\{ \varphi_j\}_{j\in \N}$ is a set of normalized vectors not necessarily orthogonal.

The density matrix is used to describe the evolution of systems whose initial wave function is not known precisely, but only with a certain probability, or when one is dealing with an ensemble of identical systems that cannot be prepared precisely in the same state. More precisely (\ref{density-m}) describes a system whose state is known to be $\varphi_j$ with probability $P_j$, $j\in\N$. Given an observable $A$ (i.e. a self-adjoint operator, for instance the drift Hamiltonian) the mean value   of $A$ is  $\mbox{Tr}(\rho A)=\sum_{j=1}^\infty P_j \la \varphi_j,A\varphi_j\ra$, where $\la\varphi_j,A\varphi_j\ra$ represents the mean value of the observable $A$ in the  state $\varphi_j$. 
When for some $k\in{\N}$ we have $P_k=1$ and $P_j=0$  for every  $j\neq k$, one says that $\rho$ describes a {\it pure state}, otherwise one says that $\rho$ describes a {\it mixed state}. In the case of pure states, the physical description via the density matrix is equivalent to the one via the wave function. Notice that for a pure state Tr$(\rho^2)=1$ while for a  mixed state one has Tr$(\rho^2)<1$.

Without loss of generality it is possible to  require that  $\{\varphi_j\}_{j\in\N}$  is  an orthonormal basis  (i.e. a basis of normalized eigenvectors of $\rho$). In this case $\{P_j\}_{j\in \N}$ is the spectrum of $\rho$. 

The time evolution of the density matrix 
is determined by the evolutions of the states $\varphi_j$, namely
\begin{eqnarray}
\rho(t)=\UU(t) \rho(0) \UU^{\ast}(t)
\end{eqnarray}
where $\UU(t)$ is the operator of temporal evolution (the resolvent) and $\UU^{\ast}(t)$ its adjoint.
Notice that the spectrum of $\rho(t)$ is constant along the motion.

\subsection{Statement of the result}

Fix $\delta>0$ and let $(A,B,(0,\delta))$ be  a skew-adjoint  \diag\  control system on a Hilbert space $\H$,  $(\varphi_j)_{j\in \N}$ an orthonormal basis of $\H$ (not necessarily of eigenvectors of $A$), $\{ P_j \}_{j\in N}$ a sequence of non-negative numbers such that $\sum_{j=1}^\infty P_j=1$, and denote by $\rho$ the density matrix
$$
\rho=\sum_{j=1}^{\infty}P_j \varphi_j {\varphi_j}^\ast.
$$
\begin{defn}
Two density matrices  $\rho_0$ and $\rho_1$ are said to be {\it unitarily equivalent} if there exists a unitary transformation $\UU$ of $\H$ such that  $\rho_1=\UU \rho_0  \UU^\ast$.
\end{defn}
Obviously the controllability question for the evolution of the density matrix makes sense only for pairs  $(\rho_0,\rho_1)$ of  initial and final density matrices that are  unitarily equivalent.
Notice that this  is a quite strong assumption, since it
 implies that the eigenvalues of  $\rho_0$ and $\rho_1$ are the same. Controllability results in the case of density matrices that are not unitarily  equivalent   have been obtained in the case of open systems (i.e. systems evolving under a suitable nonunitary evolution) in the finite-dimensional case. See for instance \cite{altafini}.

Next section is devoted to the proof of the following theorem.

\begin{theorem}\label{densities}
Let $\rho_0$ and $\rho_1$ be two  unitarily equivalent  density matrices. Then, under the hypotheses of Theorem \ref{main},  for every $\eps>0$ there exists a piecewise constant control steering the density matrix 
from $\rho_0$ 
$\eps$-approximately to $\rho_1$ i.e. 
there exist $k\in \N$, $t_1,\dots,t_k>0$ and $u_1,\dots,u_k\in (0,\delta)$ such that setting
$\VV=e^{t_k(A+u_k B)}\circ \cdots \circ e^{t_1(A+u_1 B)}$, one has 
$\|\rho_1-    \VV\rho_0\VV^\ast \|<\eps$, 
where $\|  \cdot\|$ denotes the operator norm on $\H$.
\end{theorem}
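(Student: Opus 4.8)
The plan is to reduce the density-matrix statement to Theorem~\ref{main} (applied repeatedly) by exploiting the fact that $\rho_0$ is trace class, hence its spectrum $(P_j)_{j\in\N}$ is summable, so only finitely many eigenvalues carry "most of the mass." First I would diagonalize both density matrices: since $\rho_0$ and $\rho_1$ are unitarily equivalent they share the same sequence of eigenvalues $(P_j)_{j\in\N}$, which we may assume to be arranged in non-increasing order; write $\rho_0=\sum_j P_j \varphi_j\varphi_j^\ast$ and $\rho_1=\sum_j P_j \tilde\varphi_j\tilde\varphi_j^\ast$ with $(\varphi_j)_{j\in\N}$ and $(\tilde\varphi_j)_{j\in\N}$ orthonormal bases of $\H$. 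Given $\eps>0$, choose $m\in\N$ with $\sum_{j>m}P_j<\eps/4$; the tails $\sum_{j>m}P_j\varphi_j\varphi_j^\ast$ and $\sum_{j>m}P_j\tilde\varphi_j\tilde\varphi_j^\ast$ then have operator norm (indeed trace norm) at most $\eps/4$, so it suffices to steer the truncated matrix $\rho_0^{(m)}=\sum_{j\le m}P_j\varphi_j\varphi_j^\ast$ into an $\eps/2$-neighborhood of $\rho_1^{(m)}=\sum_{j\le m}P_j\tilde\varphi_j\tilde\varphi_j^\ast$, because conjugation by a unitary $\VV$ is an isometry for the operator norm and the error contributed by the tails is bounded by $\eps/4+\eps/4$ regardless of $\VV$.

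Next I would realize the desired steering as a finite composition of flows, each handling one eigenvector. The key tool is that Theorem~\ref{main} gives approximate controllability on the sphere $\s$ but, more strongly, the proof of Theorem~\ref{main} actually produces, through \eqref{sun} and Proposition~\ref{k-lab}, controllability of the Galerkin approximations \emph{on the full unitary group} $SU(n)$; this is exactly the feature advertised in the introduction. Concretely, for $N$ large I would use that the attainable set of $(\Sigma_N)$ from the identity is all of $SU(N)$, so one can approximately realize, by a piecewise constant control and the associated propagator $\VV=e^{t_k(A+u_kB)}\circ\cdots\circ e^{t_1(A+u_1B)}$, any unitary that maps $\spann(\phi_1,\dots,\phi_N)$ to itself in a prescribed way. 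The target unitary to approximate is one sending $\varphi_j\mapsto\tilde\varphi_j$ for $j\le m$ (completed arbitrarily to a unitary on $\spann(\phi_1,\dots,\phi_N)$, after first choosing $N$ large enough that $\varphi_1,\dots,\varphi_m,\tilde\varphi_1,\dots,\tilde\varphi_m$ are all, up to a further $\eps$-truncation, contained in $\spann(\phi_1,\dots,\phi_N)$). Here one must be slightly careful: the $\varphi_j$ are not eigenvectors of $A$, so they are only approximately contained in a finite-dimensional Galerkin space; I would absorb the resulting discrepancy into the $\eps$-budget exactly as \eqref{newequation} does in the proof of Theorem~\ref{main}.

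The main technical step — and the one I expect to be the chief obstacle — is upgrading the wave-function-level approximation (which is what Theorem~\ref{main} and its proof directly deliver, and which a priori controls each $\varphi_j$ only in $\H$-norm on the finite Galerkin block) to an operator-norm estimate on $\VV\rho_0\VV^\ast-\rho_1$. The point is that $\VV\rho_0\VV^\ast=\sum_j P_j(\VV\varphi_j)(\VV\varphi_j)^\ast$, and one needs $\sum_{j\le m}P_j\|(\VV\varphi_j)(\VV\varphi_j)^\ast-\tilde\varphi_j\tilde\varphi_j^\ast\|$ small; since $\|\VV\varphi_j\|=\|\tilde\varphi_j\|=1$, the elementary bound $\|\eta\eta^\ast-\zeta\zeta^\ast\|\le 2\|\eta-\zeta\|$ for unit vectors reduces this to $\sum_{j\le m}P_j\|\VV\varphi_j-\tilde\varphi_j\|\le 2\max_{j\le m}\|\VV\varphi_j-\tilde\varphi_j\|$ (as $\sum_j P_j=1$), so it is enough to steer the \emph{finite} family $\varphi_1,\dots,\varphi_m$ simultaneously close to $\tilde\varphi_1,\dots,\tilde\varphi_m$. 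Simultaneous approximate steering of a finite orthonormal family is precisely what controllability of the Galerkin system on $SU(N)$ provides: one single control does the job for all $m$ vectors at once, because they are the images under one unitary. Collecting the three error contributions — the two tails ($\le\eps/4$ each) and the Galerkin/propagator approximation (made $\le\eps/2$ by choosing $N$ large and $k$ large) — gives $\|\rho_1-\VV\rho_0\VV^\ast\|<\eps$, which is the claim. Finally I would note that the controls produced are piecewise constant with values in $(0,\delta)$ and positive durations, as required, since this is inherited verbatim from the construction in the proof of Theorem~\ref{main}.
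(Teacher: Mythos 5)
Your proposal takes essentially the same route as the paper's own proof: truncate the small eigenvalues of the density matrices, observe that the inclusion $\mathfrak{su}(n)\subseteq\Lie(A^{(n)},B^{(n)})$ from \eqref{sun} gives controllability of the Galerkin system on the finite unitary group (hence simultaneous approximate steering of the finite orthonormal family via one propagator), and combine the tail error with the Galerkin/propagator error using $\|\eta\eta^\ast-\zeta\zeta^\ast\|\le 2\|\eta-\zeta\|$. The one point the paper treats with more care is that when $A^{(n)}$ and $B^{(n)}$ are both traceless the attainable set from the identity is only the coset $SU(n)$, so the target is reached only up to a global phase $e^{i\theta}$ with $0\le\theta\le 2\pi/n$; this costs an extra $|e^{i\theta}-1|<\eps$ in the estimate but causes no gap in your argument since your target unitary is taken in $SU(n)$ anyway.
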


\begin{rem}
As Theorem~\ref{main-Sch} is a particularization of Theorem~\ref{main} to the controlled \Sch\ equation, the hypotheses of 
Theorem~\ref{main-Sch} imply $\eps$-approximate controllability 
of the corresponding  density matrix. 
\end{rem}

\subsection{Proof of Theorem~\ref{densities}}
The proof uses the notations of Section~\ref{s-proof}. 
As noticed in Section \ref{TR}, 
the theorem can be restated in terms of 
the evolution of the density matrix corresponding to the control system $\dot \psi=(u A+B)\psi$, $u\in (\delta,+\infty)$. 

Fix $\rho_0$ and $\rho_1$ unitarily equivalent and let $\UU$ be such that 
$\rho_1=\UU\rho_0\UU^\ast$. Write 
$$\rho_0=\sum_{j=1}^\infty P_j \varphi_{0,j} {\varphi_{0,j}}^\ast,$$
 with $(P_j)_{j\in N}$ a sequence of non-negative numbers whose sum is one, and $(\fhi_{0,j})_{j\in N}$ an orthonormal basis of $\H$. Then 
 $$\rho_1=\sum_{j=1}^\infty P_j \varphi_{1,j} {\varphi_{1,j}}^\ast,$$
 with  $\fhi_{1,j}=\UU\fhi_{0,j}$ 
 for every $j\in \N$.

Choose $\eps>0$. Let $m\in\N$ be such that
$$\sum_{j>m}P_j<\eps.$$
The idea is to follow the strategy applied in the proof of Theorem~\ref{main} in order to simultaneously approximately steer $m$ copies of system $(A,B,(0,\delta))$ from $\fhi_{0,j}$ to $\fhi_{1,j}$, $j=1,\dots,m$.

Let $\eta>0$ be a small constant depending on $m$ and $\eps$, to be fixed later. 
There exists $n=n(\eta)>m$ such that, for every $j=1,\dots,m$ and for $k=0,1$,
$$\|\fhi_{k,j}-\Pi_{n}\fhi_{k,j}\|<\eta.$$
By construction, when $\eta$ gets small, the two families
$(\Pi_{n}\fhi_{k,j})_{j=1}^{m}$, $k=0,1$,  tend to two  orthonormal families. 
Hence, there exists a matrix $M$ in $SU(n)$ such that 
\be\label{e-e-e}
\| M(\PPi_{n}\fhi_{0,j})-\PPi_{n}\fhi_{1,j}\|<\eps
\ee
for $j=1,\dots,m$ provided that $\eta$ is small enough (and, consequently, $n$ is large enough).

Without loss of generality we may assume that $B^{(n)}$ is connected. 
Claim~\ref{k-lab} can be extended to the following result.

\newcommand{\Aa}{A^{(n)}}
\newcommand{\Bb}{B^{(n)}}
\newcommand{\tAa}{\tilde A^{(n)}}
\newcommand{\tBb}{\tilde B^{(n)}}

 \begin{claim}
The control system 
\be\label{ode-group}
\dot{g}= (u \Aa+\Bb)g,\ \ \ \ \ \ g\in U(n),
\ee
is controllable 
in the following sense: for any $g_0$, $g_1$ in $U(n)$, there exists a unitary complex number $e^{i \theta}$ with $0\leq \theta\leq 2\pi/n$, a time $T>0$ and a piecewise constant 
function $u:[0,T] \rightarrow (\delta,+\infty)$ such that the solution 
$g^u: [0,T] \rightarrow U(n)$ of \r{ode-group} 
with initial condition  $g^u(0)= g_0$
satisfies $e^{i \theta} g^u(T) =g_1$. 
\end{claim}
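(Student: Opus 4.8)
The plan is to lift the controllability of the system $(\Sigma_n)$ on the sphere $\s_n$ (established via Proposition~\ref{k-lab}) to controllability of the induced system \eqref{ode-group} on the group $U(n)$, using the Lie-algebraic information already extracted in the proof of Proposition~\ref{k-lab}, namely that $\mathfrak{su}(n)\subseteq\Lie(\An,\Bn)$ for $\An=A^{(n)}$, $\Bn=B^{(n)}$. Since $B^{(n)}$ is connected by assumption, that inclusion gives, at the level of the group generated by the flows $e^{t(uA^{(n)}+B^{(n)})}$, at least the action of $SU(n)$: from $\mathfrak{su}(n)\subseteq\Lie(A^{(n)},B^{(n)})$ and the fact that $SU(n)$ is compact (so that the orbit theorem and the standard saturation/Lie-bracket-generating argument apply on the compact group) one obtains that the attainable set from the identity contains $SU(n)$ up to the obstruction coming from the trace: the vector fields $g\mapsto (uA^{(n)}+B^{(n)})g$ live in $\mathfrak{u}(n)$, whose traceless part is $\mathfrak{su}(n)$ but whose remaining one-dimensional direction $i\,\mathrm{Id}_n$ is only reachable through the scalar part $\mathrm{tr}(uA^{(n)}+B^{(n)})=iu\sum_{j=1}^n\lb_j$ (note $B^{(n)}$ is skew-symmetric with zero diagonal, so $\mathrm{tr}\,B^{(n)}=0$). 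Hence the attainable set from $g_0$ is exactly $\{e^{i\theta}g\mid \theta\in\Theta,\ g\in SU(n)g_0\}$ where $\Theta$ is the closure of the set of phases $\exp(iu t\sum_j\lb_j)$ realizable along admissible trajectories returning to $SU(n)g_0$.

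The core step is then to show that $\Theta$ is large enough: specifically that for every $g_1\in U(n)$ one can reach $e^{i\theta}g_1$ with $0\le\theta\le 2\pi/n$. First I would use the $SU(n)$-controllability to reach, in some time $T_0$ with some admissible control, a point $e^{i\theta_0}g_1$ for some phase $\theta_0$ determined by the total "action" $\int_0^{T_0}u(t)\,dt\cdot\sum_j\lb_j$ accumulated; I would then adjust this phase by concatenating a short final control. Here is where I would invoke the $\Q$-linear independence of $(\lb_{k+1}-\lb_k)$, hence of $\sum_{j=1}^n\lb_j$ being (generically) irrational relative to $2\pi$ — more precisely, I would run an argument like in Proposition~\ref{phase}: applying a constant control $u>\delta$ over a short interval $[0,\tau]$ multiplies the reached point by $e^{i\tau u\sum_j\lb_j}g$ for a $g\in SU(n)$ close to the identity (by making $\tau u$ an appropriate near-integer multiple of periods so the $\mathfrak{su}(n)$-part is negligible), and by Poincaré recurrence / density of the orbit of the rotation by $\sum_j\lb_j$ on the circle $\R/2\pi\Z$, the phase $\tau u\sum_j\lb_j\bmod 2\pi$ can be steered arbitrarily close to any prescribed value while keeping $u>\delta$. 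Composing, one lands within $\varepsilon$ of $e^{i\theta}g_1$ for any prescribed $\theta$; since the statement allows $\theta\in[0,2\pi/n]$ there is ample room, and in fact after multiplying $g_1$ by an $n$-th root of unity (which lies in $SU(n)\cdot$ — no, $e^{2\pi i/n}\mathrm{Id}\in U(n)$ has determinant $1$, hence is in $SU(n)$) one can always arrange $\det$ to match and pick the representative with $\theta\in[0,2\pi/n]$. Finally I would note that controllability on the group, being a closed condition obtained from the orbit theorem on the compact group $U(n)$, is exact (not merely approximate).

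The main obstacle I anticipate is the bookkeeping of the determinant/phase: one must carefully separate the $SU(n)$ part of the dynamics (fully controllable by Proposition~\ref{k-lab}'s Lie-algebra computation) from the one-dimensional $\det$-part, which evolves only via the scalar $e^{i\int u\,dt\,\sum_j\lb_j}$, and show that the irrationality furnished by the $\Q$-linear independence hypothesis lets this scalar be tuned into the window $[0,2\pi/n]$ — equivalently, that for given $g_0,g_1\in U(n)$ the required total phase correction is attainable. A secondary technical point is ensuring all controls used stay in the open half-line $(\delta,+\infty)$; this is handled exactly as in Proposition~\ref{phase}, by taking the recurrence times large so that the average control $u=(\text{phase time})/\tau$ exceeds $\delta$. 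Modulo these points, the claim follows by concatenating the $SU(n)$-steering control from Proposition~\ref{k-lab} with the short phase-adjusting control, and invoking exactness of group controllability on the compact Lie group $U(n)$ (e.g.~\cite[Cor. 8.6, Prop. 8.14, Th. 8.15]{book2} as in Proposition~\ref{k-lab}).
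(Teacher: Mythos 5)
Your overall direction is right — lift the Lie-algebraic inclusion $\mathfrak{su}(n)\subseteq\Lie(A^{(n)},B^{(n)})$ from the proof of Proposition~\ref{k-lab} to the group level and invoke exact controllability for right-invariant systems on compact Lie groups — but the intermediate ``phase-tuning'' step is both based on a false premise and conceptually misguided, and this is precisely where the paper's proof is clean and your plan is not.

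First, the assertion $\mathrm{tr}\,B^{(n)}=0$ is unjustified: $B$ is skew-\emph{adjoint}, so $B^{(n)}$ is skew-\emph{Hermitian}, and its diagonal entries $b_{jj}=\langle B\phi_j,\phi_j\rangle$ are purely imaginary but need not vanish. You have conflated skew-Hermitian with real skew-symmetric. Second, the plan to ``adjust the phase'' via Poincar\'e recurrence and the $\Q$-linear independence of the spectral gaps does not apply in the very case where it would be needed: if both $A^{(n)}$ and $B^{(n)}$ are traceless, then $\mathrm{tr}(uA^{(n)}+B^{(n)})=0$ for all $u$, the determinant of every attainable point equals $\det g_0$ identically, and no amount of control steers it — there is nothing to tune. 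Conversely, if at least one of $A^{(n)},B^{(n)}$ has nonzero trace, then (since $i\,\mathrm{Id}_n$ commutes with everything) $\Lie(A^{(n)},B^{(n)})=\mathfrak{su}(n)+\spann\{A^{(n)},B^{(n)}\}=\mathfrak{u}(n)$ and the system is already exactly controllable on all of $U(n)$ with $\theta=0$; again no tuning is required. The paper's proof is exactly this dichotomy on traces. In the traceless case the attainable set is the coset $g_0 SU(n)=\{g\in U(n):\det g=\det g_0\}$, and the window $\theta\in[0,2\pi/n]$ in the claim is there not so that you may tune a phase, but because for any target $g_1$ there is a unique $\theta\in[0,2\pi/n)$ making $\det(e^{-i\theta}g_1)=e^{-in\theta}\det g_1$ equal to $\det g_0$; that $e^{-i\theta}g_1$ is then in the attainable coset. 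Your closing remark about multiplying $g_1$ by an $n$-th root of unity to match determinants is the correct idea, but it replaces — rather than supplements — the Poincar\'e-recurrence machinery, which should be deleted from the argument.
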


\begin{proof} 
Let us first assume that at least one among $\Aa$ and $\Bb$ has nonzero trace and hence does not belong to $\mathfrak{su}(n)$. In this case the inclusion \r{sun}, with $\Aa=\An$ and $\Bb=\Bn$, implies that
$\Lie(\Aa,\Bb)=\mathfrak{u}(n).$
Classical controllability results for right invariant systems on compact Lie groups  (see \cite{jur,such}) 
ensure that the attainable set from $g_0$ of \r{ode-group} coincides with $U(n)$ so that the claim holds with $\theta=0$.

It remains to consider the case in which the traces of $\Aa$ and $\Bb$ are zero, i.e. $\Aa$ and $\Bb$ belong to $\mathfrak{su}(n)$. In this case \r{sun} implies that
$\Lie(\Aa,\Bb)=\mathfrak{su}(n),$ 
and therefore the attainable set from $g_0$ of \r{ode-group} coincides with $g_0 SU(n)$, the set of matrices of $U(n)$ having the same determinant as $g_0$.
Given a target $g_1$  there exists $\vartheta\in[0,2\pi]$ such that $\det(g_0)=e^{-i\vartheta}\det(g_1)=\det(e^{-i\frac{\vartheta}{n}} g_1)$. Hence the claim holds true with $\theta=\vartheta/n$.
\end{proof}

 Let $T>0$, $u:[0,T]\to (\delta,+\infty)$ and $0\le \theta\le 2\pi/n$ be such that the control $u$ steers system \r{ode-group} from $I_n$ 
 to $e^{i\theta} M$. 
 Notice that, without loss of generality, $2\pi/n<\eps$.
 
 Let $\mu>0$ be a small constant to be fixed later.
 Fix $N\in\N$ such that 
 $$ \| (b_{jl})_{l>N}\|_{l^2}<\mu$$
 for every $j=1,\dots,n$.
Let us apply Claim~\ref{roba} to the control function $u$ and denote
by
 $(u_k)_{k\in \N}$ the sequence of piecewise constant control functions obtained in this way.
 Write, moreover, $v(t)=\int_0^t u(\tau)d\tau$ and $v_k(t)=\int_0^t u_k(\tau)d\tau$.
For every $k\in \N$ write $u_k$ as
$$u_k(t)=\sum_{j=1}^{p_k} w^k_j \chi_{[t_j^k,t_{j+1}^k)}(t),\ \ \ \ t\in[0,T]$$ 
with $0= t_1^k\leq\cdots\le t_{p_k}^k= T$
and
denote by $\VV_k$ the unitary transformation
$$\VV_k=e^{\lp t_{p_k}^k-t_{p_k-1}^k\rp\lp w_{p_k-1}^k A+B\rp}\circ \cdots\circ 
e^{(t_2^k-t_{1}^k)(w_{1}^k A+B)}.$$
For every $j=1,\dots,m$,
\brs
 \| \Pi_n\lp e^{(v(T)-v^k(T))A}\VV_k \fhi_{0,j}\rp-\Pi_n\lp \fhi_{1,j}\rp\|
 &\le&\| \PPi_n\lp e^{(v(T)-v^k(T))A}\VV_k \fhi_{0,j}\rp-e^{i\theta}M\lp \PPi_n\fhi_{0,j}\rp \|\\
 &&+\| 
e^{i\theta} M\lp \PPi_n\fhi_{0,j}\rp-M\lp \PPi_n\fhi_{0,j}\rp\|\\
 &&+\| 
 M\lp \PPi_n\fhi_{0,j}\rp-
 \PPi_n\lp 
 \fhi_{1,j}\rp\|.
 \ers
The same computations as in Section~\ref{china} 
(cf. \r{2eps}) show that, for every $j=1,\dots,m$,
$$\| \PPi_n\lp e^{(v(T)-v^k(T))A}\VV_k \fhi_{0,j}\rp-e^{i\theta} M\lp \PPi_n\fhi_{0,j}\rp \|\le 2\eps$$
for $\mu$ small and $k$ large enough. 
Since $0\leq \theta\leq 2\pi/n<\eps$, then, for every $j=1,\dots,m$,
$$\| 
e^{i\theta} M\lp \PPi_n\fhi_{0,j}\rp-M\lp \PPi_n\fhi_{0,j}\rp\|\leq |e^{i\theta}-1|<\eps.$$
Hence, because of \r{e-e-e}, 
for $k$ large enough, for every $j=1,\dots,m$,
$$ \| \Pi_n\lp e^{(v(T)-v^k(T))A}\VV_k \fhi_{0,j}\rp-\Pi_n\lp \fhi_{1,j}\rp\|<4\eps.$$
Applying Proposition~\ref{phase} we can, up to the  extension of  $u^k$ 
to a piecewise constant control defined on  a larger interval, 
assume  that 
$$ \| \Pi_n\lp\VV_k \fhi_{0,j}\rp-\Pi_n\lp \fhi_{1,j}\rp\|<5\eps,$$
for every $j=1,\dots,m$. Therefore,
\newcommand{\VVV}{\VV_k}
\brs
\| \VVV\rho_0\VVV^\ast-\rho_1\|&=&\|\sum_{j=1}^\infty P_j\lp (\VVV \fhi_{0,j})(\VVV \fhi_{0,j})^\ast-\fhi_{1,j} \fhi_{1,j}^\ast\rp\|\\
&\leq&\|\sum_{j=1}^m P_j\lp (\VVV \fhi_{0,j})(\VVV \fhi_{0,j})^\ast-\fhi_{1,j} \fhi_{1,j}^\ast\rp\|+2\eps\\
&\leq&\sum_{j=1}^m P_j\lp \| \VVV \fhi_{0,j}\|+\| \fhi_{1,j}\|\rp\| \VVV \fhi_{0,j}-\fhi_{1,j}\| +2\eps\\
&\leq&2(5\eps)+2\eps\ =\ 12\eps,
\ers
provided that $k$ is large enough.

\section{Examples}\label{examples}

\subsection{Perturbation of the spectrum} \label{sec-perturbation}

The scope of Section \ref{examples} is to show how 
the general controllability results obtained 
in the previous sections
can be applied in specific cases.
In particular, we want to show  how the
conditions on the spectrum of the \Sch\ operator 
appearing in the hypotheses of Theorem~\ref{main-Sch}
can be checked in practice. 

Let us adopt the notations of Section~\ref{bible} for the
domain $\Omega$, the wave function $\psi$, and the uncontrolled and  controlled potentials $V$ and $W$.
Throughout this section we assume that
one of the hypotheses (i) or (ii) of Corollary~\ref{booo} holds true. Thus, $(A,B,U)$ is a well-defined  controlled \Sch\ equation, where $A=-i(-\Delta+V)$ and $B=-i W$.

The study of the examples 
below is based on the simple idea
that, even if the 
hypotheses of Theorem~\ref{main-Sch} 
are not satisfied 
by the operators $A$ and $B$,
one can anyway ensure that they hold true 
for $A_\mu=-i(-\Delta+V+\mu W)$ and $B_\mu=-i
W$ for some $\mu$ in the interior of $U$.
This is enough to conclude 
that the system $\dot \psi=A\psi+uB\psi$, $u\in U$, is approximately controllable,
since the replacement of $(A,B)$ by $(A_\mu,B_\mu)$ 
corresponds to a reparameterization of $U$ that sends $u$ into a new control $u-\mu\in U-\mu$ and $V$ into $V+ \mu W$. 
Although the spectrum 
of $A_\mu$ is not in general 
explicitly  computable, we can nevertheless deduce 
 some crucial properties about it by applying standard perturbation arguments.
 Theorem~\ref{the-perturbation} 
  recalls, in a simplified version suitable for our purposes, some classical perturbation results  
 describing the dependence on $\mu$ of the spectrum of 
 $-\Delta +V+\mu W$. (See \cite[Chapter VII, Remark 4.22]{katino}, \cite[\S II.10, Theorem~1]{Rellich} and also \cite{Albert}.)
\begin{theorem}  \label{the-perturbation}
Let $U$ be an open interval containing zero.
Assume either that (i) $\Omega$ is bounded, $V,W$ belong to $L^\infty(\Omega)$
or that (ii) $\Omega=\R^d$, $V$ belongs to $L^1_{\mathrm{loc}}(\R^d)$, $W$ belongs to $L^\infty(\R^d)$, 
$\lim_{\|x\|\to+\infty}V(x)=+\infty$ and $\inf_{x\in\R^d}V(x)>-\infty$.
 In both cases (i) and (ii) assume that each eigenvalue of the \Sch\ operator $-\Delta+V$ is 
 simple. Denote by $(\lb_k)_{k\in\N}$ the 
 sequence of eigenvalues of $-\Delta+V$ and by $(\phi_k)_{k\in\N}$ the 
 corresponding eigenfunctions. 
Then, for any $k$ in $\mathbf{N}$, there exist 
two analytic curves $\Lambda_k:U \to \mathbf{C}$
and $\Phi_k:U \to L^2(\Omega)$    such that:
\begin{itemize}
\item $\Lambda_k(0)=\lambda_k$ and $\Phi_k(0)=\phi_k$;
\item for any $\mu$ in $U$,  $(\Lambda_k(\mu))_{k\in\N}$ is the 
family of eigenvalues of $\Delta -V + \mu W$ counted according to their multiplicities
and $(\Phi_k(\mu))_{k\in\N}$ is an orthonormal basis of
corresponding eigenfunctions;
\item ${\Lambda'_k}(0)=\int_{\Omega} W(x) |\phi_k(x)|^2 dx$.
\end{itemize}
\end{theorem}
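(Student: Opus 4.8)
The plan is to invoke standard analytic perturbation theory for self-adjoint operators, in the form developed by Rellich and Kato, applied to the family $T_\mu = -\Delta + V + \mu W$ for $\mu$ ranging over the open interval $U$. The key structural fact that makes everything work is that, in both cases (i) and (ii), Theorem~\ref{the-bounded} and Theorem~\ref{the-unbounded} guarantee that each $T_\mu$ is self-adjoint with compact resolvent, hence has purely discrete spectrum accumulating only at $+\infty$; moreover, with $W$ bounded, $\mu W$ is a bounded perturbation, so the family $\mu\mapsto T_\mu$ is an \emph{analytic family of type (A)} in the sense of Kato (the domain $D(T_\mu)=D(-\Delta+V)$ is independent of $\mu$, and $\mu\mapsto T_\mu\psi$ is analytic for each fixed $\psi$ in the common domain).

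First I would record the type-(A) property and then quote the Rellich--Kato theorem on analytic families with compact resolvent: for such a family, the eigenvalues and eigenprojections can be organized into globally defined analytic functions on $U$ when all eigenvalues stay simple. Here the hypothesis is that the eigenvalues of $-\Delta+V$ (i.e.\ at $\mu=0$) are simple; by analyticity and the fact that eigenvalue branches that are analytic and coincide on a set with an accumulation point must coincide identically, simplicity at one point propagates so that the branches $\Lambda_k$ remain distinct on all of $U$ except possibly at isolated crossing points. Since the statement only asserts the existence of analytic curves $\Lambda_k:U\to\mathbf{C}$ and $\Phi_k:U\to L^2(\Omega)$ with $(\Lambda_k(\mu))_k$ enumerating the spectrum with multiplicity and $(\Phi_k(\mu))_k$ an orthonormal eigenbasis, this follows directly: one takes the analytic eigenvalue branches supplied by the Rellich--Kato theory and, at each $\mu$, the corresponding normalized eigenvectors, which can be chosen analytically (the eigenprojections are analytic and rank one away from crossings, and at worst one reorganizes labels at crossings). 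The normalization $\Phi_k(0)=\phi_k$ and $\Lambda_k(0)=\lambda_k$ fixes the branches.

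Second I would establish the Hellmann--Feynman-type formula $\Lambda_k'(0)=\int_\Omega W(x)|\phi_k(x)|^2\,dx$. Differentiating the eigenvalue equation $T_\mu \Phi_k(\mu) = \Lambda_k(\mu)\Phi_k(\mu)$ at $\mu=0$ and pairing with $\phi_k$, using self-adjointness of $-\Delta+V$ to kill the term $\langle (-\Delta+V)\Phi_k'(0),\phi_k\rangle - \lambda_k\langle \Phi_k'(0),\phi_k\rangle$, leaves $\Lambda_k'(0)=\langle W\phi_k,\phi_k\rangle = \int_\Omega W|\phi_k|^2$; since $\phi_k$ is real-valued one may equally write $\int_\Omega W\phi_k^2$. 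This is a routine computation once the analytic differentiability of $\Phi_k$ is in hand.

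The main obstacle, and the only place requiring genuine care, is case (ii): $\Omega=\mathbf{R}^d$ with $V$ merely in $L^1_{\mathrm{loc}}$, unbounded, confining. One must check that $-\Delta+V$ is defined as a sum of quadratic forms on a form domain contained in the $L^2$ form domain of $W$ (automatic here since $W\in L^\infty$), so that $\mu W$ is a bounded form perturbation and in fact a bounded operator perturbation relative to the operator domain, making the family type (A) with $\mu$-independent operator domain $D(-\Delta+V)=H^2\cap D(V)$. The compact-resolvent property of each $T_\mu$, needed so that the spectrum is discrete and the perturbation theory for isolated eigenvalues applies, is exactly what Theorem~\ref{the-unbounded} provides, since $V+\mu W\to+\infty$ and is bounded below uniformly in $\mu$ on the compact closure of bounded $\mu$-intervals. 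Once these functional-analytic points are nailed down, the conclusion is a direct citation of \cite[Chapter VII]{katino} and \cite[\S II.10]{Rellich}, together with the elementary differentiation above; I would present the argument at that level of detail rather than reproving the perturbation theory.
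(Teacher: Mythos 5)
Your proposal is correct and takes essentially the same approach as the paper. In fact the paper offers no proof of Theorem~\ref{the-perturbation} at all --- it is stated as a recollection of classical results with citations to Kato and Rellich --- and your argument is precisely a sketch of that cited theory: type-(A) analyticity of $\mu \mapsto -\Delta+V+\mu W$ (bounded $W$, $\mu$-independent domain), compact resolvent via Theorems~\ref{the-bounded} and \ref{the-unbounded}, the Rellich--Kato theorem for global analytic eigenvalue and eigenvector branches of a self-adjoint analytic family, and the Hellmann--Feynman differentiation to get $\Lambda_k'(0)=\int_\Omega W|\phi_k|^2$.
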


We check below that if the derivatives ${\Lambda'_k}(0)$ are 
$\mathbf{Q}$-linearly independent then for almost every $\mu\in U$ 
the eigenvalues of $-\Delta+V+\mu W$ are $\mathbf{Q}$-linearly independent. 
This fact is used in the following to apply Theorem~\ref{main-Sch} to situations 
in which the uncontrolled \Sch\ operator has a resonant spectrum.  

Recall that, in the notations of Section~\ref{scheme}, for any pair of integers $j,k\in \N$, 
\be\label{def-bjk}
b_{jk}=\int_{\Omega} W(x)  \phi_j(x) \phi_k(x) dx.
\ee
In particular, ${\Lambda'_k}(0)=\int_{\Omega} W(x) |\phi_k(x)|^2 dx$ is equal to $b_{kk}$.

\begin{prop}\label{pro-pro}
Let $U$ be an open interval containing zero and assume that
$\Omega$, $V$ and $W$ satisfy one of the hypotheses (i) or (ii) of Theorem~\ref{the-perturbation} and that the eigenvalues of $-\Delta+V$ are simple. If the elements of the sequence 
$(b_{kk})_{k\in\N}$
are $\mathbf{Q}$-linearly independent, then for almost every $\mu$ in $U$ the elements of  $({\Lambda_k}(\mu))_{k \in \N}$ are  $\mathbf{Q}$-linearly independent.  
\end{prop}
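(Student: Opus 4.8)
The plan is to combine the real-analyticity of the eigenvalue branches $\mu\mapsto\Lambda_k(\mu)$ supplied by Theorem~\ref{the-perturbation} with a countable-union-of-null-sets argument. First I would replace $\mathbf{Q}$-linear dependence by $\mathbf{Z}$-linear dependence: clearing denominators, a nontrivial rational relation $\sum_{k=1}^N q_k\Lambda_k(\mu)=0$ can always be rewritten with $q=(q_1,\dots,q_N)\in\Z^N\setminus\{0\}$. Hence the set of ``bad'' parameters is
\[
B=\bigcup_{N\in\N}\ \bigcup_{q\in\Z^N\setminus\{0\}}\Big\{\mu\in U:\ \sum_{k=1}^N q_k\,\Lambda_k(\mu)=0\Big\},
\]
a countable union, and it suffices to prove that each of the sets occurring in it has Lebesgue measure zero.

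Fix $N\in\N$ and $q\in\Z^N\setminus\{0\}$, and set $f_q(\mu)=\sum_{k=1}^N q_k\Lambda_k(\mu)$. By Theorem~\ref{the-perturbation} each $\Lambda_k$ is analytic on the interval $U$, and it is real-valued because $-\Delta+V+\mu W$ is self-adjoint for real $\mu$; thus $f_q$ is real-analytic on the connected open set $U$. A real-analytic function on $U$ that is not identically zero has isolated zeros, hence a zero set of measure zero, so the only point to verify is that $f_q\not\equiv0$. But if $f_q$ vanished identically then in particular $f_q'(0)=0$, that is, $\sum_{k=1}^N q_k\Lambda_k'(0)=0$; since $\Lambda_k'(0)=\int_\Omega W(x)|\phi_k(x)|^2\,dx=b_{kk}$ by Theorem~\ref{the-perturbation} and \r{def-bjk}, this would contradict the hypothesis that $(b_{kk})_{k\in\N}$ is $\mathbf{Q}$-linearly independent. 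Therefore $f_q\not\equiv0$, its zero set is null, and summing over the countably many pairs $(N,q)$ shows that $B$ is null. For every $\mu\in U\setminus B$ the family $(\Lambda_k(\mu))_{k\in\N}$ is $\mathbf{Q}$-linearly independent, which is exactly the assertion.

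I do not expect a genuine analytic obstacle here; the only delicate points are structural. One must make sure that each $\Lambda_k$ is defined and analytic on \emph{all} of $U$, not merely locally — and this is precisely what the simplicity assumption on the spectrum of $-\Delta+V$ buys us through Theorem~\ref{the-perturbation}, which rules out the branch-point phenomena of analytic perturbation theory and provides a globally labelled analytic family. One should also observe that nothing forces the labelling to keep the $\Lambda_k(\mu)$ ordered or pairwise distinct for all $\mu$: eigenvalue crossings at isolated values of $\mu$ are harmless, since the argument uses only that each individual branch $\Lambda_k$ is analytic and satisfies $\Lambda_k'(0)=b_{kk}$.
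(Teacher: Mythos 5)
Your proof is correct and follows essentially the same route as the paper: both arguments fix a rational (or, after clearing denominators, integer) coefficient vector, observe that the resulting combination $\sum_k q_k\Lambda_k$ is real-analytic on $U$, rule out $\sum_k q_k\Lambda_k\equiv 0$ by differentiating at $\mu=0$ and invoking $\Lambda_k'(0)=b_{kk}$, conclude that the zero set is null, and finish with a countable union over coefficient vectors. The only cosmetic difference is that the paper works directly with $z\in\Q^l$ and notes the zero set is countable, while you pass to $\Z^N$ first and say it has measure zero; these are interchangeable.
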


\begin{proof}
Let $l\in\N$ and $z=(z_1,\dots,z_l)\in\Q^l$. 
 Denote by $\Upsilon_z$ the subset of elements $\mu$ in $U$ such that $\sum_{j=1}^l z_j \Lambda_j(\mu)=0$. Since each $\mu\mt {\Lambda_k}(\mu)$ is an analytic function, then 
 $\Upsilon_z$ is either equal to $U$ or to a countable subset of $U$. 
 Since $b_{11}=\Lambda'_1(0),\dots,b_{ll}=\Lambda'_l(0)$ are $\mathbf{Q}$-linearly independent, then $\Upsilon_z=U$ if and only if $z=0$. 
Hence, the union ${\bf \Upsilon}=\cup_{l \in \mathbf{N}} \cup_{z\in\Q^l,\;z\ne0}\Upsilon_z$ has Lebesgue measure zero, since it is countable. 
By construction, if $\mu$ does not belong to ${\bf \Upsilon}$, the elements of  $({\Lambda_k}(\mu))_{k \in \N}$ are $\mathbf{Q}$-linearly independent. 
\end{proof}

The other crucial hypothesis of 
Theorem~\ref{main-Sch} is 
that $b_{j,j+1}\ne0$ for every $j\in\N$ (or, more generally, that
$B^{(n)}=(b_{jk})_{j,k=1}^n$ is frequently connected, see Remark~\ref{connectedness}). By the same analyticity argument as above one checks that either 
such hypothesis is always false or it is true for almost every $\mu\in U$.

\begin{corol}\label{complete}
Let $U$ be an open interval containing zero and assume that
$\Omega$, $V$ and $W$ satisfy one of the hypotheses (i) or (ii) of Theorem~\ref{the-perturbation} and that the eigenvalues of $-\Delta+V$ are simple. 
Assume moreover that the elements of the sequence 
$(b_{kk})_{k\in\N}$
are $\mathbf{Q}$-linearly independent 
and that $B^{(n)}$ is frequently connected. 
Then 
the controlled 
\Sch\ equation associated with $\Omega$, $V$, $W$ and $\tilde U$ is approximately controllable for every $\tilde U\subset U$ with nonempty interior.
\end{corol}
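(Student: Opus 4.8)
The plan is to reduce Corollary~\ref{complete} to Theorem~\ref{main-Sch} by a perturbation argument, exactly in the spirit announced in Section~\ref{sec-perturbation}. The key point is that replacing the drift $V$ by $V+\mu W$ for a suitable constant $\mu$ in the interior of $\tilde U$ corresponds merely to a translation of the control variable, so it does not change the controlled \Sch\ equation under consideration, only the labels. I would first record this observation precisely: if $(A_\mu,B,U-\mu)$ is approximately controllable, where $A_\mu=-i(-\Delta+V+\mu W)$ and $B=-iW$, then so is $(A,B,U)$, since $e^{t(A+uB)}=e^{t(A_\mu+(u-\mu)B)}$ and $u\mapsto u-\mu$ maps $\tilde U$ onto $\tilde U-\mu$, which still has nonempty interior and, for $\mu$ chosen in the interior of $\tilde U$, still contains a translate of some interval $(0,\delta)$ (after possibly also changing the sign of the control, which only replaces $W$ by $-W$ and leaves the hypotheses untouched).

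Second, I would invoke Theorem~\ref{the-perturbation} to get the analytic families $\Lambda_k(\mu)$ and $\Phi_k(\mu)$ of eigenvalues and eigenfunctions of $-\Delta+V+\mu W$ on the open interval $U$, together with the formula $\Lambda_k'(0)=b_{kk}$. Then Proposition~\ref{pro-pro}, applied verbatim, shows that the set of $\mu\in U$ for which $(\Lambda_k(\mu))_{k\in\N}$ fails to be $\Q$-linearly independent has Lebesgue measure zero. Next I would carry out the analyticity argument sketched right before the statement of the corollary for the coupling condition: for each fixed $n$, the functions $\mu\mapsto \int_\Omega W \Phi_j(\mu)\Phi_k(\mu)\,dx$ ($1\le j,k\le n$) are analytic on $U$, so the set where $B^{(n)}(\mu)$ is connected is either empty for all $\mu$ or the complement of a countable set; since at $\mu=0$ we are assuming $B^{(n)}$ is frequently connected, for each $j$ there is some $k\ge j$ with $B^{(k)}(0)$ connected, hence $B^{(k)}(\mu)$ connected for $\mu$ outside a countable set. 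Taking the countable union over $j$ of these exceptional sets, one finds that $B^{(n)}(\mu)$ is frequently connected for all $\mu$ outside a set of measure zero.

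Third, I would choose $\mu$ in the \emph{interior} of $\tilde U$ avoiding the (measure-zero) union of the two exceptional sets above and also avoiding the (at most countable) set of $\mu$ for which $-\Delta+V+\mu W$ has a non-simple eigenvalue, if needed; such a $\mu$ exists because $\tilde U$ has nonempty interior. For this $\mu$, the operators $A_\mu=-i(-\Delta+V+\mu W)$ and $B_\mu=-iW$ satisfy all the hypotheses of Theorem~\ref{main-Sch} (the spectrum of $-\Delta+V+\mu W$ is $\Q$-linearly independent by difference, so in particular the consecutive gaps $\Lambda_{k+1}(\mu)-\Lambda_k(\mu)$ are $\Q$-linearly independent, and $B^{(n)}(\mu)$ is frequently connected, which by Remark~\ref{connectedness} is the weaker condition that replaces $b_{j,j+1}\ne0$). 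Hence the controlled \Sch\ equation associated with $\Omega$, $V+\mu W$, $W$ and $\tilde U-\mu$ is approximately controllable, and by the reduction of the first step the controlled \Sch\ equation associated with $\Omega$, $V$, $W$ and $\tilde U$ is approximately controllable as well.

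I expect the only genuinely delicate point to be the bookkeeping that ensures, after translating the control by $\mu$, that the control set still contains a half-line (or, after the time-reparametrization of Section~\ref{TR}, an interval of the form $(0,\delta)$): one must check that a suitable small subinterval of $\tilde U-\mu$ around the origin is available, which is where the requirement that $\mu$ lie in the \emph{interior} of $\tilde U$ is used, together with the sign flip of $W$ if the relevant subinterval sits on the negative side. Everything else is a routine concatenation of the analyticity/measure-zero arguments already present in Proposition~\ref{pro-pro} and the remark preceding the corollary, so I would keep those parts brief and refer back rather than reprove them.
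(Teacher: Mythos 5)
Your proposal is correct and follows essentially the same route the paper takes: the paper leaves the corollary's proof implicit, relying on the reparameterization remark opening Section~\ref{sec-perturbation}, Theorem~\ref{the-perturbation}, Proposition~\ref{pro-pro}, and the analyticity observation about frequent connectedness, which is exactly what you assemble. The only minor over-engineering is the sign flip of $W$, which is never actually needed: since $\mu$ lies in the interior of $\tilde U$, the translated set $\tilde U-\mu$ is a neighborhood of $0$ and hence already contains an interval $(0,\delta)$.
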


\subsection{1D harmonic oscillator}

In this section we 
study the Schr\"{o}dinger equation describing the evolution of the controlled one-dimensional  harmonic oscillator,
\be\label{1Dosc}
i \frac{\partial \psi}{\partial t}(t,x) = - \frac{\partial^{2} \psi}{\partial x^2}(t,x) + \left ( x^2 - u(t) W(x) \right ) \psi(t,x),
\ee
where $\psi$ is the wave function depending on the time  $t $ and on 
a space variable $x \in \mathbf{R}=\Omega$. 
Recall that $u(\cdot)$ is a piecewise-continuous function with values in a subset $U$ of $\R$.
Notice that the potential corresponding to the uncontrolled 
\Sch\ operator is $V(x)=x^2$.
The control system \r{1Dosc} has been studied, among others,
by Mirrahimi and Rouchon who proved its non-controllability 
in the case where $W$ is the identity function (see \cite{mira_rouch}).

As a consequence of Theorem~\ref{the-unbounded}, the spectrum of $-\Delta +V$ is discrete.
Its explicit expression  is 
$$\left \{\lambda_k={2k+1}\mid k\geq 0 \right \},$$
and therefore $\lambda_{k+1}-\lambda_k$ are  $\Q$-linearly dependent.
  Each $\lambda_k$ is a simple eigenvalue 
  whose corresponding eigenfunction is 
\be\label{e-funx}
\phi_k(x)= \frac{1}{\sqrt{k! 2^k \sqrt{\pi}}} e^{-\frac{x^2}{2}} H_k(x)
\ee
where $H_k(x)=(-1)^k e^{x^2}\frac{d^k}{dx^k} e^{-x^2}$ is the $k^{\mathrm{th}}$ Hermite polynomial.

In order to apply Corollary~\ref{complete} we would like first of all to ensure that the elements
\be\label{deri}
b_{kk}=\frac{(-1)^k}{{k! 2^k \sqrt{\pi}}}\int_\R W(x)H_k(x)\frac{d^k}{dx^k} e^{-x^2}d x,\ \ \ \ \ \ k\geq 0,
\ee
are $\Q$-linearly independent. Notice that for $W(x)=x$ (i.e., the non-controllable case pointed out by Mirrahimi and Rouchon), since each function $\phi_k^2$ is even, $b_{kk}=\int W\phi_k^2=0$. 

The existence of
controlled potentials $W$ for which  the elements of $(b_{kk})_{k\in\N}$
are  $\Q$-linearly independent can be easily inferred from the linear independence of the functions $\phi_k^2$.
The proposition below provides some explicit $W$
with such a property (and such that the corresponding \Sch\ equation is controllable).  
The potentials $W$ will be chosen in $L^\infty(\R)$ and therefore, as already remarked in Section~\ref{bible}, the corresponding solutions in the sense \r{solu} coincide with mild or strong solutions, depending on the regularity of the initial condition.  

\begin{prop}
(1) If $W$ is even, then system  \r{1Dosc} is not approximately controllable. 
(2) If $W$ has the 
form $W:x \mapsto e^{a x^2 + b x +c}$, with $a,b,c\in\R$ such that $a<0$ and the two numbers $\sqrt{1-a}$ and $b$ are algebraically independent, then system  \r{1Dosc} is approximately controllable, provided that $U$ has nonempty interior.
\end{prop}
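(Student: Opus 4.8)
The plan is to apply Corollary~\ref{complete} with $V(x)=x^2$ on $\Omega=\R$, after checking that its hypotheses hold. Part (1) is immediate: if $W$ is even, then each product $\phi_j\phi_k$ with $j+k$ odd is odd, so $b_{jk}=\int_\R W\phi_j\phi_k\,dx=0$ whenever $j+k$ is odd. In particular the infinite matrix $(b_{lm})_{l,m\in\N}$ decouples into the block of even-indexed and the block of odd-indexed eigenfunctions, so by the counterpart of Remark~\ref{connectedness} the span of $\{\phi_k : k \text{ even}\}$ is a proper invariant subspace with nontrivial invariant orthogonal complement, and the system is not approximately controllable. (One must be slightly careful about indexing conventions — here $k\geq 0$, and ``even'' refers to parity of $k$, equivalently parity of the Hermite polynomial $H_k$ — but this is routine.)

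For part (2), I would verify the two hypotheses of Corollary~\ref{complete} for $W(x)=e^{ax^2+bx+c}$ with $a<0$: (a) the diagonal terms $b_{kk}=\int_\R W(x)\phi_k(x)^2\,dx$ are $\Q$-linearly independent, and (b) $B^{(n)}$ is frequently connected. The eigenvalues of $-\Delta+x^2$ are simple, so Theorem~\ref{the-perturbation} applies (note $W\in L^\infty(\R)$, and $V=x^2\to+\infty$, $\inf V>-\infty$, so hypothesis (ii) is met). The key computation is a closed-form evaluation of $b_{jk}=\int_\R e^{ax^2+bx+c}\phi_j(x)\phi_k(x)\,dx$. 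Using the generating function for Hermite polynomials, or equivalently the Mehler-type formula $\sum_{j,k}\frac{s^j t^k}{\sqrt{j!k!2^{j+k}}}\int_\R e^{-x^2}H_j(x)H_k(x)e^{ax^2+bx+c}\,dx$, one gets an explicit generating function for the $b_{jk}$. The diagonal generating function $\sum_k b_{kk}\,t^k$ will come out to be of the form $\frac{C}{\sqrt{1-a}}\,\frac{1}{\sqrt{1-\alpha t^2}}\exp\!\big(\text{quadratic in }t/(1-\alpha t^2)\big)$ for suitable constants depending on $a,b,c$; the exponents $\sqrt{1-a}$ (from normalization of the Gaussian integral) and $b$ enter the coefficients. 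Expanding, $b_{kk}$ is (a constant times) a polynomial expression in $\rho:=1/\sqrt{1-a}$ and in $b$.

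The $\Q$-linear independence of $(b_{kk})_{k\geq 0}$ then follows from a Vandermonde-type / analytic argument: a nontrivial rational relation $\sum_{k=0}^N q_k b_{kk}=0$ would force the entire partial generating series $\sum_{k=0}^N q_k t^k$ to vanish when paired against the generating function — more precisely, it yields a polynomial identity in $\rho$ and $b$ with rational coefficients, and algebraic independence of $\sqrt{1-a}$ and $b$ (equivalently of $\rho$ and $b$, since $\rho$ and $\sqrt{1-a}$ are each algebraic over the other) forces all $q_k=0$. For hypothesis (b), connectedness of $B^{(n)}$ for $n$ large: the off-diagonal $b_{j,j+1}$ are given by the analogous generating-function coefficients and are visibly nonzero for a Gaussian-times-exponential weight (they reduce to integrals of $e^{-x^2+\text{linear}}$ against $H_jH_{j+1}$, which are nonzero because such a weight has no parity obstruction and the relevant determinant is a nonvanishing analytic function of the parameters), so in fact $b_{j,j+1}\neq 0$ for all $j$, which is stronger than frequent connectedness. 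Then Corollary~\ref{complete} gives approximate controllability for any $\tilde U$ with nonempty interior, hence in particular for the given $U$.

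The main obstacle is the explicit computation and bookkeeping of the generating function for $b_{jk}$ — in particular tracking exactly how $\sqrt{1-a}$ and $b$ appear — and then converting ``algebraically independent'' into the statement that the only rational relation among the $b_{kk}$ is trivial. The subtlety is that $b_{kk}$ is not literally a polynomial in $(\sqrt{1-a},b)$ but in $(1/\sqrt{1-a},b)$ with possibly an overall factor $e^c/\sqrt{1-a}$ and factors of $e^{b^2/\cdots}$; one must arrange the argument so that these common transcendental factors cancel out of any homogeneous linear relation, leaving a genuine polynomial identity in two algebraically independent quantities. Once that reduction is clean, the rest is the analyticity argument of Proposition~\ref{pro-pro} combined with a Vandermonde observation, and the controllability conclusion is a direct citation of Corollary~\ref{complete}.
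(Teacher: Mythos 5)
Your overall strategy matches the paper's: both parts reduce to the parity argument for (1) and to Corollary~\ref{complete} for (2), with the two hypotheses to check being $\Q$-linear independence of $(b_{kk})_{k\geq 0}$ and connectedness. Part (1) is essentially identical to the paper and correct. For part (2), your computational route differs from the paper's: you propose a Mehler-type generating-function identity, whereas the paper proceeds by writing $b_{jk}=\sigma_j\sigma_k\int e^{(a-1)x^2+bx+c}P_{j,k}(x)\,dx$ via $j$ integrations by parts, then normalizes $c=b^2/(4(a-1))$ (permissible, since multiplying $W$ by a positive constant only rescales $U$ and cannot affect approximate controllability) and applies the change of variables $y=\sqrt{1-a}\,x+b/(2\sqrt{1-a})$, yielding $b_{jk}=\frac{\sigma_j\sigma_k\sqrt\pi}{\sqrt{1-a}}S_{j,k}(b)$ with $S_{j,k}$ a polynomial over $\Q(\sqrt{1-a})$ of degree exactly $j+k$ with explicit nonzero leading coefficient. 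Your ``common transcendental factors cancel out'' worry is real, and the paper's normalization of $c$ is precisely what dissolves it; with it there is no leftover $e^{b^2/\cdots}$ or $e^c$ factor. The Vandermonde observation you invoke (strictly increasing degrees $2k$ plus transcendence of $b$ over $\Q(\sqrt{1-a})$) is then exactly the argument that finishes the $\Q$-linear independence, so that piece of your plan is sound modulo the computation.

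The genuine gap is in the connectedness step. You assert $b_{j,j+1}\neq 0$ ``because such a weight has no parity obstruction and the relevant determinant is a nonvanishing analytic function of the parameters.'' The absence of a parity obstruction plus non-identical-vanishing in $(a,b,c)$ only shows that the zero set of $b_{j,j+1}$ is a proper analytic subvariety; it does \emph{not} show that the specific $(a,b,c)$ in the hypothesis avoids it. In particular, your argument nowhere uses the standing assumption that $\sqrt{1-a}$ and $b$ are algebraically independent, so it cannot possibly yield a pointwise nonvanishing conclusion. The claim $b_{j,j+1}\neq 0$ is in fact correct, but it must be obtained the same way as the diagonal entries: $S_{j,j+1}(b)$ is a polynomial of exact degree $2j+1$ over $\Q(\sqrt{1-a})$ (the $b^{j+k}$-coefficient lives in the $y^0$-term, which always survives the Gaussian moment integral regardless of the parity of $j+k$), hence nonzero because $b$ is transcendental over $\Q(\sqrt{1-a})$. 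The paper actually sidesteps the odd-$j+k$ case entirely: it shows $b_{jk}\neq 0$ when $j,k$ have the same parity and then computes $b_{01}=\frac{b}{\sqrt2\,(1-a)^{3/2}}\neq 0$ explicitly, chaining any pair $j,k$ of opposite parity through $\{0,1\}$ to conclude that $B^{(n)}$ is connected. Either fix works, but some transcendence argument is indispensable; ``nonvanishing analytic function'' alone is not a proof.
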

\begin{proof}
 Since each function $\phi_k$ has the same parity as the integer $k$, then $\phi_k \phi_j$ has the same parity as the integer $j+k$. If $W$ is even, then 
 \r{def-bjk} shows that 
 for every $(j,k)$ such that $j+k$ is odd, $b_{jk}=0$. Applying Remark~\ref{connectedness}, one sees that the spaces spanned by the sets $\{\phi_k\mid k \mbox{ even}\}$ and $\{\phi_k\mid k \mbox{ odd}\}$ are invariant by the dynamics of system \r{1Dosc}. In particular, there is no way to steer system \r{1Dosc} from $\phi_1$ to a point $\eps$-close to $\phi_2$ if $\eps$ is smaller than $\sqrt{2}$. This proves (1).

In order to prove (2) let us apply Corollary~\ref{complete} (with 
$U$ playing the role of $\tilde U$ and $\R$ the role of $U$).
Let $W$ have the special form $W:x \mapsto e^{a x^2 + b x +c}$. Up to a multiplication of $W$ by the strictly positive real number 
$e^{\frac{b^2}{4(a-1)} - {c}}$,  we may assume without loss of generality that 
\be\label{nrmy}
c=\frac{b^2}{4(a-1)}.
\ee

Using the specific expression \r{e-funx} of $\phi_k$ in the definition
of $b_{jk}$ we can write 
$$b_{jk}= (-1)^j \sigma_j \sigma_k \int_{\R} e^{a x^2 + b x +c} H_k(x) \frac{ d^j}{dx^j} e^{-x^2} dx,$$ 
with $\sigma_l=1/\sqrt{l!2^l\sqrt{\pi}}$, $l=k,j$. Notice that  
$H_k$ is a polynomial with rational coefficients and of degree $k$, whose leading coefficient is equal to $2^k$.
Integrating by parts $j$ times, we get 
$$b_{jk}=  \sigma_j \sigma_k \int_{\R} e^{(a-1) x^2 + b x +c}P_{j,k}(x) dx$$
where $P_{j,k}$ is a polynomial of degree $j+k$.
Define $(g_m^{j,k})_{m=0}^{j+k}$ through
$$P_{j,k}(x)=\sum_{m=0}^{j+k}g_m^{j,k} x^m.$$ 
Each $g_m^{j,k}$ can be seen as the evaluation 
at $b$ of a polynomial $G_m^{j,k}$ with coefficients in $\Q[a]$ whose degree is less than or equal to 
$j$. If $m\in\{k,k+1,\dots,k+j\}$ then $G_m^{j,k}$ has exactly degree $j+k-m$ and the coefficient  
corresponding to the monomial of order $j+k-m$ is $2^{m}a^{m-k}$.

The renormalization of $c$ performed above is such that
$ (a-1) x^2 + bx +c  = (a-1) \left ( x+ \frac{b}{2\sqrt{1-a}} \right )^2$. Hence, the change of variables $y=\sqrt{1-a} \left ( x + \frac{b}{2\sqrt{1-a}} \right )$ yields
$$b_{jk}= \frac{\sigma_j \sigma_k}{\sqrt{1-a}}  \int_{\R} e^{-y^2}\,P_{j,k}\!\lp \frac{y}{\sqrt{1-a}}-\frac{b}{ 2(a-1)}\rp dy.$$ 

Due to the remarks made above on the coefficients of $P_{j,k}$, we have 
\br
P_{j,k}\!\lp \frac{y}{\sqrt{1-a}}-\frac{b}{ 2(a-1)}\rp&=&\sum_{m=k}^{j+k}2^m a^{m-k}b^{j+k-m}\lp \frac{-b}{2 (a-1)} \rp^m +Q_{j,k}(b,y)\nonumber\\
&=&\frac{(-1)^k}{(a-1)^k}\frac{1-\lp \frac{a}{a-1}\rp^{j+1}}{1-\frac{a}{a-1}} b^{j+k}+Q_{j,k}(b,y)\label{not-here}
\er
where $Q_{j,k}$ is a polynomial with coefficients in $\Q(\sqrt{1-a})$ ($\supset \Q[a]$) and of degree smaller than $j+k$ in its first variable. Notice that the coefficient multiplying $b^{j+k}$ in \r{not-here} is different from zero.

For every $m\geq 0$ the integral $\int_{\R} e^{-y^2} y^m dy$
is equal to zero if $m$ is odd and to
$ \Gamma \left ( \frac{m+1}{2} \right )=\frac{m!}{2^m (\frac{m}{2} )!} \sqrt{\pi}$ if $m$ is even, where $\Gamma$ is the Euler gamma function.

Therefore, if $j+k$ is even,
$$b_{jk}= \frac{\sigma_j \sigma_k\sqrt\pi}{\sqrt{1-a}}  
S_{j,k}(b)$$ 
where $S_{j,k}$ is a polynomial with coefficients in $\Q(\sqrt{1-a})$ of degree exactly $j+k$.

Since $b$ is transcendental over $\Q(\sqrt{1-a})$ then $b_{jk}\ne 0$ as soon as $j$ and $k$ have the same parity. Moreover, the elements of the sequence $(\Lambda'_{k}(0))_{k\geq 0}=(b_{kk})_{k\geq 0}$ are  $\Q$-linearly independent. 

To conclude the proof let us check that each matrix $(b_{jk})_{j,k=0}^n$ is connected.
Fix $j,k\in\{0,\dots,n\}$. We should prove the existence of
a sequence $r_1,\dots,r_l\in\{0,\dots,n\}$ such that $b_{j r_1}b_{r_1 r_2}\cdots b_{r_{l-1}r_l}b_{r_l k}\ne 0$. If $j$ and $k$ have the same parity then we are done since $b_{jk}\ne 0$.  Otherwise, a simple computation and the normalization \r{nrmy} show that 
$$b_{01}=\frac{b}{\sqrt{2} (1-a)^{3/2}}\ne 0 $$
and we can conclude by taking $\{r_1,r_2\}=\{0,1\}$. 
\end{proof}

\subsection{3D potential well}
Consider the Schr\"{o}dinger equation
\be\label{3Dpw}
i \frac{\partial \psi}{\partial t}(t,x) = - \Delta \psi(t,x) + 
u(t) W(x) \psi(t,x),
\ee
where the wave function $\psi$ depends on the time  $t$ and on three 
space variables $x_1,x_2,x_3$ with $(x_1,x_2,x_3)\in (0,l_1)\times (0,l_2)\times (0,l_3)=\Omega$ and satisfies the Dirichlet boundary condition $\psi|_{\partial \Omega}=0$. 
Notice that the potential corresponding to the uncontrolled 
\Sch\ operator is $V(x)=0$.
For every $W$ measurable bounded,
solutions in the sense \r{solu} coincide with mild or strong solutions, depending on the regularity of the initial condition.

The spectrum of the \Sch\ operator is 
$$\left \{\lambda_{k_1,k_2,k_3}=\pi^2\lp \frac{k_1^2}{l_1^2}+\frac{k_2^2}{l_2^2}+\frac{k_3^2}{l_3^2}\rp \mid k_1,k_2,k_3\geq 1 \right \}.$$
For the sake of simplicity, assume that $(l_1 l_2)^2$, $(l_1 l_3)^2$, and $(l_2 l_3)^2$ are $\Q$-linearly independent, so that all the eigenvalues are simple and the perturbation result appearing in Theorem~\ref{the-perturbation}
can be applied. (The case of multiple eigenvalues can be treated similarly, applying 
a refined perturbation argument as the one used in \cite{Albert}.)

The normalized eigenfunction corresponding to 
$\lambda_{k_1,k_2,k_3}$ is given, up to sign, by
$$\phi_{k_1,k_2,k_3}(x_1,x_2,x_3)=\frac{2^{\frac32}}{\sqrt{l_1 l_2 l_3}}\sin\lp \frac{k_1 x_1 \pi}{l_1}\rp\sin\lp \frac{k_2 x_2 \pi}{l_2}\rp\sin\lp \frac{k_3x_3 \pi}{l_3}\rp.$$

\begin{prop}\label{p-box}
Let $(l_1 l_2)^2$, $(l_1 l_3)^2$, and $(l_2 l_3)^2$ be $\Q$-linearly independent and define $W(x_1,x_2,x_3)=e^{\al_1 x_1+\al_2 x_2+\al_3 x_3}$  with $\al_1,\al_2,\al_3\in\R$. Assume that $\al_1,\al_2,\al_3$ are nonzero and that 
$(\pi/\al_1 l_1)^2,$ $(\pi/\al_2 l_2)^2,(\pi/\al_3 l_3)^2$ are algebraically independent. Then 
 the control system \r{3Dpw} is approximately controllable.    
\end{prop}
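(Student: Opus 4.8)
The plan is to deduce the statement from Corollary~\ref{complete}, applied with $U=\R$, $V\equiv0$ and $W(x_1,x_2,x_3)=e^{\alpha_1 x_1+\alpha_2 x_2+\alpha_3 x_3}$. Since $\Omega$ is bounded, $W\in L^\infty(\Omega)$, so hypothesis~(i) of Theorem~\ref{the-perturbation} is satisfied, and I only have to verify that the eigenvalues of $-\Delta+V=-\Delta$ are simple, that every matrix $B^{(n)}$ is connected, and that the sequence $(b_{kk})$ is $\Q$-linearly independent. Simplicity is immediate: if $\lambda_{k_1,k_2,k_3}=\lambda_{m_1,m_2,m_3}$, then multiplying by $l_1^2 l_2^2 l_3^2/\pi^2$ gives $(k_1^2-m_1^2)(l_2 l_3)^2+(k_2^2-m_2^2)(l_1 l_3)^2+(k_3^2-m_3^2)(l_1 l_2)^2=0$, and the $\Q$-linear independence of $(l_1 l_2)^2,(l_1 l_3)^2,(l_2 l_3)^2$ forces $k_i=m_i$ for $i=1,2,3$.

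The heart of the argument is the tensor structure: $W$ and every eigenfunction $\phi_{k_1,k_2,k_3}$ factor as products over the three coordinates, so that for multi-indices $k=(k_1,k_2,k_3)$ and $m=(m_1,m_2,m_3)$ one has $b_{k,m}=\prod_{i=1}^3 I_i(k_i,m_i)$, where $I_i(p,q)=\frac{2}{l_i}\int_0^{l_i}e^{\alpha_i x}\sin(p\pi x/l_i)\sin(q\pi x/l_i)\,dx$. Rewriting the product of sines as a difference of cosines and using $\int_0^{l_i}e^{\alpha_i x}\cos(n\pi x/l_i)\,dx=\alpha_i((-1)^n e^{\alpha_i l_i}-1)/(\alpha_i^2+n^2\pi^2/l_i^2)$, a direct computation yields
$$I_i(p,q)=\frac{\alpha_i((-1)^{p+q}e^{\alpha_i l_i}-1)}{l_i}\cdot\frac{4pq\pi^2/l_i^2}{(\alpha_i^2+(p-q)^2\pi^2/l_i^2)(\alpha_i^2+(p+q)^2\pi^2/l_i^2)}.$$
Since $\alpha_i\neq0$ we have $(-1)^{p+q}e^{\alpha_i l_i}\neq1$, while $4pq\pi^2/l_i^2\neq0$ for $p,q\geq1$; hence $I_i(p,q)\neq0$ for all $p,q\geq1$, and therefore $b_{k,m}\neq0$ for every pair of multi-indices. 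In particular, for any ordering of the multi-indices, every finite section $B^{(n)}$ has all entries nonzero and is thus connected, so the frequent-connectedness hypothesis holds.

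It remains to show that $(b_{kk})$ is $\Q$-linearly independent. Setting $p=q=k_i$ and simplifying, $I_i(k_i,k_i)=\frac{e^{\alpha_i l_i}-1}{\alpha_i l_i}\,g_{k_i}(t_i)$ with $t_i=(\pi/(\alpha_i l_i))^2>0$ and $g_k(X):=1-\frac{1}{1+4Xk^2}$, so that $b_{kk}=C\,g_{k_1}(t_1)g_{k_2}(t_2)g_{k_3}(t_3)$ with $C=\prod_{i=1}^3\frac{e^{\alpha_i l_i}-1}{\alpha_i l_i}\neq0$. Suppose $\sum_{k\in F}q_k\,b_{kk}=0$ for a finite set $F$ of multi-indices and rationals $q_k$: this means that the rational function $R(X_1,X_2,X_3)=\sum_{k\in F}q_k\,g_{k_1}(X_1)g_{k_2}(X_2)g_{k_3}(X_3)\in\Q(X_1,X_2,X_3)$ vanishes at $(t_1,t_2,t_3)$, so the algebraic independence of $t_1,t_2,t_3$ over $\Q$ forces $R\equiv0$. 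But in one variable the functions $g_k$, $k\geq1$, have pairwise distinct poles and are therefore $\Q$-linearly independent in $\Q(X)$; since $X_1,X_2,X_3$ are algebraically independent, $\Q(X_1),\Q(X_2),\Q(X_3)$ are linearly disjoint over $\Q$ inside $\Q(X_1,X_2,X_3)$, so the products $g_{k_1}(X_1)g_{k_2}(X_2)g_{k_3}(X_3)$, $k_i\geq1$, are $\Q$-linearly independent. Hence $R\equiv0$ implies $q_k=0$ for all $k\in F$, so $(b_{kk})$ is $\Q$-linearly independent, and Corollary~\ref{complete} yields approximate controllability of \r{3Dpw}.

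The computations of the second step are routine; the only genuinely delicate point is the last one, where the $\Q$-linear-independence statement for the real numbers $b_{kk}$ is transported to one about rational functions — legitimate precisely because $t_1,t_2,t_3$ are algebraically independent — and then reduced to the elementary linear independence of products of one-variable partial fractions. It is worth stressing from the start that the unperturbed spectrum $\{\lambda_{k_1,k_2,k_3}\}$ is itself $\Q$-linearly dependent (e.g.\ $\lambda_{2,2,1}=\lambda_{2,1,1}+\lambda_{1,2,1}-\lambda_{1,1,1}$), which is exactly why one must pass through Corollary~\ref{complete} rather than invoke Theorem~\ref{main-Sch} directly.
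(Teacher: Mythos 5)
Your proof is correct and follows essentially the same route as the paper: pass through Corollary~\ref{complete}, compute $b_{kk}=\Lambda_k'(0)$ as a product of one-variable factors indexed by $k_1,k_2,k_3$, obtain $\Q$-linear independence of the diagonal sequence from the algebraic independence of $(\pi/\alpha_i l_i)^2$, and verify that all entries $b_{k,m}$ are nonzero so every $B^{(n)}$ is connected. The only cosmetic difference is in how the key linear-independence step is packaged: the paper isolates Lemma~\ref{vand} on $\F$-linear independence of $\bigl(\tfrac{1}{1+q\beta}\bigr)_{q\in\Q}$ for $\beta$ transcendental over $\F$ and applies it three times in a nested way, whereas you lift the relation to $\Q(X_1,X_2,X_3)$ via algebraic independence of the $t_i$ and then invoke linear disjointness of $\Q(X_1),\Q(X_2),\Q(X_3)$ over $\Q$ together with the distinct-poles observation — the two arguments are really the same clear-denominators-and-compare-coefficients computation in slightly different dress.
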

Before starting the proof of Proposition~\ref{p-box} let us show the following technical result.
\begin{lem}\label{vand}
Let $\beta$ be a real number transcendental over a field $\F$ with $\Q\subset\F\subset\R$. Then the elements of the family $\lp \frac1{1+q \beta}\rp_{q\in\Q}$ are $\F$-linearly independent. 
\end{lem}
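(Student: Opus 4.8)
The plan is to exploit the transcendence of $\beta$ to turn an alleged $\F$-linear dependence into an identity between polynomials, and then read off the coefficients by specializing at the poles. Concretely, suppose by contradiction that there are pairwise distinct $q_1,\dots,q_n\in\Q$ and $c_1,\dots,c_n\in\F$, not all zero, with $\sum_{i=1}^n c_i/(1+q_i\beta)=0$. Since $\Q\subset\F$ and $\beta$ is transcendental over $\F$, the number $\beta$ is irrational, so $1+q_i\beta\neq 0$ for every $i$; multiplying the relation by $\prod_{i=1}^n(1+q_i\beta)$ gives $\sum_{i=1}^n c_i\prod_{j\neq i}(1+q_j\beta)=0$.

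Next I would introduce the polynomial $P(X)=\sum_{i=1}^n c_i\prod_{j\neq i}(1+q_jX)\in\F[X]$. The previous line says $P(\beta)=0$, and transcendence of $\beta$ over $\F$ forces $P$ to be the zero polynomial in $\F[X]$. Now I would kill the coefficients one at a time. For an index $k$ with $q_k\neq0$, evaluate $P$ at $X=-1/q_k$: every summand with $i\neq k$ contains the factor $(1+q_kX)$, which vanishes there, so $0=P(-1/q_k)=c_k\prod_{j\neq k}(1-q_j/q_k)$; since the $q_j$ are pairwise distinct, each factor $1-q_j/q_k$ is nonzero, hence $c_k=0$. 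There is at most one index $k$ with $q_k=0$; for it, $P$ has now collapsed to $c_k\prod_{j\neq k}(1+q_jX)$, a polynomial all of whose factors are nonzero, so $P\equiv0$ in the integral domain $\F[X]$ forces $c_k=0$ as well. This contradicts the choice of the $c_i$.

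This argument is essentially elementary; the only points requiring a moment's care are the non-vanishing of the denominators $1+q_i\beta$ (which uses $\beta\notin\Q$) and the passage from the numerical identity $P(\beta)=0$ to the polynomial identity $P\equiv 0$ (which uses transcendence of $\beta$ over $\F$, not merely over $\Q$). No genuine obstacle arises: the partial-fraction intuition that the rational functions $1/(1+qX)$ have pairwise distinct poles as $q$ ranges over $\Q$ is exactly what the specialization at $X=-1/q_k$ makes rigorous, and the reason $\F$ rather than just $\Q$ may serve as coefficient field is that this pole structure is insensitive to the scalars.
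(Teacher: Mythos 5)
Your proof is correct, and it finishes differently from the paper's. Both arguments begin identically: multiply the alleged relation $\sum_i c_i/(1+q_i\beta)=0$ by $\prod_i(1+q_i\beta)$, and invoke transcendence of $\beta$ over $\F$ to upgrade the resulting numerical equality to an identity of polynomials in $\F[X]$. Where you diverge is in extracting the vanishing of the coefficients $c_i$. The paper expands the polynomial identity in the monomial basis $1,\beta,\dots,\beta^{N-1}$, writes the resulting system as $(f_1,\dots,f_N)S_N=0$ where the entries of $S_N$ are elementary symmetric functions of the $q_j$ with $q_k$ omitted, and computes $\det(S_N)=\prod_{j<k}(q_k-q_j)$, a Vandermonde-type determinant that is nonzero because the $q_i$ are distinct. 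You instead evaluate the polynomial $P(X)=\sum_i c_i\prod_{j\ne i}(1+q_jX)$ at the pole $X=-1/q_k$; each such specialization annihilates every summand except the $k$-th, and the surviving factor $\prod_{j\ne k}(1-q_j/q_k)$ is nonzero for the same reason the Vandermonde determinant is. Your route avoids any determinant computation and makes the partial-fraction mechanism transparent; it also handles the $q=0$ term explicitly, whereas the paper quietly restricts to $q\in\Q\setminus\{0\}$ (which suffices for their application, and their determinant computation would in fact survive a zero $q_k$ anyway). The paper's route, by packaging the conclusion as the nonvanishing of an explicit determinant, is perhaps more readily generalized to other bases of rational functions. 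Both are sound; yours is arguably the more economical.
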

\begin{proof}
Fix $N\in\N$ and $N$ distinct numbers $q_1,\dots,q_N\in \Q\setminus \{0\}$. Assume that for some $f_1,\ldots f_N$ in $\F$
\be\label{zum}
\sum_{k=1}^N f_k \frac1{1+q_k \beta}=0.
\ee
We have to prove that $f_1=f_2=\cdots=f_N=0$.
Multiplying \r{zum} by $\Pi_{k=1}^N (1+q_k\beta)$ we get
\be\label{zzum}
\sum_{k=1}^N f_k\lp \sum_{r=0}^{N-1} s_{k,r} \beta^r\rp=0
\ee
where
$s_{k,0}=1$ and, for $r\geq1$,
$$s_{k,r}=\sum_{{\scriptsize \ba{c}1\leq j_1<j_2<\cdots< j_r\leq N\\
 j_1,\dots,j_r\ne k\ea}}q_{j_1} q_{j_2} \cdots q_{j_r}.
$$
By hypothesis, all coefficients of the left-hand side of \r{zzum}, seen as a polynomial in $\beta$, are equal to zero. Hence, $(f_1,\dots,f_N) S_N=(0,\dots,0)$ where
$$S_N=\lp\ba{ccc}
s_{1,0}&\cdots&s_{1,N-1}\\
\vdots&&\vdots\\
s_{N,0}&\cdots&s_{N,N-1}
\ea\rp.$$
A computation shows that 
$\mathrm{det}(S_N)= \Pi_{1\leq j <k\leq N} (q_k-q_j)$. 
Hence,  $S_N$ is invertible and therefore $f_1=f_2=\cdots=f_N=0$. 
\end{proof}

\noindent {\it Proof of Proposition~\ref{p-box}}.
Theorem~\ref{the-perturbation} and Fubini's theorem
imply that the eigenvalues $\Lambda_{k_1,k_2,k_3}(\mu)$ 
of $-\Delta+\mu W$ on $\Omega$ 
for the Dirichlet boundary value problem 
satisfy
\brs\label{deri3D}
\Lambda'_{k_1,k_2,k_3}(0)&=&
\frac{64 (e^{\al_1 l_1}-1)(e^{\al_2 l_2}-1)(e^{\al_3 l_3}-1) {k_1}^2 {k_2}^2 {k_3}^2 \pi^6}
{{\al_1 l_1}{\al_2 l_2}{\al_3 l_3} 
(4    \pi ^2 k_1^2+\al_1^2 l_1^2)(4    \pi ^2 k_2^2+\al_2^2 l_2^2)(4    \pi ^2 k_3^2+\al_3^2 l_3^2)}\nonumber\\
&=&C k_1^2 k_2^2 k_3^2
\frac{1}{\lp \frac{4    \pi^2 }{\al_1^2 l_1^2} k_1^2+1\rp\lp \frac{4    \pi^2 }{\al_2^2 l_2^2} k_2^2+1\rp\lp \frac{4    \pi^2 }{\al_3^2 l_3^2} k_3^2+1\rp},\label{machin}
\ers
where
$$C=\frac{64 (e^{\al_1 l_1}-1)(e^{\al_2 l_2}-1)(e^{\al_3 l_3}-1)  \pi^6}
{({\al_1 l_1}{\al_2 l_2}{\al_3 l_3})^3 
}. $$
Let $\beta_j=4    \pi^2 /(\al_j^2 l_j^2)$, $j=1,2,3$. The $\Q$-linear independence of 
the elements of 
$({\Lambda'_{k_1,k_2,k_3}}(0))_{k_1,k_2,k_3\in\N}$ 
is obtained from the expression above 
thanks to three nested applications of Lemma~\ref{vand} 
with $\F=\Q(\beta_1,\beta_2)$ and $\beta=\beta_3$, $\F=\Q(\beta_1)$ and $\beta=\beta_2$, and $\F=\Q$ and $\beta=\beta_1$.
In order to complete the proof, let us check that every matrix $B^{(n)}$ is connected. 
(The conclusion then follows from Corollary~\ref{complete}.) 
A straightforward computation shows that for every
triples of positive integers $(k_1,k_2,k_3)$ and $(h_1,h_2,h_3)$ the integral
$$\int_\Omega e^{\al_1 x_1+\al_2 x_2+\al_3 x_3}\phi_{k_1,k_2,k_3}(x_1,x_2,x_3)\phi_{h_1,h_2,h_3}(x_1,x_2,x_3)dx_1 dx_2 dx_3$$
is different from zero, i.e., every element of $B^{(n)}$ is nonzero.
\hfill$\Box$\\

{\bf Acknowledgments.} The authors are grateful to Andrei Agrachev for inspiring this work and to George Weiss,  Marius Tucsnak,  Riccardo Adami,  Anne de Roton, Tak\'eo Takahashi for helpful discussions.

{\small
\bibliographystyle{abbrv}
\bibliography{biblio}
}
	 
\end{document}